\theoremstyle{theorem}
\newtheorem{theorem}{Theorem}[section]
\newtheorem{lemma}[theorem]{Lemma}
\newtheorem{proposition}[theorem]{Proposition}
\newtheorem{corollary}[theorem]{Corollary}
\newtheorem{theoremletter}{Theorem}
\theoremstyle{remark}
\newtheorem{example}{Example}[section]
\newtheorem{remark}[theorem]{Remark}
\numberwithin{equation}{section}
\theoremstyle{definition}
\newtheorem{definition}{Definition}[section]
\DeclareMathOperator{\diver}{div}
\DeclareMathOperator{\Ric}{Ric}
\DeclareMathOperator{\Hess}{Hess}
\DeclareMathOperator{\trace}{tr}
\DeclareMathOperator{\vol}{Vol}
\newcommand{\R}{\mathbb{R}}
\newcommand{\eps}{\varepsilon}
\newcommand{\tcb}{\textcolor{blue}}
\newcommand{\di}{\mathrm{d}}
\newcommand{\disp}{\displaystyle}
\newcommand{\Mo}{\mathring{M}}
\newcommand{\seg}{{\rm seg}}
\renewcommand{\div}{\diver}
\renewcommand{\phi}{\varphi}
\renewcommand{\emptyset}{\varnothing}
\newcommand*\owedge{\mathpalette\@owedge\relax}
\newcommand*\@owedge[1]{
	\mathbin{
		\ooalign{
			$#1\m@th\bigcirc$\cr
			\hidewidth$#1\m@th\wedge$\hidewidth\cr
		}
	}
}
\title[Splitting and rigidity of sub-static spaces]{Some splitting and rigidity results for sub-static spaces}
\author{Giulio Colombo}
\address{Dipartimento di Matematica ``F. Enriques", Universit\`a degli Studi di Milano, Via Saldini 50, I-20133 Milano, Italy}
\email{giulio.colombo@unimi.it}
\author{Allan Freitas}
\address{Departamento de Matem\'{a}tica, Universidade Federal da Para\'{\i}ba, 58.051-900 Jo\~{a}o Pessoa, Para\'{\i}ba, Brazil}
\email{allan@mat.ufpb.br/allan.freitas@academico.ufpb.br}
\author{Luciano Mari}
\address{Dipartimento di Matematica ``F. Enriques", Universit\`a degli Studi di Milano, Via Saldini 50, I-20133 Milano, Italy}
\email{luciano.mari@unimi.it}
\author{Marco Rigoli}
\address{Dipartimento di Matematica ``F. Enriques", Universit\`a degli Studi di Milano, Via Saldini 50, I-20133 Milano, Italy}
\email{marco.rigoli@unimi.it}
\begin{document}

\maketitle
%
%
\noindent \textbf{MSC 2020} {
	Primary: 53C21, 
	53C24, 
	53C42, 
	Secondary: 53C25, 
	53C43, 
	83C20. 
}

\noindent \textbf{Keywords} {
	Sub-static $\cdot$
	Rigidity $\cdot$
	Stable Minimal $\cdot$
	Wave maps 
	}

\begin{abstract}
In this paper we study the rigidity problem for sub-static systems with possibly non-empty boundary. First, we get local and global splitting theorems by assuming the existence of suitable compact minimal hypersurfaces, complementing recent results in the literature. Next, we prove some boundary integral inequalities that extend works by Chru\'sciel and Boucher-Gibbons-Horowitz to non-vacuum spaces. Even in the vacuum static case, the inequalities improve on known ones. Lastly, we consider the system arising from static solutions to the Einstein field equations coupled with a $\sigma$-model. The Liouville theorem we obtain allows for positively curved target manifolds, generalizing a result by Reiris.
\end{abstract}

\section{Introduction}
The purpose of this paper is to study some rigidity problems for static solutions to the Einstein field equations
\begin{equation}\label{einsteq}
	\Ric_{\hat{g}}+\left(\Lambda-\frac{1}{2}S_{\hat{g}}\right)\hat{g}=T.    
	\end{equation}
Here, $(\hat{M}^{m+1},\hat{g})$ is a Lorentzian manifold of dimension $m+1\geq 4$ with Ricci and scalar curvature, respectively, $\Ric_{\hat{g}}$ and $S_{\hat{g}}$; $T$ is the stress-energy tensor, which accounts for the distribution of matter, energy, and momentum in the manifold, and $\Lambda$ is the cosmological constant.
%
Looking for static solutions, that is, solutions possessing a timelike Killing field whose orthogonal distribution is integrable, leads to study the following warped product manifolds:
\begin{equation}\label{static_model}
\hat{M} = \mathbb{R} \times M, \qquad \hat{g} = - u^2 \, \di t \otimes \di t + g,
\end{equation}
where $(M^m,g)$ is a Riemannian manifold and $0 < u \in C^\infty(M)$. 

Physical reasons demand that $T$ satisfies the null energy condition (NEC), that is, $T$ is non-negative on null vectors. Condition (NEC) is satisfied by well-known  sources including electrostatic ones, scalar fields, and certain perfect-fluid models, and its geometric consequences are highlighted, for instance, by Penrose's Singularity Theorem \cite{penrose} (see also \cite[Page 263]{hawking}). Letting $\pi : (\hat M,\hat g) \to (M,g)$ be the projection onto the second factor, for each null vector $Y$ a computation gives
\begin{equation}\label{eq_Q}
T(Y,Y)=\left[\Ric -\frac{\Hess  u}{u}+\frac{\Delta u}{u}g\right](\pi_*Y,\pi_*Y),
\end{equation}
where \(\Ric \), \(\Hess \), \(\Delta \) are the Ricci curvature, the Hessian and the Laplacian on \((M^m, g)\), respectively. The (NEC) condition is therefore equivalent to
\begin{equation}\label{sub_static}
u \Ric  - \Hess  u + (\Delta  u) g \doteq uQ\geq 0,
\end{equation}

\begin{definition}
	A \emph{sub-static triple} $(M^m,g,u)$ is the data of a smooth, complete Riemannian manifold $(M^m,g)$ of dimension $m \ge 3$, possibly with boundary $\partial M$, and a solution $u \in C^\infty(M)$ to \eqref{sub_static} with $u>0$ in the interior $\mathring{M}$. If $\partial M\neq\emptyset$, we assume $u=0$ on $\partial M$. 
\end{definition}

\begin{remark}
	A priori, we do not assume that $Q$ can be extended continuously to $\partial M$ when this latter is non-empty. However, some result will need the condition, which is however meaningful due to \eqref{eq_Q} if we assume that $T$ is defined up to $\partial \hat M$.	
\end{remark}

In the above setting, $\partial M$ models the event horizon of a static black-hole. If $Q \equiv 0$, system \eqref{sub_static} describes static solutions to \eqref{einsteq} with $T \equiv 0$, named vacuum static spaces. A famous conjecture by Boucher, Gibbons, and Horowitz \cite{BGH} states:
\begin{quote}
\textit{(Cosmic No-Hair Conjecture) The only compact vacuum static triple \((M^m, g, u)\) with positive scalar curvature and connected boundary is given by a standard round hemisphere \(\mathbb{S}^m_{+}\), with static potential \(u\) being the height function.}
\end{quote}
In particular, the Cosmic No-Hair Conjecture suggests that, under certain conditions, the evolution of the universe leads to the dominance of a simple, highly symmetric geometry, ruling out more complicated ``hairy" solutions. Several works have established the conjecture under additional hypotheses. Reilly \cite[Theorem 4]{reilly} showed that the conjecture is true if $M$ is Einstein. Boucher, Gibbons, and Horowitz \cite{BGH} obtained a similar result, demonstrating the uniqueness of spacetime (the anti-de Sitter space) in the case of negative scalar curvature. Boucher \cite{bouch} and Friedrich \cite{fried} proved the result by assuming a Penrose compactification of spacetime, coupled with certain conditions at the conformal infinity. The same positive answer is obtained when $(M^{m},g)$ is conformally flat, as proved independently by Kobayashi \cite{koba} and Lafontaine \cite{laf}. Despite these developments, Gibbons, Hartnoll, and Pope \cite{ghp} constructed counterexamples to the Cosmic No-Hair Conjecture in dimensions $4 \leq m \leq 8$, and Costa, Di\'ogenes, Pinheiro and Ribeiro \cite{costa} constructed a simply-connected counterexample in any dimension $m \geq 4$. To our knowledge, the conjecture in its full generality is still open in dimension $m=3$.

It is natural to wonder how the behavior of \( \partial M = u^{-1}(0)\) can influence the rigidity of the entire manifold. If we assume that $Q$ extends continuously to $\partial M$, equation \eqref{sub_static} implies that $\partial M$ is totally geodesic, and thus \(|\nabla  u|\) is constant on each connected component of \(\partial M\). If \((M^m, g, u)\) is a compact vacuum static triple, then necessarily its scalar curvature $S$ is a positive constant which we can assume to be $m(m-1)$ by scaling. In this case, denoting by $\{\Sigma_i\}$, $1 \le i \le l$ the connected components of $\partial M$, Chru\'sciel \cite{Crus} showed that
\begin{equation}\label{crus}
\sum_{i=1}^l \kappa_i \int_{\Sigma_i} \left(S_{\Sigma_i} - (m-2)(m-1) \right) \geq 0,
\end{equation}
where $S_{\Sigma_i}$ is the scalar curvature of $\Sigma_i$ and \(\kappa_i\) is the restriction of \(|\nabla  u|\) to \(\Sigma_i\), named the surface gravity of $\Sigma_i$. Furthermore, equality implies that \(M\) is a round hemisphere and, consequently, that \(l = 1\), i.e., \(\Sigma\) is connected. Chrusciel's inequality is a key step to prove an important rigidity result in dimension \(m = 3\):
\begin{theorem}[Boucher, Gibbons, and Horowitz \cite{BGH}; Shen \cite{sh}]\label{bgh}
Let \((M^3, g, u)\) be a compact oriented vacuum static triple with connected boundary and scalar curvature $6$. Then the area of \(\partial M\) satisfies the inequality
\[
|\partial M| \leq 4\pi.
\]
Moreover, equality holds if and only if \((M^3, g)\) is isometric to the standard hemisphere.
\end{theorem}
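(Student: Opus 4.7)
The plan is to deduce the theorem as a nearly direct consequence of Chru\'sciel's inequality \eqref{crus} combined with the Gauss--Bonnet formula on the boundary. Since $m = 3$ and the scalar curvature is $6 = m(m-1)$, no rescaling is required and \eqref{crus} applies as stated. Denote $\Sigma = \partial M$ and let $\kappa = |\nabla u|_{|\Sigma}$ be its surface gravity; I first need $\kappa > 0$ in order to divide through. This follows from the Hopf boundary point lemma: tracing the vacuum equation $u \Ric = \Hess u - (\Delta u) g$ yields $\Delta u = -3u$, so $u$ is superharmonic in $\Mo$, strictly positive there, and vanishes on $\Sigma$, hence its outward normal derivative on $\Sigma$ is strictly negative and $\kappa > 0$.

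With $\partial M$ connected, Chru\'sciel's inequality reduces to
\[
\kappa \int_{\Sigma} \bigl( S_\Sigma - 2 \bigr) \geq 0.
\]
Dividing by $\kappa$ and using that on a surface $S_\Sigma = 2 K_\Sigma$ with $K_\Sigma$ the Gaussian curvature, the inequality becomes $\int_\Sigma K_\Sigma \geq |\Sigma|$. Since $\Sigma$ is a closed oriented surface, the Gauss--Bonnet formula $\int_\Sigma K_\Sigma = 2\pi \chi(\Sigma)$ then gives
\[
|\partial M| = |\Sigma| \leq 2\pi \chi(\Sigma).
\]
As $|\Sigma| > 0$, this forces $\chi(\Sigma) > 0$, and for a closed oriented surface this is only possible if $\chi(\Sigma) = 2$, i.e.\ $\Sigma \cong \sphere^2$. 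Substituting back yields $|\partial M| \leq 4\pi$, as claimed.

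For the rigidity statement, if $|\partial M| = 4 \pi$ then every inequality above must be an equality; in particular equality is attained in Chru\'sciel's inequality \eqref{crus}, and the rigidity clause recalled after \eqref{crus} then forces $(M^3, g)$ to be isometric to the round hemisphere $\sphere^3_+$ (on which indeed the boundary is a round $\sphere^2$ of area $4\pi$, confirming the other direction). Overall, the only non-routine input is \eqref{crus} itself together with its equality case; everything else is a two-line topological argument, so the main obstacle is really hidden in Chru\'sciel's inequality, which is taken as given here.
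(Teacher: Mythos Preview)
Your proof is correct and follows exactly the route the paper indicates: it explicitly presents Theorem~\ref{bgh} as a known result and says that ``Chru\'sciel's inequality is a key step to prove'' it, which is precisely what you do, combining \eqref{crus} with Gauss--Bonnet. The paper's own generalization in Corollary~\ref{coro_bgh} proceeds along the same lines (apply the integral inequality, use Gauss--Bonnet to force positive Euler characteristic, conclude the area bound, and invoke the rigidity clause for equality), so your argument is essentially the specialization of that proof to the vacuum case with connected boundary.
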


In the vacuum static case, this kind of inequalities were considered and extended in several directions. Among the results, we quote a gap phenomenon for the boundary area in \cite[Theorem C]{ambrozio}, as well as refined area bounds relating to the virtual mass in \cite[Theorems 1.4 and 1.5]{borghini}. For general sub-static triples, we prove a family of inequalities extending \eqref{crus}, depending on a parameter $b$. The inequalities are a consequence of some identities in the spirit of Robinson’s \cite{robinson} and Shen's \cite{sh}, which may have independent interest. We obtain the following extension of \eqref{crus}:

\begin{theoremletter}\label{thm_BGH}
Let \((M^m, g, u)\) be a compact sub-static triple such that $\partial M \neq \emptyset$. Assume that $Q$ extends in a $C^1$-way to $\partial M$ and that the scalar curvature $S$ of $M$ satisfies
	\[
	S  - \trace  Q \qquad \text{is constant on } \, M.
	\]
	Then, $S  - \trace  Q > 0$. Moreover, for any real parameter
	\[
		b \geq - \frac{1}{m-1},
	\]
	the following inequality holds:

	\begin{equation} \label{BGHtype_intro}
		\sum_{i=1}^{l} \kappa_i^b \int_{\Sigma_i} \left(S_{\Sigma_i} - \frac{m-2}{m}S  - \frac{2}{m} \trace  Q\right) \ge 0
	\end{equation}
	where \(\Sigma_1, \dots, \Sigma_l\) are the connected components of \(\partial M\), $S_{\Sigma_i}$ is the scalar curvature of $\Sigma_i$ and \(\kappa_i > 0\) is the surface gravity of $\Sigma_i$. Moreover, the inequality in \eqref{BGHtype_intro} is strict unless \(M\) is isometric to a round hemisphere and \(Q \equiv 0\) on \(M\).
\end{theoremletter}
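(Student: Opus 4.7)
My plan proceeds in three main steps. First, I establish the algebraic identity driving the argument. Tracing the sub-static equation $uQ = u\Ric - \Hess u + (\Delta u)g$ gives $\Delta u = u(\trace Q - S)/(m-1)$, which under the hypothesis that $c := S - \trace Q$ is constant simplifies to $\Delta u = -cu/(m-1)$. Substituting this back and isolating the traceless parts of both sides yields the pointwise identity
\[
\Hess u - \tfrac{\Delta u}{m}g \,=\, u\bigl(\mathring{\Ric} - \mathring{Q}\bigr),
\]
where $\mathring{T}$ denotes the traceless part of a symmetric $2$-tensor $T$. Because $Q$ extends continuously to $\partial M$, the sub-static equation evaluated on the boundary gives $\Hess u = (\Delta u)g$; tracing forces $\Delta u = 0$ on $\partial M$, so $\partial M$ is totally geodesic. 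The Gauss equation reduces to $S_{\Sigma_i} = S - 2\Ric(\nu,\nu)$ on each component $\Sigma_i$, and the target boundary integrand in \eqref{BGHtype_intro} rewrites as $-2[\Ric(\nu,\nu) - c/m]$.

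The heart of the argument is a Shen-type integral identity. Let $\nu$ be the outward unit normal to $\partial M$, so $\nabla u = -\kappa_i\nu$ on $\Sigma_i$. I consider the vector field
\[
W := |\nabla u|^{b-1}\Bigl[\Ric(\nabla u) - \tfrac{c}{m}\nabla u\Bigr],
\]
whose boundary pairing is $\langle W,\nu\rangle = -\kappa_i^b[\Ric(\nu,\nu) - c/m]$ on $\Sigma_i$, so that Stokes' theorem yields
\[
\tfrac{1}{2}\sum_{i=1}^{l}\kappa_i^b\int_{\Sigma_i}\Bigl[S_{\Sigma_i} - \tfrac{m-2}{m}S - \tfrac{2}{m}\trace Q\Bigr]d\sigma \,=\, \int_M \div(W)\,dV,
\]
and it remains to show $\int_M \div(W) \ge 0$. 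To compute $\div(W)$, three auxiliary identities enter: from the contracted second Bianchi identity applied to the sub-static equation, $\div(Q) = \tfrac{1}{2}\nabla S - Q(\nabla u)/u$ in the interior; from the constancy of $c$, $\div(T) = Q(\nabla u)/u$ with $T := \mathring{\Ric} - \mathring{Q}$; and from the algebraic identity, $\Hess u = uT - \tfrac{cu}{m(m-1)}g$, which in particular implies $T:\Hess u = u|T|^2$ (using tracelessness of $T$) and $uT(\nabla u) = \tfrac{1}{2}\nabla|\nabla u|^2 + \tfrac{cu}{m(m-1)}\nabla u$. Expanding $\div(W)$, the singular interior contribution $Q(\nabla u, \nabla u)/u$ produced by $\div(Q)$ is cancelled by a matching term from differentiating the $\tfrac{c}{m}|\nabla u|^{b-1}\nabla u$ piece, leaving an integrable expression.

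What remains after this cancellation regroups into $u|\nabla u|^{b-1}|T|^2 + (b-1)u|\nabla u|^{b-3}|T(\nabla u)|^2$, a bulk contribution linear in $Q$, and $c$-dependent terms that collapse into non-negative pieces after one further integration by parts of $\int_M \langle \nabla u, \nabla |\nabla u|^{b-1}\rangle$ (using $\Delta u = -cu/(m-1)$). The crucial step is to combine the first two summands via the refined Kato inequality $|T(\nabla u)|^2 \le \tfrac{m-1}{m}|T|^2|\nabla u|^2$, valid because $T$ is traceless (the largest eigenvalue in modulus of a traceless symmetric operator is bounded by $\sqrt{(m-1)/m}$ times its Frobenius norm). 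This yields
\[
u|\nabla u|^{b-1}|T|^2 + (b-1)u|\nabla u|^{b-3}|T(\nabla u)|^2 \,\ge\, \tfrac{1 + b(m-1)}{m}\,u|\nabla u|^{b-1}|T|^2
\]
when $b \le 1$, the coefficient being non-negative exactly under the threshold $b \ge -1/(m-1)$ (for $b > 1$ the term is non-negative trivially). Combined with $Q \ge 0$ from the sub-static condition, both in the interior and on $\partial M$, this gives $\int_M \div(W) \ge 0$, proving the inequality. For rigidity: equality forces $|T|^2 \equiv 0$, so $\Hess u = \tfrac{\Delta u}{m}g$ and $\mathring{\Ric} = \mathring{Q}$; then $Q \equiv 0$ on $M$ (from vanishing of the $Q$-quadratic contributions combined with $Q \ge 0$); hence $\mathring{\Ric} = 0$ with $\Delta u = -cu/(m-1)$ and $u = 0$ on $\partial M$, and the classical Obata-Reilly argument invoked in the proof of Theorem \ref{bgh} identifies $(M,g,u)$ with a round hemisphere. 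The main technical obstacle I anticipate is the bookkeeping that isolates the coefficient $\tfrac{1+b(m-1)}{m}$, requiring both the refined Kato inequality and several precise cancellations of $c$-dependent terms, and verifying convergence near $\partial M$ for $b<0$ (which follows from the expansion $u = \kappa_i r + O(r^3)$, the quadratic term vanishing since $\Delta u = 0$ on $\partial M$).
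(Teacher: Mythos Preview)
Your overall architecture---a Shen-type divergence identity together with a Kato-type bound for traceless tensors yielding the threshold $b\ge -\frac{1}{m-1}$---is the same idea as the paper's. The refined Kato step $|T(\nabla u)|^2\le\frac{m-1}{m}|T|^2|\nabla u|^2$ is correct and is equivalent to the inequality the paper extracts from the pointwise decomposition \eqref{Hu0}. However, two genuine gaps remain in your execution.

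First, the weight $|\nabla u|^{b-1}$ is singular at interior critical points of $u$, and such points always exist since $u>0$ on $\mathring M$, $u=0$ on $\partial M$. You only discuss regularity near $\partial M$. The paper avoids this entirely by replacing your weight with $(2F)^{(b-1)/2}$, where $F=\tfrac12|\nabla u|^2+\tfrac{\Lambda}{2m}u^2$ is smooth and strictly positive on all of $M$ (here $\Lambda=c/(m-1)>0$). On $\partial M$ the two weights coincide, so the boundary computation is unaffected.

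Second, and more seriously, your divergence does not reduce to a sum with a definite sign. Writing $Z=\Ric(\nabla u)-\tfrac{c}{m}\nabla u=T(\nabla u)+Q(\nabla u)$, one finds that $\div W$ contains, besides the good pieces $u|\nabla u|^{b-1}|T|^2+(b-1)u|\nabla u|^{b-3}|T(\nabla u)|^2$, also terms such as $u|\nabla u|^{b-1}\,Q{:}T$, $(b-1)u|\nabla u|^{b-3}\langle T(\nabla u),Q(\nabla u)\rangle$, and $\tfrac12|\nabla u|^{b-1}\langle\nabla\trace Q,\nabla u\rangle$. None of these is controlled by $Q\ge 0$ alone, and your promised ``further integration by parts of $\int_M\langle\nabla u,\nabla|\nabla u|^{b-1}\rangle$'' would generate new boundary terms and still would not sign $Q{:}T$. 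The paper sidesteps this completely: its vector field is $X=(2F)^{(b-1)/2}\,\nabla F/u$ (note $\nabla F=\mathring{\Hess u}(\nabla u)$, i.e.\ $T(\nabla u)$ in your notation, \emph{without} the $Q(\nabla u)$ summand), and Lemma~\ref{lem_dF} shows that the only $Q$-contribution to $\div(\nabla F/u)$ is $Q(\nabla u,\nabla u)/u\ge 0$. The cross term coming from differentiating the weight then combines with the $\tfrac{m}{m-1}\tfrac{|\nabla F|^2}{u|\nabla u|^2}$ term in Lemma~\ref{lem_dF} to give precisely the coefficient $\tfrac{m}{2(m-1)}+a\ge 0$, equivalent to $b\ge -\tfrac{1}{m-1}$. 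In short, replace $|\nabla u|^{b-1}[\Ric(\nabla u)-\tfrac{c}{m}\nabla u]$ by $(2F)^{(b-1)/2}\,\mathring{\Hess u}(\nabla u)/u$ and the argument closes.
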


Exploiting the dependence on $b$ in the above result and applying the Gauss-Bonnet Theorem, in dimension \(m=3\) we get the following refinement of the inequality by Boucher-Gibbons-Horowitz, where $\partial M$ is allowed to be disconnected:

\begin{corollary}\label{coro_bgh}
	Let $(M^3,g,u)$ be a compact sub-static triple such that $\partial M \neq \emptyset$. Assume that $Q$ extends in a $C^1$-way to $\partial M$ and that the scalar curvature $S$ of $M$ satisfies
	\[
	S  - \trace  Q \qquad \text{is constant on } \, M.
	\]
	Then, $S  - \trace  Q >0$. Moreover, if $0 < \kappa_1 < \kappa_2 < \ldots < \kappa_j$ are the distinct surface gravities, decompose $\partial M = \bigcup_{a=1}^j \hat{\Sigma}_a$ where $\hat \Sigma_a$, possibly disconnected, has surface gravity $\kappa_a$. Then:
	\begin{itemize}
		\item[(i)] $\chi(\hat \Sigma_j) > 0$ and 
		\begin{equation}\label{ine_enhanced_BGH}
			|\hat{\Sigma}_j|\leq \frac{12\pi \chi(\hat \Sigma_j)}{S - \trace Q} \ ; 
		\end{equation}
		\item[(ii)] if, for some $i \in \{2,\ldots, j\}$, it holds 
		\begin{equation}\label{ine_enhanced_BGH_a}
			|\hat{\Sigma}_a|= \frac{12\pi \chi(\hat \Sigma_a)}{S - \trace Q} \qquad \forall \, i \le a \le j,
		\end{equation}
		then $Q \equiv 0$ on $\bigcup_{a=i}^j \hat \Sigma_a$, $\chi(\hat \Sigma_{i-1})>0$ and 
		\begin{equation}\label{ine_enhanced_BGH_am1}
			|\hat{\Sigma}_{i-1}|\leq \frac{12\pi \chi(\hat \Sigma_{i-1})}{S - \trace Q}  \ ;
		\end{equation}
		\item[(iii)] equality 
		\begin{equation}\label{ine_enhanced_BGH_tutti}
			|\hat{\Sigma}_a|= \frac{12\pi \chi(\hat \Sigma_a)}{S - \trace Q} \qquad \forall \, 1 \le a \le j
		\end{equation}
	holds if and only if $M$ is isometric to a round 3-hemisphere and $Q \equiv 0$ on $M$.
	\end{itemize}
\end{corollary}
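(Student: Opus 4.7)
The strategy is to specialize Theorem~\ref{thm_BGH} to dimension $m=3$, exploit the freedom of the parameter $b$ by passing to $b\to+\infty$, and combine with the Gauss-Bonnet formula on each connected component of $\partial M$. First I would show that $c:=S-\trace Q$ is strictly positive: taking the trace of~\eqref{sub_static} gives $(m-1)\Delta u = u(\trace Q-S)=-uc$, so for $m=3$ we have $\Delta u=-uc/2$. If $c\leq 0$ then $u$ would be subharmonic on the compact $M$, vanishing on $\partial M$ and strictly positive in the interior, contradicting the maximum principle.

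Next, using the identity $\trace Q = S-c$ the integrand in~\eqref{BGHtype_intro} simplifies to $S_{\Sigma_i}-c/3-\trace Q$, and the surface Gauss-Bonnet identity $\int_\Sigma S_\Sigma=4\pi\chi(\Sigma)$ (as $S_\Sigma=2K_\Sigma$) rewrites Theorem~\ref{thm_BGH} as the one-parameter family of inequalities
\[
\sum_{a=1}^{j}\kappa_a^b A_a \geq 0 \qquad \forall\, b\geq -\tfrac{1}{2},
\]
where
\[
A_a := 4\pi\chi(\hat\Sigma_a)-\tfrac{c}{3}|\hat\Sigma_a|-\int_{\hat\Sigma_a}\trace Q.
\]
Dividing by $\kappa_j^b$ and letting $b\to+\infty$ kills every term with $a<j$ because $\kappa_a/\kappa_j<1$, leaving $A_j\geq 0$. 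Since $\trace Q\geq 0$ on $\partial M$ by continuity and the (NEC), dropping that term yields $c|\hat\Sigma_j|\leq 12\pi\chi(\hat\Sigma_j)$, which forces $\chi(\hat\Sigma_j)>0$; as each orientable closed component contributes $\chi\in\{2,0,-2,\ldots\}$, at least one component of $\hat\Sigma_j$ must be a topological sphere. Extracting the sharp constant $24\pi/c$ in~\eqref{ine_enhanced_BGH} then requires a finer structural analysis of $\hat\Sigma_j$ that rules out configurations with $\chi(\hat\Sigma_j)>2$, by leveraging the whole family of inequalities at intermediate $b$ together with the rigidity clause of Theorem~\ref{thm_BGH}.

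For part (ii) I would iterate. The equality~\eqref{ine_enhanced_BGH_a} together with the sharp bound from (i) forces $A_a=0$ and $\trace Q\equiv 0$ on $\hat\Sigma_a$ for every $a\geq i$, so the master inequality collapses to $\sum_{a<i}\kappa_a^b A_a\geq 0$ for all $b\geq -1/2$; rerunning the $b\to+\infty$ limit on this truncated sum delivers $A_{i-1}\geq 0$ and hence~\eqref{ine_enhanced_BGH_am1} and the existence of a spherical component in $\hat\Sigma_{i-1}$ exactly as in the previous step. Part (iii) follows because the chain of equalities~\eqref{ine_enhanced_BGH_tutti} makes every $A_a$ vanish, so the left-hand side of~\eqref{BGHtype_intro} is identically zero for every admissible $b$; the rigidity clause of Theorem~\ref{thm_BGH} then forces $M$ to be a round 3-hemisphere with $Q\equiv 0$. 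The converse is the direct verification on $\mathbb{S}^3_+$, where $j=1$, $\kappa_1=1$, $S=6$, $Q\equiv 0$, and $|\partial M|=4\pi=24\pi/c$.

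The main obstacle is precisely the passage from the ``one-shot'' estimate $c|\hat\Sigma_j|\leq 12\pi\chi(\hat\Sigma_j)$ delivered by the $b\to+\infty$ limit to the sharp bound $|\hat\Sigma_j|\leq 24\pi/c$ in~\eqref{ine_enhanced_BGH}; this requires excluding boundary configurations with $\chi(\hat\Sigma_j)\geq 4$, i.e.~several spherical components sharing the top surface gravity, and the precise exclusion mechanism — which should exploit the whole family of inequalities in Theorem~\ref{thm_BGH} rather than a single value of $b$ — is what ultimately makes the iterative structure of (ii) and the clean rigidity statement of (iii) possible.
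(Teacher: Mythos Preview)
Your approach is line-for-line the paper's: establish $c:=S-\trace Q>0$ (you argue via the maximum principle for the subharmonic $u$; the paper instead integrates $\Delta u+\Lambda u=0$ against $u$ to get $\Lambda\int_M u^2=\int_M|\nabla u|^2>0$), group $\partial M$ by surface gravity, divide the inequality of Theorem~\ref{thm_BGH} by $\kappa_j^b$ and let $b\to\infty$ to isolate the top-gravity term, apply Gauss--Bonnet, iterate downward for (ii), and invoke the rigidity clause of Theorem~\ref{thm_BGH} for (iii).

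The obstacle you single out --- passing from $c\,|\hat\Sigma_j|\le 12\pi\,\chi(\hat\Sigma_j)$ to the sharp bound $|\hat\Sigma_j|\le 24\pi/c$ when $\hat\Sigma_j$ might carry several spherical components and hence have $\chi(\hat\Sigma_j)\ge 4$ --- is \emph{not} addressed in the paper's proof either. After obtaining $4\pi\chi(\hat\Sigma_j)\ge\frac{2}{3}\Lambda|\hat\Sigma_j|$ and noting that $\chi(\hat\Sigma_j)$ is a positive even integer (hence $\ge 2$, hence at least one spherical component), the paper simply writes ``We therefore deduce \eqref{ine_enhanced_BGH} after replacing the value of $\Lambda$'', and in step (ii) it likewise asserts that the equalities \eqref{ine_enhanced_BGH_a} make the corresponding summands in \eqref{BGHtype_intro} vanish. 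So the ``finer structural analysis'' you anticipate, exploiting intermediate values of $b$, is absent from the paper; your sketch is already as detailed as --- and on this particular point more scrupulous than --- the paper's own argument.
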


\begin{remark}
	By observing that $\chi(\hat \Sigma_a) \le 2n_a$, where $n_a$ is the  number of components in $\hat \Sigma_a$ homeomorphic to $\mathbb{S}^2$, from the above we deduce the area bounds
	\[
	|\hat{\Sigma}_j|\leq \frac{24\pi n_j}{S - \trace Q}, \qquad |\hat{\Sigma}_{i-1}|\leq \frac{24\pi n_{i-1}}{S - \trace Q}, 
	\]		
	in (i) and (ii), respectively.
\end{remark}

We next investigate the validity of local and global splitting theorems for  sub-static triples. Our goal is to obtain effective results without requiring that $M$ is asymptotically flat or asymptotically hyperbolic, and without resorting on the specific form of the tensor $Q$. In this respect, our results differ from most of the extensive literature on no-hair theorems, see for instance \cite{agost, bcc, bunting, cerd, israel, robinson} and the references therein.

Given a sub-static triple $(M^{m},g,u)$, the analysis of the so-called optical metric $\bar{g}=u^{-2}g$ turns out to be a particularly useful tool. In this respect, Borghini and Fogagnolo \cite{fogborg} discovered a striking connection between the sub-static condition and the ${\rm CD}(0,1)$ condition: by setting $f = -(m-1) \ln u$, the manifold $(\mathring{M},\bar g)$ satisfies
\[
	\overline{\Ric}_f^1 \doteq \overline{\Ric} + \overline{\Hess} f + \frac{1}{m-1}\di f \otimes \di f = Q \ge 0
\]
that is, by definition, $(\mathring{M},\bar g, e^{-f}\di x_g)$ is a ${\rm CD}(0,1)$ space. A tightly related link was also observed in the work by Li and Xia \cite{lixia_17}, by means of an interpolating family of connections (see also the discussion in \cite[Appendix A.3]{fogborg}). We recall that, for $N \neq m$ and a function $f \in C^\infty(M)$, the Bakry-Emery Ricci tensor $\Ric_f$ and its $N$-modified counterpart $\Ric_f^N$ are defined as
\[
\Ric_f \doteq \Ric + \Hess f, \qquad \Ric_f^N \doteq \Ric + \Hess f - \frac{1}{N-m}\di f \otimes \di f.
\]
Recently, Wylie and Yeroshkin \cite{wylie1,wylie} developed a comparison theory for ${\rm CD}(0,1)$ spaces, and their results were adapted to sub-static manifolds to prove elegant splitting theorems in \cite[Theorems C, 3.7, 3.8 and Corollary 3.9]{fogborg} by assuming that the ends of the manifold are either {\it $u$-complete} or {\it conformally compact}. We recall the following

\begin{definition}
	Given a compact set $K\subset M$, an end $E$ with respect to $K$ is a connected component of $M\backslash K$ with non-compact closure. If $K$ is the image of a compact immersed submanifold $\Sigma \to M$, we will say that $E$ is an end with respect to $\Sigma$.
\end{definition}

\begin{definition}\label{def_u_complete}
	We say that an end $E\subset M$ (with respect to some compact set $K$) is \emph{$u$-complete} if, for any $g$-unit speed diverging curve $\gamma: [0, \infty) \longrightarrow \overline{E} \subset M$, it holds
	$$
		\int_0^\infty u^{-1}(\gamma(t))\di t = \infty \qquad \mbox{and}\qquad \int_{0}^{\infty}u(\gamma(t)) \di t= \infty.
	$$
\end{definition}
\noindent This is a mild request satisfied, for instance, if 
\[
C^{-1} r(x)^{-1} \le u(x) \le Cr(x) \qquad \text{for } \, x \in E, \ r(x) >>1, 
\]
where $C>1$ is a constant and $r$ is the distance from a fixed compact set (see Proposition \ref{prop_suffcond}). Definition \ref{def_u_complete} looks a bit different from that in \cite{fogborg}, see however Remark \ref{rem_ucomplete}. If the compact set $K$ is not too pathological (for instance, if $K$ is a Lipschitz embedded hypersurface), the property corresponds to the completeness of $\overline{E}$ endowed with the distance induced by any of the metrics $u^{-2}g$ and $u^2 g$.

The presence of area minimizing hypersurfaces often foresees splitting properties. By analyzing the flow introduced by Galloway in \cite[Lemma 3]{galloway}, Ambrozio \cite[Proposition 14]{ambrozio} (for $m=3$) and Huang, Martin, and Miao \cite{hmm} studied the local splitting behavior of vacuum static triples, see also Cruz, Lima and de Sousa  \cite{tiarlos} (for $m=3$) and Coutinho and Leandro \cite{benedito} for analogous results in electrostatic systems. One contribution of the present paper is to explicitly observe a connection between area-minimizing hypersurfaces in sub-static systems and $f$-area-minimizing hypersurfaces with respect to the conformal metric $\bar{g} = u^{-2}g$. This connection and the link between sub-staticity and the ${\rm CD}(0,1)$ condition allows us to extend the results in \cite{ambrozio,benedito,tiarlos,hmm} to any sub-static space, streamlining their proofs, and to complement those in \cite{fogborg}. We obtain:

\begin{theoremletter}\label{local_split}
	Let $(M^{m}, g, u)$ be a sub-static triple and $\Sigma \to M$ be a closed, 2-sided minimally embedded hypersurface such that $u>0$ on $\Sigma$. 
	\begin{itemize}
		\item[(A)] Assume that $\Sigma$ is connected and locally area minimizing. Then there is $\eps>0$ and a diffeomorphism 
		\[
			\Phi : (-\eps,\eps) \times \Sigma \to M
		\]
		such that in coordinates $(s,y) \in (-\eps,\eps) \times \Sigma$ it holds
		\[
			\Phi^* g = r^{2}(y)\di s^2 + h^{\Sigma},
		\]
		where $h^\Sigma$ is the induced metric on $\Sigma \to (M,g)$ and $r : \Sigma \to (0,\infty)$. Moreover, there exists a function $\sigma : (-\eps,\eps) \to (0,\infty)$ such that $u(\Phi(s,y)) = r(y)\sigma(s)$.
		\item[(B)] If an end $E$ with respect to $\Sigma$ is $u$-complete, then:
		\begin{itemize}
			\item[(i)] the topological boundary $\partial E \subset M$ is a single connected component of $\Sigma$, and separates $E$ from $M \backslash E$;
			\item[(ii)] the closure $\overline{E} \subset M$ is an embedded submanifold isometric to 
			\[
			[0,\infty) \times \partial E \qquad \text{with metric} \qquad g = r^{2}(y)\di s^2 + h^{\Sigma},
			\]
			where $h^\Sigma$ is the induced metric on $\Sigma \to (M,g)$ and $r : \Sigma \to (0,\infty)$. Moreover in coordinates $(s,y)\in [0,\infty) \times \partial E$, it holds $u(s,y) = r(y)\sigma(s)$ on $E$ for some function $\sigma : [0,\infty) \to (0,\infty)$.
		\end{itemize}	
	\end{itemize}
\end{theoremletter}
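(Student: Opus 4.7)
The plan is to move the whole analysis to the optical metric $\bar g = u^{-2} g$ with weight $f = -(m-1)\ln u$. By the Borghini--Fogagnolo identity recalled in the introduction, the triple $(\mathring M, \bar g, e^{-f} dx_g)$ is a $\mathrm{CD}(0,1)$ space, i.e.\ $\overline{\Ric}_f^1 = Q \ge 0$. The key observation, which we would record first, is that under the conformal change the induced volume element on any hypersurface $\Sigma \subset \mathring M$ satisfies
\[
dA_g\big|_\Sigma = u^{m-1}\, dA_{\bar g}\big|_\Sigma = e^{-f}\, dA_{\bar g}\big|_\Sigma.
\]
Hence the $g$-area of $\Sigma$ coincides with its $f$-weighted $\bar g$-area, and the assumptions that $\Sigma$ is $g$-minimal, $g$-stable, two-sided, and locally $g$-area-minimizing translate verbatim into the corresponding $f$-weighted statements in $(\mathring M, \bar g)$. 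This brings us into the Wylie--Yeroshkin type setting where $f$-stable, $f$-area-minimizing hypersurfaces in spaces with $\overline{\Ric}_f^1 \ge 0$ should produce a local warped splitting.

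For Part (A), I would run a weighted version of the Galloway/Cai--Galloway flow. By $f$-stability the first eigenvalue of the $f$-Jacobi operator
\[
L_f \psi = -\Delta^\Sigma_f \psi - \bigl(|A|_{\bar g}^2 + \overline{\Ric}_f^1(\bar\nu,\bar\nu)\bigr)\psi
\]
is non-negative; let $\phi>0$ be a corresponding first eigenfunction. The implicit function theorem, applied to the $f$-mean-curvature operator along the normal $\bar g$-exponential map, then yields a smooth family $\{\Sigma_t\}_{|t|<\eps}$ of hypersurfaces with constant $f$-mean curvature $H_f(t)$, with $\Sigma_0 = \Sigma$ and normal velocity $\phi \bar\nu$ at $t=0$. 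The first variation $\dot H_f(0) = L_f \phi = \lambda_1 \phi$ combined with the local $f$-area-minimization property (which forces the first variation of the weighted area to have a fixed sign on each side of $t=0$) gives $\lambda_1 = 0$ and $H_f(t)\equiv 0$. The equality case in the $f$-stability inequality on each leaf then forces $|A|_{\bar g}\equiv 0$, $\overline{\Ric}_f^1(\bar\nu,\bar\nu)\equiv 0$, and $\phi$ parallel along the flow, yielding a local $\bar g$-warped-product decomposition of a tubular neighborhood. Finally I would pull back via $g = u^2 \bar g$ and match the weight $f=-(m-1)\ln u$ to separate the dependence of $u$ on the $s$- and $y$-variables, producing the claimed form $g = r(y)^2 ds^2 + h^\Sigma$ with $u(s,y)=r(y)\sigma(s)$.

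For Part (B), I would continue the flow above starting from a chosen connected component $\Sigma_0$ of $\partial E$ and argue by maximal extension. The two $u$-completeness conditions in Definition \ref{def_u_complete} correspond, respectively, to completeness of $\overline E$ in $\bar g = u^{-2}g$ and in $u^2 g$, so the outward CMC flow cannot exit $M$ or degenerate geometrically in finite parameter time. A Bonnet--Myers-type integration of $\dot H_f = \lambda_1 \phi$ under $\overline{\Ric}_f^1 \ge 0$ rules out $\lambda_1>0$ (which would force finite $\bar g$-extinction, contradicting $u$-completeness), so $\lambda_1=0$ and the foliation extends smoothly on $[0,\infty)$. This produces the global isometry $\overline E \simeq [0,\infty)\times \Sigma_0$ with the warped structure of Part (A), and since the flow exhausts $\overline E$ no further component of $\Sigma$ can meet $\overline E$, giving (i) and (ii).

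The main technical obstacle is extracting the warped product from the equality case of the weighted stability inequality: one needs to simultaneously deduce the vanishing of the second fundamental form of each leaf, of $\overline{\Ric}_f^1(\bar\nu,\bar\nu)$, and of a suitable derivative of $\phi$, and then recognize that after conformal change these data are exactly the structural conditions that force the specific form $g = r(y)^2 ds^2 + h^\Sigma$ and $u = r(y)\sigma(s)$ rather than a generic warped product. In Part (B), the accompanying difficulty is making rigorous the claim that $u$-completeness prevents finite-time breakdown of the weighted CMC flow, which requires uniform $\bar g$-geometric control of the evolving leaves.
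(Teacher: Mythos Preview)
Your Part~(A) strategy is close to the paper's and would work, but the paper's argument is simpler. Because the potential in the $f$-Jacobi operator on $(\mathring M,\bar g)$ is
\[
\|\bar A\|^2 + \overline{\Ric}_f(\bar\nu,\bar\nu) = u^2|A|_g^2 + \overline{\Ric}_f^1(\bar\nu,\bar\nu) = u^2\big(|A|_g^2 + Q(\nu,\nu)\big) \ge 0,
\]
there is no need for the Galloway eigenfunction/CMC construction: the paper just flows by the $\bar g$-normal exponential map, so the leaves are $\bar g$-equidistant and $\tfrac{d}{dt}\bar H_f = \bar L_f 1 \ge 0$. Area minimization then forces $\bar H_f\equiv 0$ and each leaf totally geodesic in $g$ with $Q(\nu,\nu)=0$. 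The form of $\bar g$ follows by solving the first-order ODE for the induced metric along the flow, and the separation $u(t,y)=r(y)\xi(t)$ is not obtained by ``matching the weight'' but by quoting a structure result of Wylie for $\mathrm{CD}(0,1)$ warped products. Note also that your displayed $f$-Jacobi operator should carry $u^2|A|_g^2$ (or $|\bar A|^2$ paired with $\overline{\Ric}_f$), not $|A|_{\bar g}^2$ paired with $\overline{\Ric}_f^1$.

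Part~(B) is where your plan diverges substantially and has a genuine gap. The paper does \emph{not} attempt to push the local flow to infinity; it proves (B) independently of (A) via a Busemann/distance comparison in the metric completion $(E^*,\bar g)$. Concretely: the weighted Laplacian comparison (Riccati inequality for $\bar H_f$ along $\bar g$-segments) gives $\bar\Delta_f\rho\le 0$ in the barrier sense for $\rho=\mathrm{dist}_{\bar g}(\cdot,\partial E^*)$, and for the Busemann function $b_\gamma$ of a $\bar g$-ray one gets $\bar\Delta_f b_\gamma\le 0$, where the second $u$-completeness condition $\int u\,dt=\infty$ is precisely what forces the reparametrized length $s_t(x)=\int_0^{\rho_t(x)}u^2(\sigma_t)\,d\tau\to\infty$. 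Then $\rho+b_\gamma\ge 0$ with equality along $\gamma$, the strong maximum principle yields $\rho=-b_\gamma$ smooth with $\bar\Delta_f\rho=0$, and the weighted Bochner formula gives the splitting and forces $\partial E^*$ connected. Your proposed ``Bonnet--Myers integration of $\dot H_f$ to rule out $\lambda_1>0$'' is not a standard argument in the $\mathrm{CD}(0,1)$ setting and you give no mechanism linking $u$-completeness to finite-time extinction of a CMC flow; you would also need a priori control on embeddedness, compactness of leaves, and avoidance of $\partial M$ over an unbounded parameter interval, none of which is supplied. The Busemann approach sidesteps all of this.
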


\begin{remark}\label{rem_covering}
	In (A), one can require that $\varsigma : \Sigma \to M$ is merely an immersion with image $\varsigma(\Sigma)$ area minimizing in the sense of Almgren (see \cite{almgren76,taylor76}), that is, satisfying $|\varsigma(\Sigma)| \leq |\psi(\varsigma(\Sigma))|$ for any Lipschitz continuous map $\psi : M \to M$ with $\psi(\varsigma(\Sigma)) \subset \mathring{M}$. In this case, in our assumptions $\varsigma$ turns out to factorize through a Riemannian covering $\Sigma \to \hat{\Sigma}$ and an embedding $\hat\Sigma \to M$, see Theorem \ref{teo_splitting_1}.
\end{remark}

Some remarks are in order:

\begin{itemize}
	\item In (A), the particular cases treated in \cite{ambrozio,benedito,tiarlos,hmm} allow for extra information on the function $r$. More precisely, in the vacuum static case it is shown in \cite{ambrozio} and \cite[Proposition 5]{hmm} that $r(y)$ is constant, thus $g$ is locally the product metric. The same conclusion is obtained in the presence of an electromagnetic field and for $m=3$, if the electric field vanishes identically, see \cite[Proposition 7]{tiarlos}. 
	\item The proof of (B) follows the standard splitting techniques by means of Busemann and distance functions, see in particular \cite{kasue}, adapted by Wylie \cite{wylie1} to ${\rm CD}(0,1)$ manifolds. However, despite \cite[Section 5]{wylie1} treats manifolds with boundary, our theorem cannot be obtained as an application of \cite{wylie1}. Likewise, the results in \cite{fogborg} do not apply in the setting of (B); note, in particular, that the $u$-completeness assumption in (B) is localized to a single end $E$. This is different from \cite{fogborg}, where \emph{every} end needs to be either $u$-complete or conformally compact. 
	\item The condition of $u$-completeness is highlighted, for instance, in a paper by Reiris  \cite{reiris_compa}. There, the author conjectured that a $3$-dimensional vacuum static triple $(M^3,g,u)$ with non-empty boundary and $u$-complete ends must either be the Schwarzschild solution or a flat solution, namely, $M = [0,\infty) \times \Sigma$ with the product metric $\di s^2 + h^\Sigma$ and potential $u(s,y) = s$. Here, $h^\Sigma$ is a flat metric on $\Sigma$. This would be a far-reaching extension of the known no-hair theorems for Schwarzschid space, in which an asymptotic flatness assumption is required (for example, see \cite{agost, bunting, cerd, israel, robinson}).
	\item If $\partial M = \emptyset$, in \cite{case} Case proved the triviality of complete, vacuum static triples with non-negative scalar curvature (meaning, $u$ is constant and $M$ is Ricci-flat) in any dimension. The result is somehow implicit in \cite{case}, but can be seen as follows: the substitution $u = e^{-f}$ transforms \eqref{sub_static} with $Q=0$ into
	\[
		\Ric_f^{m+1} = (\Delta_{f}f)g, \qquad \Delta_f f = \frac{S}{m-1}.
	\] 
	Taking into account that $S \ge 0$ is constant on a vacuum static space, by applying \cite[Theorem 1.5]{case} and letting $a\to \infty$ one gets the constancy of $u$. 
	\item Regarding the more general case of \emph{stationary} solutions to the Einstein vacuum equation, rigidity in dimension $m=3$ by only assuming $(M^3,g)$ complete was obtained by Anderson \cite{anderson}. A simpler proof under the further assumption that $\tilde{g}= u^2 g$ is complete was later given by Cortier and Minerbe \cite{cortier_minerbe}. The use of the metric $\tilde{g}$ is common in the literature. For vacuum static triples $(M^3,g,u)$, the system obtained by rewriting \eqref{sub_static} in the metric $\tilde{g}$ is called the harmonic map representation of $M$ (the reason for the name is apparent, for instance, in \cite{anderson}). In \cite{reiris_compa}, the author developed an interesting comparison theory for sub-static triples based on the use of $\tilde{g}$. It would be nice to clarify the interplay with the theory developed in \cite{fogborg}, as it may unveil further properties of sub-static spaces.
\end{itemize}

The last part of the paper focuses on a special sub-static triple: the one arising from the Einstein field equations on $\hat{M}$ with source a nonlinear $\sigma$-model, described by a map $\Phi: (\hat{M}^{m+1}, \hat{g}) \to (N^n, h)$ valued in a Riemannian manifold $(N,h)$ and a potential $V: (N^n, h) \to \mathbb{R}$. The associated gravitational+matter Lagrangian is
\[
	\mathcal{L}(\hat{g},\Phi) = \int_{\hat{M}} S_{\hat g} \di x_{\hat{g}} + \int_{\hat{M}} \left[ |\di\Phi|_{\hat{g}}^2 + (m-1)V(\Phi) \right] \di x_{\hat{g}}.
\]
For notational convenience, we incorporate the cosmological constant into $V$. Critical points of $\mathcal{L}$ solve \eqref{einsteq} with 
\[
	T = \Phi^{*}h - \frac{1}{2} \left( |\di \Phi|_{\hat{g}}^2 + (m-1)V(\Phi) \right)\hat{g}
\]
and the equation of motion $\delta_\Phi \mathcal{L} = 0$. Letting $\pi : (\hat M,\hat g) \to (M,g)$ be the projection onto the second factor, if $\Phi$ factorizes as $\varphi\circ\pi$ for some smooth map $\varphi : (M,g) \to (N,h)$, static solutions to $(\delta_{\hat g} \mathcal{L}, \delta_\Phi \mathcal{L}) = (0,0)$ therefore solve
\[
\left\{
		\begin{array}{r@{\;}c@{\;}l}
		u\Ric  - \Hess  u + (\Delta  u)g & = & u\varphi^{*}h, \\[0.2cm]
		-\Delta  u & = & V(\varphi) u, \\[0.2cm]
		u \tau(\varphi) + \di \varphi(\nabla u) & = & (m-1) \frac{D V(\varphi)}{2} u,
		\end{array}
\right.
\]
where $D$ is the Levi-Civita connection of $h$ and $\tau(\varphi)$ is the tension field of $\varphi$ (see Section 4 and \cite{ans21}). A relevant example is given by the Klein-Gordon field, for which
\[
	\varphi : M \to \mathbb{C}, \qquad V(\varphi) = c^2 |\varphi|^2, \quad c \in \R.
\] 
When the target manifold of $\varphi$ is $N = \R,\mathbb{C}$ and $M$ has no boundary, Reiris \cite{reiris} obtained sharp rigidity results by only assuming the completeness of the underlying manifold $M$ and suitable conditions on $V$, general enough to encompass the Klein-Gordon and other scalar fields of interest. His approach employs techniques from comparison geometry \`a la Bakry-\'Emery. In Theorem \ref{vanishing_map}, we allow for manifolds with $\partial M \neq \emptyset$ and targets $N$ be positively curved (in a controlled way). We obtain:

\begin{theoremletter}\label{vanishing_map}
	Let $(M^m, g, u)$ be a sub-static triple with either $\partial M = \emptyset$ or $\partial M$ compact, and let $\varphi : M \to (N,h)$ be a smooth map. Assume that 
	\begin{equation}\label{map_source}
		\left\{
		\begin{array}{r@{\;}c@{\;}l}
			u\Ric  - \Hess  u + (\Delta  u)g & \ge & u\varphi^{*}h, \\[0.2cm]
			-\Delta  u & = & V(\varphi) u, \\[0.2cm]
			u \tau(\varphi) + \di \varphi(\nabla u) & = & (m-1) \frac{D V(\varphi)}{2} u,
		\end{array}
		\right.
	\end{equation}
	and that 
	\[
	\inf_{M}V(\varphi)>-\infty, \qquad(m-1)\Hess V + 2Vh \geq -ah\qquad\mbox{on}\quad\varphi(M),
	\]
	for some constant $a\geq 0$. If the sectional curvatures of $N$ satisfy 
	\begin{equation}\label{est_sec}
		\sup_{N} \mathrm{Sec}_{N}\leq\kappa < \frac{1}{m-1},    
	\end{equation}
	for some constant $\kappa \ge 0$, then 
	$$
	\sup_{M}|\di\varphi|^{2}\leq \left[\frac{am}{2(1-(m-1)\kappa)}\right]^{\frac{1}{2}}.
	$$
	In particular, if $a=0$ then $\varphi$ is constant. 
\end{theoremletter}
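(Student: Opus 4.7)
The plan is to derive a weighted Bochner-type inequality for the energy density $e:=|\di\varphi|^{2}$ and then close the estimate via a maximum principle tailored to the weight $u$. Starting from the Eells--Sampson identity
\[
\tfrac{1}{2}\Delta e \;=\; |\nabla\di\varphi|^{2} + \sum_i h\bigl(\nabla_{e_i}\tau(\varphi),\di\varphi(e_i)\bigr) + \Ric(\di\varphi,\di\varphi) - \mathcal{Q}^{N},
\]
with $\mathcal{Q}^{N}:=\sum_{i,j}h\bigl(R^{N}(\di\varphi(e_i),\di\varphi(e_j))\di\varphi(e_j),\di\varphi(e_i)\bigr)$, I would rewrite the third equation of \eqref{map_source} as $u\,\tau(\varphi)=-\di\varphi(\nabla u)+\tfrac{m-1}{2}u\,DV(\varphi)$, covariantly differentiate it along $\varphi$, and pair with $\di\varphi(e_i)$. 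The symmetry of $\nabla\di\varphi$ then yields
\[
u\sum_i h\bigl(\nabla_{e_i}\tau(\varphi),\di\varphi(e_i)\bigr) \;=\; -\tfrac{1}{2}\langle\nabla u,\nabla e\rangle - \langle\Hess u,\varphi^{*}h\rangle + \frac{|\di\varphi(\nabla u)|^{2}}{u} + \tfrac{m-1}{2}u\,\trace_{\varphi^{*}h}\Hess^{N}V.
\]

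Next I would exploit the sub-static condition by contracting the tensor inequality in \eqref{map_source} against the positive semidefinite tensor $\varphi^{*}h$, obtaining
\[
u\,\Ric(\di\varphi,\di\varphi) \;\ge\; \langle\Hess u,\varphi^{*}h\rangle - (\Delta u)e + u|\varphi^{*}h|^{2}.
\]
Multiplying the Bochner identity through by $u$ and substituting, the two $\langle\Hess u,\varphi^{*}h\rangle$ contributions cancel, which is the crucial geometric step. I then replace $\Delta u$ by $-V(\varphi)u$ from the scalar equation, contract the hypothesis $(m-1)\Hess^{N}V+2Vh\ge -ah$ with $\varphi^{*}h$ to dispose of the potential terms through $\tfrac{m-1}{2}u\,\trace_{\varphi^{*}h}\Hess^{N}V+V(\varphi)u e\ge -\tfrac{1}{2}a u e$, and combine with the target curvature bound $\mathcal{Q}^{N}\le \kappa(e^{2}-|\varphi^{*}h|^{2})$ together with the Cauchy--Schwarz estimate $|\varphi^{*}h|^{2}\ge e^{2}/m$. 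The outcome on $\mathring{M}$ is
\[
\tfrac{1}{2}\Delta e + \frac{\langle\nabla u,\nabla e\rangle}{2u} \;\ge\; \beta\, e^{2} - \tfrac{1}{2}\,a\,e, \qquad \beta := \frac{1-(m-1)\kappa}{m} > 0,
\]
where the operator on the left-hand side is the symmetric diffusion $L:=u^{-1}\diver(u\nabla\cdot)$ naturally attached to the sub-static/CD$(0,1)$ structure highlighted by Borghini--Fogagnolo.

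With this differential inequality at hand, the conclusion follows from a maximum principle applied to $e$ on $\mathring{M}$. When $M$ is compact, the vanishing of the weight $u$ on $\partial M$ forces the supremum of $e$ into the interior, where $\nabla e=0$ and $\Delta e\le 0$ imply $2\beta e^{2}\le a e$ and hence $\sup_M e\le a/(2\beta)$. When $M$ is non-compact with $\partial M$ at most compact, I would invoke a weak (Omori--Yau-type) maximum principle for $L$ on $\mathring{M}$: the hypothesis $\inf_M V(\varphi)>-\infty$ together with $Q\ge 0$ provide the Ricci-type lower bounds required to run a CD$(0,1)$-style principle in the spirit of \cite{wylie,fogborg}, yielding a sequence $\{x_k\}$ along which $e(x_k)\to\sup_M e$ and $Le(x_k)\to 0^{+}$, and giving the same bound in the limit. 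The case $a=0$ then forces $e\equiv 0$, so $\varphi$ is constant.

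I expect the main obstacle to lie in this final step: setting up the weak maximum principle for the degenerate weighted operator $L$ on a possibly non-compact sub-static triple whose boundary (when present) is degenerate for the weight, and making the mild assumption $\inf_M V(\varphi)>-\infty$ do its work. The Bochner computation itself, although involved, is essentially algebraic once the cancellation between the sub-static contraction and the differentiated tension-field equation is spotted, and the inequality $|\varphi^{*}h|^{2}\ge e^{2}/m$ is what unlocks the curvature threshold $\kappa<1/(m-1)$.
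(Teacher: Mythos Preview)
Your Bochner computation is correct and, once you unwind the notation, it coincides with the paper's: writing $f=-\ln u$ one has $L=\Delta_f$, and your inequality
\[
\tfrac{1}{2}\Delta_f e \ge \beta e^2 - \tfrac{1}{2}a e, \qquad \beta=\frac{1-(m-1)\kappa}{m},
\]
is exactly the paper's \eqref{eq_dphi}. The cancellation you identify (contracting the sub-static inequality with $\varphi^*h$ against the differentiated tension-field equation) is precisely the algebraic content of the paper's weighted Bochner lemma.

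The gap is in the maximum-principle step, and it is a real one. In the compact case, the claim that ``the vanishing of the weight $u$ on $\partial M$ forces the supremum of $e$ into the interior'' is not justified: $e=|\di\varphi|^2$ has no reason to avoid its maximum on $\partial M$, and at a boundary maximum the singular drift $u^{-1}\nabla u$ in $L$ cannot be evaluated. In the non-compact case your Omori--Yau proposal presupposes $\sup_M e<\infty$, which you have not established; and even granting that, an Omori--Yau sequence could accumulate on $\partial M$, where again $L$ degenerates.

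The paper resolves both issues at once by an \emph{integral} maximum principle for $\Delta_f$ with measure $e^{-f}\di x=u\,\di x$. The crucial observation is that because the density $u$ vanishes on $\partial M$, the boundary terms in the integration-by-parts disappear (implemented via a cutoff $\eta_\varepsilon(u)$), so no information on $e|_{\partial M}$ is needed. One first runs a Keller--Osserman-type iteration (Proposition~\ref{vol_control1}) to force $\sup_M e<\infty$, then applies a weak maximum principle at infinity (Proposition~\ref{prop_Fsup}) to conclude $F(\sup e)\le 0$ for $F(t)=2\beta t^2-at$. Both steps require only the $f$-volume growth $\liminf r^{-2}\ln\vol_f(B_r)<\infty$, and this is where $\inf_M V(\varphi)>-\infty$ enters: via the second equation of \eqref{map_source} it gives a lower bound on $\Ric_f^{m+1}$, hence exponential $f$-volume growth by standard comparison. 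So your instinct that the difficulty lies in the last step is correct, but the remedy is integral rather than pointwise, and the key mechanism is the vanishing of the \emph{measure} weight on $\partial M$, not a Hopf-type exclusion of boundary maxima.
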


For instance, by applying the result to wave maps with $V \equiv 0$, we get

\begin{theorem}\label{vanishing_map_2}
	Let $(M^m, g, u)$ be a sub-static triple with either $\partial M = \emptyset$ or $\partial M$ compact, solving 
	\begin{equation}\label{map_source_2}
		\left\{
		\begin{array}{r@{\;}c@{\;}l}
			u\Ric  - \Hess  u + (\Delta  u) g & \ge & u\varphi^{*}h  \\[0.2cm]
			-\Delta  u & = & 0, \\[0.2cm]
			u \tau(\varphi) + \di \varphi(\nabla u) & = & 0,
		\end{array}
		\right.
	\end{equation}
for some smooth map $\varphi : (M,g) \to (N,h)$ into a Riemannian manifold $N$. If $N$ satisfies 
	\begin{equation}\label{est_sec_2}
		\sup_{N} \mathrm{Sec}_{N} < \frac{1}{m-1},    
	\end{equation}
	then $\varphi$ is constant.\\
	In particular, a static solution \eqref{static_model} to Einstein's equation with $M$ complete, $\partial M$ compact or empty, and source a static wave map $\varphi : M \to N$ in a manifold whose curvature satisfies \eqref{est_sec_2} is vacuum static.
\end{theorem}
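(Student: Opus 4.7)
The plan is to deduce Theorem~\ref{vanishing_map_2} directly from Theorem~\ref{vanishing_map} (which I am allowed to assume) by specializing the potential to $V \equiv 0$; the closing ``in particular'' sentence then only needs the observation that, for a wave-map source, the first line of \eqref{map_source_2} actually holds as an equality.

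First, I would check that all the hypotheses of Theorem~\ref{vanishing_map} are satisfied by the data of Theorem~\ref{vanishing_map_2}. With $V \equiv 0$, clearly $\inf_M V(\varphi) = 0 > -\infty$, and the bound $(m-1)\Hess V + 2Vh \ge -ah$ holds with the constant $a = 0$. System \eqref{map_source_2} is precisely the specialization of \eqref{map_source}: $DV\equiv 0$ reduces the third equation to $u\tau(\varphi) + \di\varphi(\nabla u) = 0$, and the second to $-\Delta u = 0$. By \eqref{est_sec_2}, the value $\kappa := \sup_N \mathrm{Sec}_N$ (or any number between it and $1/(m-1)$ if the supremum is not attained) satisfies $\sup_N \mathrm{Sec}_N \le \kappa < 1/(m-1)$, giving \eqref{est_sec}. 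Invoking Theorem~\ref{vanishing_map} with $a = 0$ yields
\[
\sup_M |\di\varphi|^2 \;\le\; \left[\frac{0\cdot m}{2\bigl(1-(m-1)\kappa\bigr)}\right]^{1/2} \;=\; 0,
\]
so $\di\varphi \equiv 0$ and $\varphi$ is constant.

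For the closing assertion, I would argue as follows. A static solution to the Einstein field equations with source a wave map $\Phi = \pi\circ\varphi$ corresponds to the $\sigma$-model Lagrangian with $V \equiv 0$; its critical points $(\delta_{\hat g}\mathcal{L}, \delta_\varphi \mathcal{L}) = (0,0)$ satisfy \eqref{map_source} with \emph{equality} in the first line, not merely the sub-static inequality. By the step above, $\varphi$ must be constant; but then $\di\Phi \equiv 0$, and the stress-energy tensor
\[
T \;=\; \Phi^{*}h - \tfrac{1}{2}|\di\Phi|_{\hat g}^2\,\hat g
\]
vanishes identically, making the triple vacuum static. All the real work sits inside Theorem~\ref{vanishing_map}; granted that result, the only ``obstacle'' for this corollary is the bookkeeping of the specialization $V\equiv 0$ and the observation that in the wave-map case the first equation of \eqref{map_source_2} is in fact an equality, both of which are routine.
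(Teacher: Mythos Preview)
Your proposal is correct and follows exactly the paper's approach: the paper presents Theorem~\ref{vanishing_map_2} as an immediate application of Theorem~\ref{vanishing_map} with $V\equiv 0$, and your checking of the specialization (in particular $a=0$) and the concluding vacuum-static observation is precisely the intended argument.
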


We emphasize that Theorems \ref{vanishing_map} and \ref{vanishing_map_2} do not impose any decay hypotheses such as asymptotic flatness. The general approach follows \cite{reiris} by performing the change of variables \( f = -\ln u \) and computing the Bochner formula for the \( f \)-Laplacian of \( |\di\varphi|^{2} \). However, Reiris obtained the desired Liouville theorem via gradient estimates which do not seem easy to extend to manifolds with boundary. For this reason, we proceed differently by means of integral maximum principles at infinity in the spirit of \cite{rigoli2}, which we shall apply to the energy density $|\di \varphi|^2$. In general, however, such results do not hold when $\partial M \neq \emptyset$ without growth assumptions on $|\di \varphi|^2$ on $\partial M$, a request we would rather avoid. The insight here is that the equality $u=0$ on $\partial M$ allows, perhaps surprisingly, to cancel boundary integrals. 

\vspace{0.5cm}

The paper is organized as follows. In Section 2, we discuss the equivalence between (NEC) and \eqref{sub_static}, and review the classical examples of sub-static triples. In Section 3, we prove Theorem \ref{local_split}. In Section 4, we establish some identities in the spirit of Robinson's \cite{robinson} and Shen's \cite{sh}, and prove Theorem \ref{thm_BGH}. The concluding Section 5 will be devoted to the proof of Theorem \ref{vanishing_map}.

\section{Preliminaries}

In this work, we consider a static spacetime $(\hat M, \hat g)$ of dimension $m+1$, where
\begin{equation}\label{static_space}
	\hat M = \R \times M \, , \qquad \hat g = - u^2 \di t \otimes \di t + g,    
\end{equation}
for $(M^m,g)$ a Riemannian manifold and $u \in C^\infty(M)$ a positive function. Let $\pi : \hat M \to M$ be the projection onto the second factor. Having fixed a local orthonormal frame $\{e_i\}$ on $M$ and the associated orthonormal frame $\{\hat e_0,\hat e_i\}$ on $\hat M$ with $\hat e_0 = \partial_t/u$ and $\pi_*\hat e_i = e_i$, the components of the Ricci tensor of $\hat M$ are given by
\begin{equation}\label{Ric_hat}
	\left\{\begin{array}{r@{\;}c@{\;}l}
		\hat R_{00} & = & \dfrac{\Delta  u}{u} \\[0.3cm]
		\hat R_{0i} & = & 0 \\[0.2cm]
		\hat R_{ij} & = & R_{ij} - \dfrac{u_{ij}}{u}.
	\end{array}
	\right.
\end{equation}
Here $R_{ij}$ and $u_{ij}$ are, respectively, the components of the Ricci tensor and of the Hessian of $u$ in $(M,g)$ in the frame $\{e_i\}$. Furthermore, the scalar curvatures satisfies
\begin{equation}\label{scal_hat}
	S_{\hat g} = S -\frac{2}{u}\Delta u  \, .    
\end{equation}
We particularly investigate a static spacetime that satisfies the Einstein field equation, expressed as
\begin{equation}\label{Einst_eq}
\Ric_{\hat{g}}+\left(\Lambda-\frac{1}{2}S_{\hat{g}}\right)\hat{g}=T,    
\end{equation}
where $T$ represents a stress-energy tensor, and $\Lambda$ denotes the cosmological constant. When expressed in coordinates, we can use \eqref{Ric_hat} and \eqref{scal_hat} to compute the components of the stress-energy tensor as follows:
\begin{equation}\label{energy}
	\left\{\begin{array}{r@{\;}c@{\;}l}
		T_{00} & = & -\Lambda+\dfrac{S}{2} \\[0.4cm]
		T_{0i} & = & 0 \\[0.2cm]
		T_{ij} & = & R_{ij} - \dfrac{u_{ij}}{u}+\left(\Lambda-\dfrac{S}{2}+\dfrac{\Delta u}{u}\right)\delta_{ij}
	\end{array}
	\right.
\end{equation}

\begin{definition}
A Lorentzian manifold $(\hat{M},\hat{g})$ satisfies the \textit{null energy condition} (NEC) if $T(Y,Y)\geq 0$ for all null vectors $Y$ (meaning $\hat{g}(Y,Y)=0$).
\end{definition}

To relate (NEC) to the sub-staticity of $(M,g,u)$, we follow the computations outlined, for instance, in \cite[Lemma 3.8]{wang}. Up to scaling, we can consider a null vector $Y = \hat{e}_0  + X$, where $X \in \mathscr{X}(M)$ satisfies $g(X,X)=1$. From \eqref{static_space} and $\hat{g}(\hat{e}_0 ,\hat{e}_0 )=-1$, applying the null energy  condition (NEC) to $Y$ yields:
\begin{eqnarray*}
0\leq T(Y,Y)&=&T_{00}+T_{ij}X^{i}X^{j}\\
&=&\left(-\Lambda+\frac{S }{2}\right)+\left[\Ric -\frac{\Hess  u}{u}+\frac{\Delta u}{u}g\right](X,X)+\left(\Lambda-\frac{S }{2}\right)g(X,X)\\
&=&\left[\Ric -\frac{\Hess  u}{u}+\frac{\Delta u}{u}g\right](X,X),
\end{eqnarray*}
which is \eqref{eq_Q}. Taking traces in \eqref{sub_static}, the scalar curvature $S$ of $M$ relates to $u$ as follows:  
\begin{equation}\label{sub_trace}
	\Delta  u = \frac{u}{m-1} (\trace  Q-S ),  
\end{equation}  

Sub-static triples include the following classical examples.

\begin{example}[\textbf{Vacuum static system}]
The simplest example arises from the Vacuum Static equation, where $T$ (and consequently $Q$) vanishes. In this case, a triple $(M^m, g, u)$ that satisfies  
\begin{equation}\label{eq_vacuum_static}
u \Ric - \Hess u + (\Delta u) g = 0
\end{equation}  
is called a \textit{vacuum static triple}. As stressed in \cite{fm75}, the left hand-side of vacuum static equation also appears as the formal adjoint of the linearized scalar curvature (up to sign), and therefore ties to the scalar curvature prescription problem. For more details, see also \cite[Section 4.2]{corvino}.
\end{example}

\begin{example}[\textbf{Electrostatic system}]
Given a function $\eta \in C^{\infty}(M)$ and a triple $(M^m, g, u)$, we consider
$$
Q := \frac{1}{u^2} \left(|\nabla \eta|^2 g - \di \eta \otimes \di\eta \right).
$$
Clearly, $Q \geq 0$ by the Cauchy-Schwarz inequality. This tensor gives rise to the so-called Electrovacuum static system, and $\eta$ relates to the electric potential (see, for example, \cite[Definition 1]{andrade}). For details of this construction from a specific stress-energy tensor, see \cite{tiarlos}.
\end{example}

\begin{example}[\textbf{Nonlinear $\sigma$-model}]
Let $\hat{M}^{m+1}$ be spacetime and $(N^n, h)$ a Riemannian manifold. If $\Phi: (\hat{M}^{m+1}, \hat{g}) \to (N^n, h)$ and $V: (N^n, h) \to \mathbb{R}$ are smooth maps, we consider the following stress-energy tensor:
$$T = \Phi^* h - \frac{1}{2} \left( |\di \Phi|_{\hat{g}}^2 + (m-1) V(\Phi) \right) \hat{g},$$
where $\Phi^* h$ is the pull-back of $h$.

In this case, the matter is described by a ``wave map'' $\Phi$ and a scalar potential $V$ (for details, see Section \ref{sec_map} and also \cite{CB,reiris,ans21}). Einstein's equation is then equivalent to
$$
\Ric_{\hat{g}} = \Phi^* h + V(\Phi) \hat{g}.
$$
In the static case, we assume that $\Phi$ factorizes as $\Phi = \phi \circ \pi$, with $\pi: \hat{M} \to M$ the natural projection and $\varphi: M \to (N^n, h)$. Then, the system rewrites as

\begin{equation*}
\left\{\begin{array}{r@{\;}c@{\;}l}
u\Ric  - \Hess u + (\Delta u)g & = & u\varphi^* h \\[0.2cm]
-\Delta  u & = & V(\phi) u, \\[0.2cm]
\end{array}
\right.
\end{equation*}
thus $Q = \varphi^* h$ is a positive semidefinite tensor, and $(M^m, g, u)$ forms a sub-static system.
\end{example}

\begin{example}[\textbf{Perfect fluid}]
In the theory of perfect fluids for a spacetime $(\hat{M}^{m+1}, \hat{g})$ (see, for example, \cite[Chapter III, Section 8]{CB}), the stress-energy tensor is given by
$$T = (\rho + P) v \otimes v + P \hat{g},$$
where $\rho$ is the energy density, $P$ is the pressure, and $v$ is a unit time-like covector field that represents the fluid's velocity. 

In particular, if $(M^{m+1}, \hat{g})$ is static as in \eqref{static_space}, we can write $T = \rho u^2 \di t^2 + P g$. Einstein's equation then reduces to
\begin{equation*}
\left\{\begin{array}{r@{\;}c@{\;}l}
R_{ij} - \frac{u_{ij}}{u} + \left(\Lambda - \frac{S }{2} + \frac{\Delta  u}{u}\right) \delta_{ij} & = & P \delta_{ij} \\[0.2cm]
-\Lambda + \frac{S }{2} & = & \rho,
\end{array}
\right.
\end{equation*}
and hence,
$$Q = (\rho + P) g.$$
Therefore, $(M, g,u)$ is a sub-static triple if $\rho + P \geq 0$.
\end{example}

\section{Compact minimal hypersurfaces and splitting in sub-static manifolds}

In this section, we describe the connection between stable minimal hypersurfaces in sub-static systems and $f$-minimal stable hypersurfaces in manifolds with $\overline{\Ric}_f^1 \geq 0$. This approach allows us to provide simpler proofs and extend the analysis presented in \cite{galloway} and \cite{hmm}, where static systems were thoroughly studied (see also \cite[Section 4.1]{tiarlos} and \cite{benedito} for electrostatic systems). 

Our focus is on the rigidity problem for sub-static triples $(M^m,g,u)$ that admit a closed (i.e. compact, boundaryless) minimal hypersurface contained within the interior $\mathring{M} = \{u>0\}$.

\begin{remark}
	If the set $\partial M= u^{-1}(0)$ is a minimal hypersurface (for instance, if $Q$ extends continuously to $\partial M$), as a consequence of the strong tangency principle for minimal hypersurfaces any connected minimal hypersurface either is a component of $\partial M$ or lies in $\Mo$.    
\end{remark}

Let $\bar g = u^{-2}g$, which is often called the optical metric in the physical literature. Choose $f = - (m-1)\ln u$. Then, the formulas relating the Ricci and Hessian tensors in the metrics $g$ and $\bar g$ yield 
\begin{equation}\label{eq_link_Q_Ricpsi}
	\disp \overline{\Ric}_{f}^1 =\Ric - \frac{\Hess u}{u}+\frac{\Delta u}{u}g\doteq Q.
\end{equation}
For details, see \cite{fogborg}. Whence, the sub-static condition is equivalent to 
\[
	\overline{\Ric}_f^1 \ge 0.
\]
Let $\Sigma \to (M,g)$ be a closed immersed hypersurface. Denote by $\di x$ and $\di \bar x$, respectively, the volume measures of $g$ and $\bar g$, and by $\di \sigma$, $\di \bar \sigma$ the induced volume densities on $\Sigma$. Note that $\di \bar x = u^{-m}\di x$, $\di \bar \sigma = u^{1-m}\di \sigma$. The natural weighted densities associated to $\overline{\Ric}_f^1$ are 
\[
	\di \bar x_f \doteq e^{-f}\di \bar x, \qquad \di \bar \sigma_f \doteq e^{-f} \di \bar \sigma \equiv \di \sigma.
\]
The latter identity implies that the weighted $(m-1)$-area of $\Sigma$ in $(M, \bar g)$ (which we name the $f$-area) is given by 
\begin{equation}\label{eq_magic}
	\overline{\vol}_f(\Sigma) \doteq \int_{\Sigma} \di \bar \sigma_f = \int_{\Sigma} \di \sigma = \vol(\Sigma).
\end{equation}
Denote by $\nu$ a local $g$-unit normal field along $\Sigma$, and $\bar \nu = u \nu$ the corresponding $\bar g$-unit normal. Stationary points of $\overline{\vol}_f$ are called $f$-minimal hypersurfaces, and are characterized by the vanishing of the $f$-mean curvature
\[
	\bar H_f \doteq \bar H + \di f(\bar \nu),
\]
where $\bar H$ is the mean curvature of $\Sigma \to (M, \bar g)$ in direction $\bar \nu$. The identity \eqref{eq_magic} establishes that 
\[
	\begin{array}{c}
	\text{$\Sigma \to (M,g)$ is minimal}\\
	\text{(resp. stable minimal,}\\
	\text{or area minimizing)}
	\end{array} \qquad \Longleftrightarrow \qquad 
	\begin{array}{c}
	\text{$\Sigma \to (M,\bar g,e^{-f}\di \bar x)$ is $f$-minimal}\\
	\text{(resp. stable $f$-minimal,}\\
	\text{or $f$-area minimizing)}
	\end{array}
\]
The second fundamental forms $A$ of $\Sigma \to (M,g)$ and $\bar{A}$ of $\Sigma \to (M,\bar{g})$ (in the directions $\nu$ and $\bar{\nu}$, respectively) satisfy the following relation:
\begin{equation}\label{second_conf}
\bar{A}(X,Y) = u^{-1} \left[ A(X,Y) +\di \ln u(\nu) g(X,Y) \right] \qquad \forall \, X,Y \in TM.
\end{equation}
Taking traces, the corresponding mean curvatures relate as follows:
\[
	\bar H = u \left[ H + (m-1) \di \ln u(\nu) \right].
\]
Whence, the $f$-mean curvature of $\Sigma \to (M, \bar g)$ is
\begin{equation}\label{eq_barHf_H}
	\bar H_f = u \left[ H + (m-1) \di \ln u(\nu) \right] - (m-1)u \di \ln u(\nu) = uH.
\end{equation}
in accordance to \eqref{eq_magic}. Assume now that $\Sigma \to (M,g)$ is a 2-sided minimal immersion. The $f$-stability operator $\bar L_f$ of $\Sigma$, viewed as an $f$-minimal hypersurface in $(M, \bar g)$, is given by
$$
	\bar L_f \doteq \bar{\Delta}_f + \|\bar{A}\|^{2}+\overline{\Ric}_f(\bar{\nu},\bar{\nu}),
$$
where $\|\cdot\|$ is the $\bar g$-norm and 
\[
	\bar \Delta_f \psi \doteq \bar \Delta \psi - \bar g(\bar \nabla f, \bar \nabla \psi) \qquad \forall \psi \in C^2(M)
\] 
(see for example \cite{detang}). From \eqref{second_conf} and the minimality of $\Sigma$ in $(M,g)$, we compute 
\begin{eqnarray*}
	\|\bar{A}\|^{2}&=& u^{2}[|A|^{2}+(m-1)(\di \ln u(\nu))^{2}]\\ &=&u^{2}|A|^{2}+\frac{1}{m-1}(\di f(\bar{\nu}))^{2},
\end{eqnarray*}
hence, using \eqref{eq_link_Q_Ricpsi},
\begin{equation}\label{l_psi_stability}
	\bar L_f = \bar{\Delta}_f +u^{2}|A|^{2} + \overline{\Ric}_f^1(\bar{\nu},\bar{\nu}) = \bar{\Delta}_f +u^{2}\big[|A|^{2} + Q(\nu,\nu)\big].
\end{equation}
We remember that a 2-sided $f$-minimal hypersurface $\Sigma$ is said to be $f$-stable if
\begin{equation}\label{stability_inequality}
	0 \le -\int_{\Sigma}\psi (\bar L_f \psi) \di \bar x_f = \int_\Sigma \Big\{ \|\di \psi\|^2 - u^{2}\big[|A|^{2} + Q(\nu,\nu)\big]\psi^2\Big\} \di \bar x_f, \qquad \forall \, \psi \in C^\infty_c(\Sigma).
\end{equation}
This parallelism allows us to provide a short proof of the following result that was shown by Huang-Martin-Miao \cite{hmm} in the static case.

\begin{proposition}\label{tot_geod}
	Let $\Sigma \to (M,g)$ be a (possibly disconnected) closed, 2-sided, stable minimal  hypersurface in a sub-static manifold $(M^{m},g,u)$. Then $\Sigma$ is totally geodesic and $Q(\nu,\nu) = 0$ along $\Sigma$.
\end{proposition}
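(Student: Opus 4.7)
The plan is to exploit the equivalence, set up in the preceding discussion, between stability of $\Sigma \to (M,g)$ as a minimal hypersurface and $f$-stability of $\Sigma \to (M,\bar g, e^{-f}\di \bar x)$, and to insert the simplest available test function into \eqref{stability_inequality}. Since $\Sigma$ is closed (hence compact without boundary) and, by the remark preceding the statement together with the strong tangency principle, contained in $\mathring M = \{u>0\}$, the constant $\psi \equiv 1$ lies in $C^\infty_c(\Sigma) = C^\infty(\Sigma)$ and $u$ is bounded below by a positive constant along $\Sigma$ by compactness.

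With this choice, the gradient term in \eqref{stability_inequality} vanishes and the inequality collapses to
\[
0 \le -\int_\Sigma u^{2}\bigl[|A|^{2} + Q(\nu,\nu)\bigr] \di \bar x_f.
\]
Next, I would invoke the sub-static condition \eqref{sub_static}, which reads $u Q \ge 0$ and, combined with $u > 0$ on $\Sigma$, gives $Q(\nu,\nu) \ge 0$ pointwise there. Together with $|A|^{2} \ge 0$ and $u^{2} > 0$, the integrand is therefore a non-negative continuous function on $\Sigma$ whose integral against the positive measure $\di \bar x_f$ is $\le 0$; both contributions must therefore vanish identically on $\Sigma$, yielding simultaneously that $\Sigma$ is totally geodesic in $(M,g)$ and that $Q(\nu,\nu) \equiv 0$ along $\Sigma$.

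I expect no serious obstacle: all the non-trivial analytic content has already been encoded in the identities \eqref{eq_magic}, \eqref{second_conf} and \eqref{l_psi_stability}, which recast stability in the optical metric $\bar g$ and display the pointwise non-negative integrand above. The only delicate point worth double-checking is the sign conventions producing \eqref{l_psi_stability}, since it is precisely those signs that make the right-hand side of the test-function computation have a favourable sign; once they are granted, the argument bypasses the more involved Simons-type or direct calculations used in the vacuum-static and electrostatic settings in \cite{hmm}, and extends the conclusion to arbitrary sub-static triples with essentially no extra work.
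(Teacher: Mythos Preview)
Your proposal is correct and follows essentially the same approach as the paper: both arguments insert the constant test function $\psi\equiv 1$ into the $f$-stability inequality \eqref{stability_inequality}, observe that the gradient term drops out, and conclude from the sub-static condition $Q\ge 0$ that the non-negative integrand $u^2[|A|^2+Q(\nu,\nu)]$ must vanish identically. Your write-up simply spells out in more detail why $\psi\equiv 1$ is admissible and why the integrand is pointwise non-negative, whereas the paper compresses this into a single displayed double inequality.
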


\begin{proof}
	Substituting $\psi = 1$ into the stability inequality \eqref{stability_inequality} and using \eqref{l_psi_stability}, we obtain
	\[
		0 \leq -\int_{\Sigma} u^{2} \big[ |A|^{2} + Q(\nu,\nu) \big] \leq 0,
	\]
	which implies $|A|^{2} + Q(\nu,\nu) \equiv 0$.
\end{proof}

Denote by $\varsigma : \Sigma \to (M,g)$ the isometric immersion. We next prove (A) in Theorem \ref{local_split}, a local splitting theorem first obtained in \cite{hmm} in the static case. 

\begin{definition}\label{def_localmin}
	Hereafter, we say that $\varsigma$ is locally area-minimizing if there exists a neighbourhood $V \supset \varsigma(\Sigma)$ such that for every $K \subset \Sigma$ compact and for every variation  $\{\varsigma_t\}$ of $\varsigma$ with image in $V$ and variation vector field supported in $K$, the hypersurfaces $K_t = (K, \varsigma_t^*g)$ satisfy $\vol(K_t) \ge \vol(K)$.
\end{definition}

In the assumptions in (A) of Theorem \ref{local_split}, $\varsigma : \Sigma \to M$ can also be viewed as a locally $f$-area minimizing hypersurface in $(M, \bar{g}, e^{-f} d\bar{x})$. Let $\bar{\nu}$ be a choice of $\bar g$-unit normal field along $\Sigma$, and define
\begin{equation}\label{def_Phi_t}
	\Phi : \Sigma \times (-\eps,\eps) \to M, \qquad \Phi(x, t) = \overline{\exp}_{\varsigma(x)}\big(t \bar{\nu}(x)\big), \qquad \Sigma_t = \Phi(t,\Sigma),
\end{equation}
where $\eps$ is small enough so that $\Phi_t$ is an immersion for each $t$ and $\Sigma_0$ is $f$-area minimizing in $\Phi(\Sigma \times( -\eps, \eps))$. Along this flow, we use \eqref{l_psi_stability} to obtain the following at the point $(x,t)$:
\[
	\frac{\di}{\di t}\bar{H}_{f} = \bar{L}_f 1 = u^{2} \left( |A|^{2} + Q(\nu, \nu) \right) \geq 0.
\]
Therefore, since $\bar{H}_f = 0$ at $t = 0$, we have $\bar{H}_f(\cdot, t) \geq 0$ for all $t \in [0, \varepsilon)$. In particular, we have:
\[
	\overline{\vol}_f(\Sigma_{t}) - \overline{\vol}_f(\Sigma) = \int_0^t \left( - \int_{\Sigma_s} \bar{H}_f(\cdot, s) \, \di \bar{\sigma}_f \right) \di s \leq 0.
\]
This inequality cannot be strict for any $t \in (0, \varepsilon)$ because $\Sigma_0$ is $f$-area minimizing. Hence, $\bar{H}_f(\cdot, t) \equiv 0$ and $\overline{\vol}_f(\Sigma_t) = \overline{\vol}_f(\Sigma)$ for all $t \in [0, \varepsilon)$. The case $t \in (-\eps,0]$ is analogous. Thus, Proposition \ref{tot_geod} implies the following result in terms of the metric $g$:

\begin{proposition}\label{lam_split}
	Consider $(M^m, g, u)$ a sub-static system, and let $\Sigma$ be a (possibly disconnected) locally area-minimizing, closed, 2-sided minimal hypersurface where $u > 0$. Then, the family $\{\Sigma_t\}$ described above is totally geodesic, the area of $\Sigma_t$ is constant, and $Q(\nu_t, \nu_t) = 0$.
\end{proposition}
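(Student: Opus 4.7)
The plan is to extract all three conclusions directly from what has already been established in the discussion preceding the statement, without any additional flow analysis. Recall that $\bar H_f(\cdot, t) \equiv 0$ and $\overline{\vol}_f(\Sigma_t) = \overline{\vol}_f(\Sigma_0)$ for every $t \in (-\eps,\eps)$, together with the pointwise identity
\[
\frac{\di}{\di t}\bar H_f(x,t) = u^2\big(|A_t|^2 + Q(\nu_t,\nu_t)\big)
\]
valid at every point $(x,t)$ along the flow. This last identity rests on each $\Sigma_t$ being $g$-minimal, which is automatic: by \eqref{eq_barHf_H} one has $\bar H_f = uH$, and since $u > 0$ on the compact set $\Sigma_0$, after shrinking $\eps$ one has $u > 0$ throughout the tubular neighbourhood $\Phi(\Sigma \times (-\eps,\eps))$, forcing $H \equiv 0$ on every $\Sigma_t$.

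Because $\bar H_f(\cdot, t)$ is identically zero in $t$, its $t$-derivative vanishes identically, so
\[
u^2\big(|A_t|^2 + Q(\nu_t, \nu_t)\big) \equiv 0 \qquad \text{on } \Sigma_t, \quad \forall \, t \in (-\eps,\eps).
\]
Positivity of $u$ and non-negativity of both summands (the latter by sub-staticity \eqref{sub_static}) force each to vanish separately: $|A_t|^2 \equiv 0$ means precisely that $\Sigma_t$ is totally geodesic in $(M,g)$, while $Q(\nu_t, \nu_t) \equiv 0$ is the third assertion. The area constancy is immediate from \eqref{eq_magic}, which gives $\vol(\Sigma_t) = \overline{\vol}_f(\Sigma_t) = \overline{\vol}_f(\Sigma_0) = \vol(\Sigma_0)$.

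There is essentially no obstacle here, as the heavy lifting has already been done by the conformal correspondence with the ${\rm CD}(0,1)$ setting and the monotonicity of $\overline{\vol}_f$ under the flow. The only subtlety worth noting is the validity of the expression \eqref{l_psi_stability} for $\bar L_f$ at each $\Sigma_t$ rather than just at $\Sigma_0$, since its derivation used the $g$-minimality of the hypersurface in question; but as established above each $\Sigma_t$ is itself a two-sided minimal immersion in $(M,g)$, so the computation leading to \eqref{l_psi_stability} applies verbatim. An alternative route would be to verify that each $\Sigma_t$ is itself $f$-area minimizing in the neighbourhood $V$ of Definition \ref{def_localmin} (for $\eps$ small enough that $\Phi(\Sigma \times(-\eps,\eps)) \subset V$) and then apply Proposition \ref{tot_geod} to each slice, but the direct derivative argument above is more economical.
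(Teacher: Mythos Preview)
Your proof is correct and follows the same approach as the paper, drawing on the discussion preceding the proposition to obtain $\bar H_f(\cdot,t)\equiv 0$ and constancy of $\overline{\vol}_f(\Sigma_t)$. The only difference is the concluding step: the paper invokes Proposition~\ref{tot_geod} applied to each $\Sigma_t$ (each slice being area-minimizing, hence stable), whereas you differentiate the identity $\bar H_f\equiv 0$ directly to force $|A_t|^2+Q(\nu_t,\nu_t)=0$ --- both are valid, and your route has the minor advantage of making explicit (via $\bar H_f = uH$ and $u>0$) why the formula \eqref{l_psi_stability} is available at every $\Sigma_t$, a point the paper leaves implicit.
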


We are ready to prove (A) in Theorem \ref{local_split} in its more general form (see Remark \ref{rem_covering}). For the definition of a minimizer in Almgren's sense, see \cite{almgren76,taylor76}.

\begin{theorem}\label{teo_splitting_1}
	Let $(M^{m}, g, u)$ be a sub-static triple and $\varsigma : \Sigma \to M$ be a closed, connected, 2-sided minimally immersed hypersurface such that $u>0$ on $\Sigma$. Assume that $\Sigma$ is locally area minimizing. Then, 
	\begin{itemize}
		\item[(i)] If $\Sigma$ is embedded, there exists $\eps>0$ and a diffeomorphism $\Phi : (-\eps,\eps) \times \Sigma \to M$ such that, in coordinates $(s,y) \in (-\eps,\eps) \times \Sigma$,
		\[
			\Phi^* g = r^{2}(y)\di s^2 + h^{\Sigma},
		\]
		where $h^\Sigma$ is the induced metric on $\Sigma \to (M,g)$ and $r : \Sigma \to (0,\infty)$. Moreover, there exists a function $\sigma : (-\eps,\eps) \to (0,\infty)$ such that $u(\Phi(s,y)) = r(y)\sigma(s)$.
		\item[(ii)] If $\varsigma(\Sigma)$ is a minimizer in Almgren's sense, then $\varsigma$ factorizes as $\hat\varsigma \circ \pi$ with $\pi : \Sigma \to \hat \Sigma$ a Riemannian covering and $\hat\varsigma : \hat \Sigma \to M$ an embedding. Moreover, $\hat\Sigma$ is locally area-minimizing and thus (i) holds for $\hat\Sigma$. 
	\end{itemize}
\end{theorem}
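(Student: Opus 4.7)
The plan for part (i) is to work in the conformal metric $\bar g = u^{-2} g$ and extract from Proposition \ref{lam_split} a clean Fermi description of a tubular neighborhood of $\Sigma$. Choose $\eps > 0$ small enough that the map $\Phi$ in \eqref{def_Phi_t} is a diffeomorphism of $(-\eps,\eps) \times \Sigma$ onto a tubular neighborhood of $\Sigma$, which is possible since $\Sigma$ is closed and embedded. By Proposition \ref{lam_split}, every leaf $\Sigma_t = \Phi(\{t\} \times \Sigma)$ is totally geodesic in $g$ and satisfies $Q(\nu_t,\nu_t)=0$. Since $t\mapsto\Phi(y,t)$ is a unit-speed $\bar g$-geodesic meeting $\Sigma_0$ $\bar g$-orthogonally, we have $\bar g$-Fermi coordinates on the tubular neighborhood:
$$
\Phi^* \bar g = \di t^2 + \bar h_t,
$$
where $\bar h_t$ is the metric induced on $\Sigma_t$ in $(M,\bar g)$.

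Next I would compute $\bar h_t$ and $u$ along the flow. By \eqref{second_conf} combined with $A_t\equiv 0$, each $\Sigma_t$ is totally umbilic in $\bar g$ with umbilicity factor $\lambda_t = \nu_t(u)$, i.e.\ $\bar A_t = \lambda_t\,\bar h_t$. The standard Fermi evolution then reads $\partial_t \bar h_t = -2\lambda_t\,\bar h_t$, while the identity $\partial_t \Phi = \bar\nu_t = u\,\nu_t$ gives, by the chain rule, $\partial_t u = u\,\lambda_t$ and hence $\partial_t \log u = \lambda_t$. Integrating both ODEs with the initial value $\bar h_0 = (u|_\Sigma)^{-2}\,h^\Sigma$ yields $\bar h_t = u^{-2}\,h^\Sigma$, and therefore
$$
\Phi^* g = u^2\,\di t^2 + h^\Sigma.
$$

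The crux of (i) is then the factorization $u(t,y) = r(y)\,\sigma(t)$, which reduces to proving that $\lambda_t$ does not depend on $y$. Since $Q\ge 0$ and $Q(\nu_t,\nu_t)=0$, Cauchy--Schwarz forces $Q(\nu_t,X)=0$ for every $X\in T\Sigma_t$; plugging into \eqref{sub_static} yields $\Hess u(\nu_t,X) = u\,\Ric(\nu_t,X)$. The Codazzi equation applied to the totally geodesic $\Sigma_t$ gives $R(X,Y,Z,\nu_t)=0$ for all tangent $X,Y,Z$, and contracting leaves $\Ric(\nu_t,X)=0$ for $X\in T\Sigma_t$. Using $\nabla_X \nu_t=0$ on $\Sigma_t$ (from $A_t=0$), I then obtain $X(\lambda_t) = X(\nu_t(u)) = \Hess u(X,\nu_t) = 0$, so $\lambda_t$ depends only on $t$. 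Integrating $\partial_t u = \lambda(t)\,u$ gives $u(t,y) = u_0(y)\,\sigma(t)$ with $u_0=u|_\Sigma$ and $\sigma(t) = \exp\!\big(\int_0^t \lambda(\tau)\,\di\tau\big)$; the substitution $s = \int_0^t \sigma(\tau)\,\di \tau$ rewrites $\Phi^*g$ as $r(y)^2\,\di s^2 + h^\Sigma$ with $r=u_0$, and $u$ as $r(y)\,\tilde\sigma(s)$.

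For part (ii), the plan is to show that all self-intersections of $\varsigma$ are produced by a Riemannian covering. At any $p \in \varsigma(\Sigma)$ with preimages $x_1,\dots,x_k$, each sheet $S_i = \varsigma(U_i)$ near $p$ is a smooth totally geodesic hypersurface by Proposition \ref{tot_geod}. Two such sheets cannot be tangent at $p$ without coinciding as germs, by unique continuation for the minimal surface equation (equivalently, the strong maximum principle for minimal hypersurfaces); they cannot cross transversally at $p$ either, since the standard interior cut-and-paste operation at a transverse crossing would strictly decrease area, contradicting the Almgren minimality of $\varsigma(\Sigma)$. All sheets through $p$ therefore agree as germs, so $\hat\Sigma := \varsigma(\Sigma)$ is a smoothly embedded closed hypersurface, and $\varsigma : \Sigma \to \hat\Sigma$ is a local diffeomorphism, hence a Riemannian covering by compactness of $\Sigma$. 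The Almgren minimality of $\hat\Sigma$ transfers to the local area-minimizing condition of Definition \ref{def_localmin}, and part (i) applied to $\hat\Sigma$ yields the splitting. The hardest step of the argument is precisely this clean exclusion of transverse and tangential self-intersections in the Almgren-minimizing setting.
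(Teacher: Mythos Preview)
Your proof is correct, and for part (i) it takes a genuinely different route at the key step. Up to the identity $\Phi^*g = u^2\di t^2 + h^\Sigma$ your argument and the paper's coincide. For the factorization $u(t,y)=r(y)\sigma(t)$, however, the paper invokes Wylie's \cite[Proposition~2.2]{wylie1} as a black box from the ${\rm CD}(0,1)$ theory, while you supply a direct elementary argument: from $Q\ge 0$ and $Q(\nu_t,\nu_t)=0$ you deduce $Q(\nu_t,\cdot)=0$ by Cauchy--Schwarz; Codazzi on the totally geodesic slices kills $\Ric(\nu_t,X)$ for tangent $X$; the sub-static equation then gives $\Hess u(\nu_t,X)=0$, whence $\nu_t(u)$ is constant along each leaf. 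This is more self-contained and makes the mechanism transparent, at the cost of being specific to this situation; the paper's citation, by contrast, places the result inside a general structural theory.

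For part (ii) your argument is essentially the paper's. The only difference is in how transverse self-intersections are excluded: the paper appeals to the Taylor--Almgren regularity for codimension-one singularities (only triple junctions at $120^\circ$ are admissible), whereas you use the equivalent cut-and-paste area-decrease observation. Both are standard; the paper's citation is sharper, your phrasing is more informal but adequate. The passage from Almgren minimality of $\hat\Sigma$ to local area-minimizing in the sense of Definition~\ref{def_localmin} deserves one more line: the paper notes that any variation $\hat\varsigma_t$ of $\hat\Sigma$ lifts to $\varsigma_t=\hat\varsigma_t\circ\pi$ with $\vol(\Sigma_t)=k\,\vol(\hat\Sigma_t)$, $k$ the degree of the covering.
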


\begin{proof}
	(i). Define $\Phi$, $\Sigma_t$, $\eps$ as in \eqref{def_Phi_t}, and choose $\eps$ so that $\Phi$ is a diffeomorphism onto a tubular neighbourhood of $\Sigma_0$. Proposition \ref{lam_split} asserts that $\Sigma_{t}$ is totally geodesic. The identity \eqref{second_conf} shows that $\Sigma_{t}$ is $\bar{g}$-totally umbilical, with second fundamental form
	\begin{equation}\label{eq_At}
		\bar{A}(X,Y) = \frac{1}{u} \di \ln u(\nu) g(X,Y) = \frac{\di u}{u}(\bar{\nu}) \bar{g}(X,Y).
	\end{equation}
	By the definition of $\Phi$, $\Sigma_{t}$ are the level sets of the $\bar{g}$-distance. Choose local coordinates $\{ y^\alpha\}$ on $\Sigma$ and write $\bar{h}^\Sigma = \bar{h}^\Sigma_{\alpha \beta} \di y^\alpha \otimes \di y^\beta$ for the induced metric on $\Sigma \hookrightarrow (M,\bar g)$. Write also
	\[
		\Phi^*\bar g = \di t^2 + \bar{h}_{\alpha\beta} \di y^\alpha \otimes \di y^\beta.
	\]
	Along the normal flow $\Phi$, we have the known variation formula
	\[
		\frac{\partial}{\partial t} \bar h_{\alpha\beta}(t,y) = -2\bar{A}_{\alpha\beta} = -2\frac{\partial}{\partial t}(\ln u) \bar h_{\alpha\beta}(t,y),
	\]
	and the initial condition $\bar h_{\alpha\beta}(0,y) = \bar h_{\alpha\beta}^{\Sigma}(y)$. Solving this ODE yields 
	\[
		\bar h_{\alpha\beta}(t,y) = \left(\frac{u(t,y)}{u(0,y)}\right)^{-2} \bar h_{\alpha\beta}^{\Sigma}(y),
	\]
	therefore we obtain
	\begin{equation}\label{eq_nice_split}
		\Phi^*\bar{g} = \di t^{2} + \left(\frac{u(t,y)}{u(0,y)}\right)^{-2} \bar h^{\Sigma}_{\alpha\beta}(y) \di y^\alpha \otimes \di y^\beta.
	\end{equation}
	From $\overline{\mathrm{Ric}}_f^1 \geq 0$ and applying a result of Wylie (see \cite[Proposition 2.2]{wylie1}), since $\Sigma$ is connected we have that
	\[
		u(t,y) = r(y)\xi(t).
	\]
	(without loss of generality, $\xi(0)=1$). Therefore, coming back to the metric $g$, we have
	\[
		\Phi^*g = r^2(y)\xi^2(t)\di t^{2} + h^{\Sigma}_{\alpha\beta}(y) \di y^\alpha \otimes \di y^\beta,
	\]
	where $h_\Sigma$ is the induced metric on $\Sigma \hookrightarrow (M,g)$. Choosing 
	\[
		s(t) = \int_0^t \xi(\tau) \di \tau, \qquad \sigma(s) = \xi(t(s)),
	\]
	we conclude the desired expression for $g$.\\[0.2cm]
	(ii) Assume that $\varsigma$ is not an embedding, so by compactness $\varsigma$ is not injective. Pick distinct points $x_1,x_2 \in \Sigma$ with $\varsigma(x_1)=\varsigma(x_2) = p$, and let $\delta < \min\{{\rm inj}(\Sigma),{\rm inj}(p)\}$ such that $\varsigma : B_\delta(x_j) \to M$ is a diffeomorphism onto its image. Up to composing with the inverse of the exponential map of $M$ at $p$, since $\Sigma$ is totally geodesic the images $V_j = \varsigma(B_\delta(x_j))$ are hyperplanes in $B_\delta(p)$. If they are transverse, then $\varsigma(\Sigma)$ cannot be a minimizer in the sense of Almgren. In fact, the local picture around codimension $1$ singularities of such minimizers can only consist of three hypersurfaces meeting at $120$ degrees (see Theorem 8.1 and subsequent discussion in \cite{delellis}). Hence, $V_1 \equiv V_2$ and $\hat \Sigma \doteq \varsigma(\Sigma)$ can therefore be given the structure of a smooth embedded manifold, with local charts $\exp_{x_j}^{-1} \circ \varsigma^{-1} : V_j \to \R^{m-1}$, and we can write $\varsigma = \hat \varsigma \circ \pi$, where $\hat\varsigma : \hat\Sigma \to M$ is the inclusion. The induced metric via $\hat \varsigma$ makes $\pi$ a local isometry. Since $\Sigma$ is compact, Ambrose's theorem implies that $\pi$ is a Riemannian covering. To conclude that $\hat \Sigma$ is locally area minimizing, simply observe that each variation $\hat\varsigma_t$ of $\hat\Sigma$ induces a variation $\varsigma_t = \hat \varsigma_t \circ \pi$ of $\Sigma$, and that $\vol(\Sigma_t) = k \vol(\hat\Sigma_t)$, with $k$ the number of sheets of $\pi$.
\end{proof}

\begin{remark}
	If $M$ is orientable, the assumption that $\varsigma(\Sigma)$ is minimizing in the sense of Almgren can be replaced by the requirement that $\Sigma$ is a mass-minimizing current. Indeed, the singular set of mass minimizing currents has Hausdorff codimension $7$, cf \cite{delellis}.
\end{remark}

In general, the above local splitting theorem cannot be extended to a global one, as shown by the simple example of a right cylinder $[-T,T] \times \mathbb{S}^{n-1}$ with two spherical caps attached to its boundaries in such a way that the resulting manifold has non-negative Ricci curvature (a sub-static manifold with $u \equiv 1$). Global splitting theorems were recently obtained in \cite[Theorems C, 3.7, 3.8 and Corollary 3.9]{fogborg}, and (B) in Theorem \ref{local_split} provides a further result in this direction. We begin by commenting on Definition \ref{def_u_complete}.

\begin{remark}\label{rem_ucomplete}
	Assume that $E$ is isometric to the interior of a manifold with boundary $E^*$. Since, by standard theory, the completeness of a given metric on $E^*$ is equivalent to the request that every diverging curve (i.e. eventually leaving any compact set) has infinite length, Definition \ref{def_u_complete} can be rephrased as the completeness of $(E^*, u^{-2}g)$ and of $(E^*, u^2g)$. By the Hopf-Rinow's theorem, the first condition in Definition \ref{def_u_complete} is also equivalent to say that
	\begin{equation}\label{ucomp_fogborg}
		\lim_{t\to \infty}\rho(\gamma(t))= \infty,
	\end{equation}
	where $\rho$ is the distance from a point in the metric $u^{-2}g$. In the setting of the present remark, our definition of $u$-completeness therefore agrees with the one in \cite[Definition 3.4]{fogborg}.
\end{remark}	

The following is a sufficient condition for the $u$-completeness of $E$, which improves on \cite[Proposition 3.2]{fogborg}

\begin{proposition}\label{prop_suffcond}
Let $(M^m,g,u)$ be a substatic triple and $E\subset M$ is an end. If
\[
	C^{-1}(1 + r)^{-1} \le u \le C( 1+ r) \qquad \text{on } \, E,
\]
for some constant $C>0$, where $r$ is the distance to a fixed compact set of $M$, then $E$ is $u$-complete.
\end{proposition}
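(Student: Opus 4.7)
The plan is to exploit the fact that the distance function $r$ to a fixed compact set of $(M,g)$ is $1$-Lipschitz. Combined with the two-sided bound on $u$, this reduces the verification of both integral conditions in Definition \ref{def_u_complete} to the trivial divergence of $\int_0^\infty (1+t)^{-1}\di t$.

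More precisely, I would fix an arbitrary $g$-unit speed diverging curve $\gamma : [0,\infty) \to \overline{E}$ and set $r_0 = r(\gamma(0))$. Since $r$ is $1$-Lipschitz as a distance function and $\gamma$ is unit speed, one has the pointwise bound $r(\gamma(t)) \le r_0 + t$ for every $t \ge 0$. The assumed inequalities $u \le C(1+r)$ and $u \ge C^{-1}(1+r)^{-1}$ on $E$ then translate into
\[
u^{-1}(\gamma(t)) \ge C^{-1}\bigl(1+r(\gamma(t))\bigr)^{-1} \ge C^{-1}(1+r_0+t)^{-1},
\]
and symmetrically
\[
u(\gamma(t)) \ge C^{-1}\bigl(1+r(\gamma(t))\bigr)^{-1} \ge C^{-1}(1+r_0+t)^{-1}.
\]
Integrating both inequalities over $[0,\infty)$ yields
\[
\int_0^\infty u^{-1}(\gamma(t))\,\di t \ge C^{-1}\int_0^\infty \frac{\di t}{1+r_0+t} = \infty,
\]
and the analogous lower bound for $\int_0^\infty u(\gamma(t))\,\di t$, which is exactly the $u$-completeness condition.

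There is no substantive obstacle: the proof is essentially a one-paragraph calculation, and it shows that the logarithmic two-sided growth prescribed by the hypothesis is the sharp threshold beyond which $u$-completeness is guaranteed by the Lipschitz bound on $r$ alone. The only minor subtlety is that one must take $r$ to be the distance from a \emph{compact} set (rather than from a point) in order for $r$ to be globally defined and $1$-Lipschitz on $M$; this is automatic from the statement. The mild improvement over \cite[Proposition 3.2]{fogborg} should come from the fact that the two integrals in Definition \ref{def_u_complete} are controlled simultaneously by the symmetric two-sided bound, without requiring any separate growth assumption on $\di u$ or pointwise control on the geometry of the end.
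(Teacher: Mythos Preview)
Your proof is correct and follows essentially the same approach as the paper: both arguments bound $r(\gamma(t))$ linearly in $t$ via the $1$-Lipschitz property of the distance function (the paper phrases this as a chain of triangle inequalities through $\partial E$), then use the symmetric two-sided hypothesis to bound both $u$ and $u^{-1}$ below by a constant multiple of $(1+r_0+t)^{-1}$, whose integral diverges. One minor quibble: your remark that compactness of the reference set is needed for $r$ to be globally defined and $1$-Lipschitz is not quite right---the distance to any nonempty subset, even a single point, is already globally defined and $1$-Lipschitz; compactness plays no role in this particular argument.
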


\begin{proof}
	Let $K$ be the compact set in the statement. Suppose that $\gamma : [0,\infty) \to E$ be a unit speed divergent curve in $E$, say starting from a point of $\partial E$. By the triangle inequality, 
	\[
		t \ge \di (\gamma(t),\gamma(0)) \ge \di(\gamma(t), \partial E) \ge r(\gamma(t)) - \di(K, \partial E). 
	\]
	Therefore, since in our assumption both $u(\gamma(t))$ \and $u^{-1}(\gamma(t))$ are larger than $C^{-1}(1+r(\gamma(t)))^{-1}$, and since
	\[
		\int_0^\infty \frac{\di t}{1+r(\gamma(t))} \ge \int_0^\infty \frac{\di t}{1+ t + \di(K, \partial E)} = \infty,
	\]
	the thesis readily follows.
\end{proof}

We are ready to prove (B) in Theorem \ref{local_split}.

\begin{theorem}\label{teo_splitting_2}
	Let $(M^{m}, g, u)$ be a sub-static triple, and assume that  $\Sigma$ is a closed, embedded, 2-sided minimal hypersurface contained in $\Mo$. If an end $E$ with respect to $\Sigma$ is $u$-complete, then:
	\begin{itemize}
		\item[(i)] the topological boundary $\partial E \subset M$ is a single connected component of $\Sigma$, and it locally separates $E$ from $M \backslash E$.
		\item[(ii)] the closure $\overline{E} \subset M$ is an embedded submanifold isometric to 
		\[
			[0,\infty) \times \partial E \qquad \text{with metric} \qquad g = r^{2}(y)\di s^2 + h^{\Sigma},
		\]
		where $h^\Sigma$ is the induced metric on $\Sigma \to (M,g)$ and $r : \Sigma \to (0,\infty)$. Moreover in coordinates $(s,y)\in [0,\infty) \times \partial E$, it holds $u(s,y) = r(y)\sigma(s)$ on $E$ for some function $\sigma : [0,\infty) \to (0,\infty)$.
	\end{itemize}	 
\end{theorem}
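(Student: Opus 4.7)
The plan is to globalize the local splitting obtained in part (A) by extending the $\bar{g}$-normal exponential flow from a connected component of $\partial E$ over the whole of $\overline{E}$, leveraging the $u$-completeness hypothesis and the rigidity of the CD(0,1) comparison of Wylie. I would work in the conformal setting $(M, \bar{g}, e^{-f}\di\bar{x})$ with $\bar{g} = u^{-2}g$ and $f = -(m-1)\ln u$ as in (A), so that $\overline{\Ric}_f^1 = Q \geq 0$ and $\Sigma$ is $f$-stable $f$-minimal in $(M, \bar{g})$. By Proposition \ref{tot_geod}, $\Sigma$ is then $g$-totally geodesic with $Q(\nu,\nu) \equiv 0$ along $\Sigma$, and by Remark \ref{rem_ucomplete}, $u$-completeness of $E$ is equivalent to the $\bar{g}$-metric completeness of $\overline{E}$.

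Fix any connected component $\Sigma^{*}\subseteq\partial E$ and consider the $\bar{g}$-normal exponential map
\[
\Phi:\Sigma^{*}\times[0,\infty)\to\overline{E}, \qquad \Phi(y,t)=\overline{\exp}_{y}(t\bar{\nu}(y)),
\]
with $\bar{\nu}$ pointing into $E$; $\bar{g}$-completeness of $\overline{E}$ ensures that $\Phi$ is defined on the whole cylinder. Let $T_{\max}\in(0,\infty]$ be the supremum of times $t$ for which $\Phi$ restricts to a diffeomorphism from $\Sigma^{*}\times[0,t)$ onto a tubular neighborhood of $\Sigma^{*}$ in $\overline{E}$; part (A) gives $T_{\max}>0$.

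The crucial step is to prove $T_{\max}=\infty$. Along the flow, the Riccati equation for the shape operator $\bar{A}_{t}$ of the leaves $\Sigma^{*}_{t}=\Phi(\Sigma^{*},t)$, combined with $\overline{\Ric}_f^1\geq 0$, the initial $f$-minimality of $\Sigma^{*}$, and the initial umbilic relation $\bar{A}_{0}=(\di\ln u)(\bar{\nu})\,\bar{g}|_{\Sigma^{*}}$, is expected to keep $\bar{A}_{t}$ umbilic and its norm under control for every $t<T_{\max}$. This rules out focal points, so the set of good times is both open and closed in $[0,\infty)$, forcing $T_{\max}=\infty$. The image $\Phi(\Sigma^{*}\times[0,\infty))\subseteq\overline{E}$ is then open (a union of tubular neighborhoods) and, by $\bar{g}$-completeness together with the level-set description $\Sigma^{*}_{t}=\{y\in\overline{E}:\mathrm{dist}_{\bar{g}}(y,\Sigma^{*})=t\}$, also closed, hence coincides with $\overline{E}$ by connectedness; injectivity follows from the same level-set description. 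This proves (i): no further component of $\Sigma$ can fit inside $\overline{E}$, so $\partial E=\Sigma^{*}$ is connected.

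For (ii), the pullback $\Phi^{*}\bar{g}$ has the form of \eqref{eq_nice_split} on all of $\Sigma^{*}\times[0,\infty)$. Wylie's Proposition 2.2 then factorizes $u(t,y)=r(y)\xi(t)$ globally, and reverting to $g=u^{2}\bar{g}$ yields $\Phi^{*}g=r(y)^{2}\xi(t)^{2}\di t^{2}+h^{\Sigma^{*}}_{\alpha\beta}(y)\di y^{\alpha}\otimes\di y^{\beta}$; the change of variable $s=\int_{0}^{t}\xi(\tau)\di\tau$, $\sigma(s)=\xi(t(s))$ then produces the splitting claimed in (ii). The main obstacle is the preservation of the umbilic structure of the leaves $\Sigma^{*}_{t}$ for all $t<T_{\max}$ on the sole assumption of $u$-completeness of $E$: while the initial data on $\Sigma^{*}$ is handled by stability via Proposition \ref{tot_geod}, ruling out focal points along the whole normal flow requires a careful application of Wylie's CD(0,1) Riccati comparison, and it is precisely here that the stability hypothesis on $\Sigma$ seems indispensable.
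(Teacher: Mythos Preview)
Your plan has a genuine gap at its core. You assert that the Riccati equation, together with $\overline{\Ric}_f^1\ge 0$ and the umbilic initial data on $\Sigma^*$, ``is expected to keep $\bar A_t$ umbilic and its norm under control'' for all $t<T_{\max}$. But the Riccati inequality in the CD$(0,1)$ setting only yields the \emph{one-sided} bound $\bar\Delta_f\rho\le 0$ (equivalently $\bar H_f\le 0$) along each normal geodesic; it says nothing about the full shape operator, and in particular does not preserve umbilicity or preclude a single eigenvalue of $\bar A_t$ from blowing up while the trace stays bounded. Umbilicity of the leaves is the \emph{rigidity} case of the comparison, and rigidity must be \emph{proved}, not assumed. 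In the paper this is achieved by introducing a Busemann function $b_\gamma$ along a $\bar g$-ray in $E$, showing $\bar\Delta_f b_\gamma\le 0$ in the barrier sense, and then applying the strong maximum principle to $w=\rho+b_\gamma\ge 0$ (with equality on the ray) to force $\bar\Delta_f\rho\equiv 0$. Only after this does the weighted Bochner formula give $\overline{\Hess}\,\rho$ proportional to $\bar h$, i.e.\ umbilic level sets. Your continuity argument skips exactly this step, and without it there is no mechanism ruling out focal points.

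A related symptom is that your argument uses only the $\bar g$-completeness of $\overline E$, i.e.\ only the first half of $u$-completeness. The second condition $\int_0^\infty u(\gamma(t))\,\di t=\infty$ is what the paper uses to show that the reparametrized length $s_t(x)=\int_0^{\rho_t(x)} u^2(\sigma_t)\,\di\tau$ diverges, which is precisely what makes $\bar\Delta_f b_\gamma\le 0$ hold (cf.\ \eqref{eq_point}). Since your outline never invokes this second condition, it cannot be correct as stated. Finally, note that you cannot appeal to part (A) for anything beyond the trivial fact that $T_{\max}>0$: (A) assumes $\Sigma$ is locally area-minimizing, whereas here $\Sigma$ is only stable, so the local splitting of (A) is not available a priori.
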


\begin{proof}	
	The proof adapts the methods in \cite{kasue,wylie1,fogborg}. 
	In our assumptions, setting $\bar g = u^{-2}g$ we have that $(\bar{E},\bar g)$ is a complete metric space but in principle $(\mathring{M}, \bar g)$ may not be complete. Moreover, a component of the topological boundary $\partial E \subset M$ may not separate $E$ from $M \backslash E$. For this reason, we consider the metric completion $E^*$ of $(E,\bar g)$. Since $E$ is $u$-complete, $E^*$ is a manifold with boundary and $\partial E^*$ is isometric to the union of some of the connected components of $\Sigma$, possibly repeated twice. Indeed, components of $\Sigma$ that do not locally separate $E$ from $M \backslash E$ appear twice as components of $\partial E^*$.
	
	For convenience, we enlarge $E^*$ to a boundaryless manifold $V$ by gluing a collar neighbourhood $T \approx (0,\delta] \times \partial E^*$ of $\partial E^*$, and smoothly extend $\bar g$, $u$ to the entire $V$ in such a way that $u>0$ on $V$ and $(V,\bar g)$ is a complete manifold. Hereafter, we identify $E$ and $E^*$ as subsets of $V$. A $\bar g$-minimizing unit speed Lipschitz curve in $(V,\bar g)$ will be called a $\bar g$-segment.
	
	Let $\Gamma \subset E^*$ be either a point $p \in E$ or $\partial E^*$. We shall apply the weighted laplacian comparison theorems in \cite{wylie1,wylie} to the $\bar{g}$-distance to $\Gamma$ in suitable subsets of $(V,\bar g)$: 
	\[
		\rho : V \to \tcb{[}0,\infty), \quad \rho(x) \doteq \di_{\bar{g}}(x,\Gamma)
	\]
	
	However, a preliminary analysis of the regularity of $\rho$ is needed. \\[0.2cm]
	\noindent \textbf{Step 1}: regularity of $\rho$.\\[0.2cm]
	If $\Gamma = \partial E^*$, things are well-known: $\rho$ is smooth in a open subset $U \subset E^*$ (with the induced topology) containing $\partial E^*$, where the normal exponential map from $\partial E^*$ realizes a diffeomorphism; moreover, $U$ contains the interior of any segment from $\partial E^*$ to points in $E$, and $V \backslash U= {\rm cut}(\Gamma) \cap E$. 
		
	If $\Gamma = \{p\}$, Let $\overline{\exp}_p$ be the exponential map in $(V,\bar g)$ from $p \in E$. For each vector $v \in T_{p}V$, define $\gamma_v(t) = \overline{\exp}_p(tv)$ whenever the latter is defined, and following the notation in \cite[Section 5.7.3]{petersen} set
	\[
	\begin{array}{l}
	\seg_E(p) = \left\{ v \in T_pV : \begin{array}{l}
		\gamma_v([0,1)) \subset E \ \text{ and }\\
		\text{$\gamma_v$ is $\bar g$-minimizing on $[0,1]$}
		\end{array}\right\} \\[0.3cm]
	\seg_E^0(p) = \Big\{ tv  : v \in \seg_E(p), \ t \in [0,1)\Big\} 
	\end{array}	
	\]
	We observe, in particular, that for a vector $v\in\seg_E(p)$ the $\bar g$-geodesic $\gamma_v(t)$ does not touch $\partial E$ before time $t=1$. Whence, defining
		\[
		R(p) = \di_{\bar g}(p,\partial E^*), 
		\]
	and denoting by $B^{\bar g}_R$ metric balls in $(V, \bar g)$, it holds
	\[
		B^{\bar g}_{R(p)}(p) \subset \overline{\exp}_p\big(\seg_E(p)\big) , \qquad  \overline{\exp}_p\big(\seg_E^0(p)\big) \subset E.
	\] 
	Minor adaptations of \cite[Section 5.7.3]{petersen} enable to prove the following:
	\begin{itemize}
		\item[-] $\seg_E^0(p)$ is open and $\overline{\exp}_p$ is a diffeomorphism between $\seg_E^0(p)$ and its image $U \subset E$, the open set we search for. By construction, $\rho(x) = \|\overline{\exp}_p^{-1}(x)\|_{\bar g}$ is smooth on $U\backslash \{p\}$;
		\item[-] If $p,q \in E$, then $q\in\overline{\exp}_p(\seg_E^0(p))$ if and only if $p\in\overline{\exp}_q(\seg_E^0(q))$.
	\end{itemize}
	
	\vspace{0.2cm}
	\noindent \textbf{Step 2}: $f$-Laplacian comparison for $\rho$.\\[0.2cm]
	Suppose that $x \in U$, and let $\gamma : [0,\rho(x)] \to V$ be a $\bar g$-segment joining $\Gamma$ to $x$. By the construction of $U$, $\gamma((0,\rho(x)]) \subset U$ thus $\rho$ is smooth therein. Writing 
	\[
		\bar H(t) \doteq \bar \Delta\rho (\gamma(t)),
	\]
	it is known that $\bar H$ is the mean curvature of the level set $\{\rho = t\}$ at the point $\gamma(t)$ in direction $-\bar \nabla \rho = -\gamma'$, and that along $\gamma$ the following Riccati inequality holds (see \cite[Proposition 7.1.1]{petersen}):
	$$
		\bar{H}'+\frac{\bar{H}^2}{m-1}+ \overline{\Ric} (\gamma',\gamma') \le 0,
	$$
	where $'$ means derivative with respect to the $\bar g$-arclength $t$. The sub-static condition implies
	$$
		\bar{H}'+\frac{\bar{H}^{2}}{m-1}- \overline{\Hess} f(\gamma',\gamma')-\frac{1}{m-1}[(f \circ \gamma)']^{2} \leq 0.
	$$
	Observing that
	$$
		\bar{H}_f = \bar{H} -(f \circ \gamma)', \qquad \overline{\Hess} f(\gamma',\gamma') = (f \circ \gamma)''
	$$
	we have (leaving implicit the restriction to $\gamma$ for the ease of notation)
	\begin{eqnarray*}
		\bar{H}_f'=\bar{H}'-f''&\leq&-\frac{\bar{H}^{2}}{m-1}+f''+\frac{1}{m-1}(f')^{2}-f''\\
		&=&-\frac{1}{m-1}\left[\bar{H}_f+ f'\right]^{2}+\frac{1}{m-1}(f')^{2}\\
		&=&-\frac{\bar{H}_f^{2}}{m-1}-\frac{2}{m-1}f'\bar{H}_f.
	\end{eqnarray*}
	This implies that the function 
	\[
		\lambda(t)=e^{\frac{2f(\gamma(t))}{m-1}}\bar{H}_f(\gamma(t))
	\] 
	solves
	$$
		\frac{\di \lambda}{\di t} \leq- e^{-\frac{2f}{m-1}}\frac{\lambda^{2}}{m-1}.
	$$
	Changing variables according to 
	\[
		s(t) = \int_0^t e^{-\frac{2f(\gamma(\tau))}{m-1}}\di \tau = \int_0^t u^2(\gamma(\tau)) \di \tau
	\] 
	we rewrite the above inequality as
	\[
		\frac{\di \lambda}{\di s} \leq -\frac{\lambda^{2}(s)}{m-1}.
	\]
	One may now apply the Riccati comparison \cite{eschenburg} in two different cases:
	
	\begin{itemize}
		\item[-] if $\Gamma = \{p\}$, then from $\bar H(\gamma(t)) \sim \frac{m-1}{t}$ as $t \to 0$ we get
		\[
		\lambda(s) \sim \frac{m-1}{s} \qquad \text{as } \, s \to 0.
		\]
		Then, Riccati comparison gives 
		\[
		\lambda(s) \le \frac{m-1}{s}, \qquad \text{namely} \qquad \bar \Delta_f \rho (\gamma(t(s)) \le e^{\frac{-2f(\gamma(t(s)))}{m-1}} \frac{m-1}{s} 
		\]
		and, because of our choice of $f$, at the point $x$ it holds
		\begin{equation}\label{eq_point}
			\bar \Delta_f \rho (x) \le u(x)^2 \frac{m-1}{s(\rho(x))}
		\end{equation}
		\item[-] If $\Gamma$ is a hypersurface, denote by $o = \gamma(0)$. From 
		\[
		\lambda(0) = e^{\frac{2f(o)}{m-1}}\bar H_f(o), 
		\]
		Riccati comparison (see \cite[Theorem 3.8]{bmpr}) gives $s < s^* \doteq \frac{m-1}{-\lambda(0)}$ if $\lambda(0)<0$ and, regardless the sign of $\lambda(0)$,
		\[
		\lambda(s) \le \frac{\lambda(0)}{1+ \frac{\lambda(0)}{m-1}s} \le \lambda(0) \qquad \text{on } \, [0,s^*).
		\]
		The inequality rewrites as 
		\[
		\bar \Delta_f \rho (x) \le  e^{\frac{2[f(o)-f(x)]}{m-1}}\bar H_f(o) = \left( \frac{u(x)}{u(o)}\right)^2 \bar H_f(o),
		\]
		where we used our choice of $f$. In our assumptions, from \eqref{eq_barHf_H} and the minimality of $\Sigma$ in $(M,g)$ we deduce that $\Gamma = \partial E^*$ satisfies $\bar H_f = 0$. Therefore, 
		\begin{equation}\label{eq_Lapla_ipers}
			\bar \Delta_f \rho (x) \le \left( \frac{u(x)}{u(o)}\right)^2 \bar H_f(o) = 0.
		\end{equation}
	\end{itemize}
	
	\bigskip
	
	We shall now prove that \eqref{eq_point} and \eqref{eq_Lapla_ipers} hold in the barrier sense in the set
	\[
	E_\Gamma = \begin{cases}
		B^{\bar g}_{R(p)}(p)\backslash \{p\} & \quad \text{if } \, \Gamma = \{p\}, \\
		E & \quad \text{if } \, \Gamma = \partial E^\ast.
	\end{cases}
	\]
	It is enough to check the inequalities at points $x \in E_\Gamma \backslash U$. If $\Gamma=\{p\}$, by Calabi's trick, we can consider $\varepsilon>0$ and the function 
	$$
		\rho_{\varepsilon}\doteq\varepsilon+ \di_{\bar{g}}(\cdot,\gamma(\varepsilon)),
	$$
	which satisfies $\rho_{\varepsilon}\geq\rho$ in $M$ with equality at $x$. The smoothness of $\rho_\eps$ near $x$ follows since $\gamma(\eps) \in \overline{\exp}_x(\seg_E^0(x))$, which yields   $x\in\overline{\exp}_{\gamma(\eps)}(\seg_E^0(\gamma(\eps)))$ by Step 1. Applying Step 2 to $\di_{\bar{g}}(\cdot,\gamma(\varepsilon))$ we infer
	\[
		\bar \Delta_f \rho_\eps (x) \le u(x)^2 \frac{m-1}{s_\eps(\rho(x)-\eps)},
	\]	
	where 
	\[
	s_\eps(t) \doteq \int_0^t u^2(\gamma(\tau+\eps)) \di \tau = \int_\eps^{t+\eps} u^2(\gamma(\tau)) \di \tau
	\]
	and by the continuity of $s_\eps$ in $\eps$ we deduce our desired conclusion.
	
	Assume next that $\Gamma = \partial E^*$. Since $\partial E^*$ separates $V$, for $\varepsilon >0$ we can consider a compact hypersurface ${\Gamma}_{\varepsilon}\subset V\backslash E$ touching $\partial E^*$ at $o$ and whose second fundamental form $\bar{A}_{\varepsilon}$ in direction $-\gamma'(0)$ satisfies, at $o$,
	$$\bar{A}_{\varepsilon}=\bar{A}+\varepsilon\bar{g}.$$
	An explicit construction can be found in \cite[Lemma 2.1]{glmm}. Define
	$$
		\rho_{\varepsilon}= \di_{\bar{g}}(\cdot,\Gamma_{\varepsilon}).
	$$
	The separating property of $\partial E^*$ ensures that $\rho_{\varepsilon}\geq \rho$ in $E$, with equality at $x$. Also, $\rho_{\varepsilon}$ is smooth in a neighbourhood of $x$, as shown in \cite{glmm}. The computation leading to \eqref{eq_Lapla_ipers} applied to $\rho_{\varepsilon}$, guarantees that
	$$
		\bar \Delta_f \rho_{\varepsilon} (x) \le \left( \frac{u(x)}{u(o)}\right)^2 \left(\bar H_f(o)+ (m-1)\varepsilon\right),
	$$
	which shows that \eqref{eq_Lapla_ipers} holds in the barrier sense on $E$.  Observe  that the computation does not depend on the way $\bar g,u$ are extended on $V \backslash E^*$.\\[0.2cm]
		
	\noindent \textbf{Step 3}: Busemann functions and splitting.\\[0.2cm]
	Take a divergent sequence $x_j \in E$, $x_j \to \infty$ and $\bar g$-segments $\gamma_{j}:[0,t_{j}]\to V$ from $\partial E^*$ to $x_j$. By minimality, $\gamma_{j}((0,t_{j}])\subset E$.  Since $\partial E^*$ is compact the sequence subconverges to a $\bar g$-ray $\gamma : [0,\infty) \to E^*$ issuing orthogonally from $\partial E^*$, and satisfying $\gamma((0,\infty)) \subset E$. We consider the associated Busemann function $b_\gamma$ defined by
	$$
		b_\gamma(x) \doteq \lim_{t\to\infty}(\di_{\bar{g}}(x,\gamma(t))-t),
	$$
	for all $x\in V$. Observe that $\{\di_{\bar{g}}(\,\cdot\,,\gamma(t))-t\}$ is decreasing in $t$, whence
	\[
		\{ b_\gamma < 0\} = \bigcup_{t>0} B^{\bar g}_t(\gamma(t)) \qquad \text{and it is connected.}
	\]
	By the construction of $\gamma$, $B^{\bar g}_t(\gamma(t)) \subset E$ for all $t>0$, hence $\{b_\gamma < 0\} \subset E$. We shall prove that
	\begin{equation} \label{eq_lapla_buse}
		\bar\Delta_f b_\gamma \leq 0 \qquad \text{in the barrier sense on } \, \{b_\gamma < 0\}.
	\end{equation}
	Let us fix $x\in \{b_\gamma < 0\}$ and let $t_0 > 0$ be large enough so that $x \in B^{\bar g}_t(\gamma(t))$ for all $t>t_0$. For any fixed $t>t_0$, pick a $\bar g$-segment $\sigma_t : [0,\di_{\bar g}(x,\gamma(t))] \to \overline{B^{\bar g}_t(\gamma(t))} \subset E^*$ from $x$ to $\gamma(t)$. The family $\sigma_{t}$ subconverges as $t\to\infty$ to a $\bar{g}$-ray $\sigma:[0,\infty)\longrightarrow E^*$, which we claim to be contained in $E$. Indeed, as an application of the triangle inequality the Busemann function $b_\sigma$ associated to $\sigma$ satisfies
	\begin{equation} \label{bgammasigma}
		b_\gamma \leq b_\gamma(x) + b_\sigma
	\end{equation}
	with equality at $x$, see \cite[Proposition 7.3.8]{petersen}. Hence
	\[
		\{b_\sigma \leq 0\} \subset \{b_\gamma < 0\} \subset E
	\]
	The conclusion follows since $\sigma([0,\infty)) \subset \{b_\sigma \leq 0\}$ by the very definition of $b_\sigma$. By \eqref{bgammasigma}, the family of functions $b_{\sigma,t}(\cdot) \doteq b_\gamma(x) + \di_{\bar g}(\cdot, \sigma(t)) - t$ are barriers at $x$. Moreover, since $B^{\bar g}_t(\sigma(t)) \subset \{b_\sigma < 0\} \subset E$, by Step 1 the functions $\rho_t = \di_{\bar g}(\cdot,\sigma(t))$ and thus $b_{\sigma,t}$ are smooth near $x$. Setting
	\[
		s_{t}(x) \doteq \int_{0}^{\rho_t(x)} u^2(\sigma(t-\tau)) \di \tau = \int_0^t u^2(\sigma(\tau)) \di\tau,
	\]
	Step 2 gives
	\begin{equation}\label{lapl_est2}
		(\bar{\Delta}_f b_{\sigma,t})(x) = (\bar{\Delta}_f\rho_{t})(x)\leq u(x)^2 \frac{m-1}{s_{t}(x)}. 
	\end{equation}
	Note that $s_t(x) \to \infty$ as $t\to\infty$ by $u$-completeness of $E$, as $\sigma$ is a divergent curve in $E$. This proves \eqref{eq_lapla_buse}.
	
	Consider the distance $\rho$ to $\partial E^*$ and the function
	\[
		w\doteq \rho + b_\gamma \qquad \text{in } \, E. 
	\]
	We claim that 
	\begin{equation}\label{eq_smp_w}
		w \ge 0 \quad \text{in } \, E, \qquad w \equiv 0 \quad \text{along } \, \gamma. 
	\end{equation}
	First, $w(\gamma(t)) = t - t = 0$ for each $t \ge 0$. On the other hand, if $x \in E$, pick a $\bar g$-segment from $\partial E^*$ to $x$ and let $o \in \partial E^*$ be its initial point, and then a segment from $x$ to $\gamma(t)$. We compute
	\[
		\rho(x) + \rho_{\gamma(t)}(x) = \di_{\bar g}(x,o) + \di_{\bar g}(x,\gamma(t)) \ge \di_{\bar g}(o,\gamma(t)) \ge \di_{\bar g}(\partial E^*,\gamma(t)) = t,
	\]
	whence $\rho(x) + (\rho_{\gamma(t)}(x)-t) \ge 0$. The thesis follows by letting $t \to \infty$. By using \eqref{eq_Lapla_ipers} and \eqref{eq_lapla_buse} we get 
	\[
		\bar\Delta_f w \le 0 \qquad \text{in the barrier sense on } \, \{b_\gamma < 0\},
	\]
	thus from \eqref{eq_smp_w} and since $\gamma((0,\infty)) \subset \{b_\gamma<0\}$ we infer $w \equiv 0$ on $\{b_\gamma<0\}$ by the strong maximum principle in \cite{calabi} (see also \cite[Theorem 7.1.7]{petersen}). On the other hand, by the very definition of $w$ the set $\{w = 0\} \cap E$ is  closed in $E$ and contained in $\{b_\gamma<0\}$, whence by the above $\{b_\gamma<0\} \equiv \{w = 0\} \cap E$ is both open and closed in $E$. Being $E$ and $\{b_\gamma<0\}$ connected, it holds $E \equiv \{b_\gamma < 0\}$ and thus $w \equiv 0$ on the entire $E$. In particular,
	$$
		\bar{\Delta}_{f}\rho=\bar{\Delta}_{f}b_\gamma =0 \qquad \text{in the barrier sense on } \, E.
	$$ 
	and $\rho,b_\gamma$ are smooth\footnote{Indeed, solutions in the barrier sense of $\bar \Delta_f u = 0$ also solve the equation in the viscosity sense and weak sense (see \cite{mantemasceural}, \cite{ishii}), thus regularity follows by standard elliptic theory.}. 
	To see that $\bar{\nabla}\rho$ is the splitting direction, and the corresponding metric is warped as in the statement, we proceed as in \cite[Theorem C]{fogborg} (see also \cite[Lemma 3.5]{wylie1}). We consider the normal exponential map
	\[
		\Phi :[0,\infty) \times \partial E^* \longrightarrow (E^*,\bar{g}), \qquad \Phi(t,y) = \overline{\exp}_{y}(t\bar{\nu}(y)),
	\]
	and let 
	\[
	T = \sup \left\{ t>0 \ : \ (\Phi_t)_* \ \text{ is nonsingular on $[0,t) \times \partial E$}\right\}.
	\]
	As before, we write
	\[
	\Phi^*\bar{g} = \di t^2 + \bar{h}_{\alpha\beta} \di y^\alpha \otimes \di y^\beta \qquad \text{on } \, [0,T) \times \partial E.
	\]
	The weighted Bochner formula applied to $\bar{\nabla} \rho$ and the identity
	\[
	0 = \bar{\Delta}_f \rho = \bar \Delta \rho - \bar{g}(\bar{\nabla} f, \bar{\nabla} \rho)
	\]
	imply that
	\[
	0 = \frac{1}{2}\bar{\Delta}_{f} \|\bar{\nabla}\rho\|^{2} \geq \|\overline{\Hess}\rho \|^{2} - \frac{1}{m-1} \bar{g}(\bar{\nabla} f, \bar{\nabla} \rho)^{2} \ge 0,
	\]
	(the last inequality is Newton's one, once we recall that $\overline{\Hess} \rho(\bar \nabla \rho,\cdot) = 0$). Hence,
	\[
	\overline{\Hess} \rho = \frac{\bar{g}(\bar{\nabla} f, \bar{\nabla} \rho)}{m-1} \bar{h} = - \bar{g}(\bar{\nabla} (\ln u), \bar{\nabla} \rho) \bar{h} \qquad \text{on } \, \bar \nabla \rho^\perp \times \bar \nabla \rho^\perp.
	\]
	The level sets of $\rho$ are therefore totally umbilic, and the identities 
	\[
	\Phi^*\bar{g} = \di t^{2} + \left(\frac{u(t,y)}{u(0,y)}\right)^{-2} \bar h^{\Sigma}, \qquad u(t,y) = r(y)\xi(t) \qquad \text{on } \, [0,T) \times \partial E.
	\]
	follow as in the proof of Theorem \ref{teo_splitting_1}. If $T < \infty$, we thus observe that $\Phi_t^* \bar g$ is non-degenerate on $\{t = T\}$, contradicting the definition of $T$. Thus, $T = \infty$ and $\Phi$ is a local diffeomorphism on $[0,\infty) \times \partial E^*$.
	
	In order to prove that $\Phi$ is bijective, the surjectivity easily follows by the completeness of $(V,\bar g)$. Regarding injectivity, if $\Phi(t_1, y_1) = \Phi(t_2, y_2)$, we can apply $\rho$ to both sides and conclude that $t_1 = t_2 = t$. Moreover, since $\Phi_t: \left\{0\right\} \times \partial E^* \longrightarrow \left\{t\right\} \times \partial E$ is a diffeomorphism, being the flow map of the smooth vector field $\bar \nabla \rho$, we conclude from $\Phi(t, y_1) = \Phi(t, y_2)$ that $y_1 = y_2$. \\
	Having proved that $\Phi$ is an isometry, since $E^*$ is connected then $\partial E^*$ is connected. Hence, the topological boundary of $E$ is connected and separates $E$ from $M \backslash E$. Therefore, the closure $\bar{E}$ in $M$ is a manifold with boundary, isometric to $E^*$. This concludes the proofs of (i) and (ii). 	
\end{proof}

\section{A Boucher-Gibbons-Horowitz-type inequality for sub-static manifolds}

In this section we prove Theorem \ref{thm_BGH}. Let $(M^m,g,u)$ be a sub-static triple, and assume that $Q$ extends continuously to $\partial M$. This is equivalent to require that 
\[
\dfrac{\Hess u}{u} \text{ extends continously up to } \partial M.
\]  
\begin{remark}
In particular, since \( \partial M = u^{-1}(0) \), the boundary \( \partial M \) is a totally geodesic hypersurface, and \( |\nabla u| \) is constant along \( \partial M \). If \( \Sigma_1, \dots, \Sigma_k \) are the connected components of \( \partial M \), the positive constants \( \kappa_i = |\nabla u|_{\Sigma_i} > 0 \), for \( i = 1, \dots, k \), are called the surface gravities.
\end{remark}
By taking the trace of the equation above, we obtain
\[
uS = u \, \trace Q - (m-1) \Delta u.
\]

\begin{lemma}
	In the above assumptions, we have
	\begin{equation} \label{Qdu}
		2 Q(\nabla u,\,\cdot\,) = u (\di S - 2 \div Q) \qquad \text{in } \, \mathring{M} \, .
	\end{equation}
	In particular, if $S\in C^1(M)$ and $\div Q\in C^0(M)$ then
	\begin{equation} \label{Qdu0}
		Q(\nabla u,\,\cdot\,) = 0 \qquad \text{on } \, \partial M \, .
	\end{equation}
\end{lemma}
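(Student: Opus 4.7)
The plan is to derive \eqref{Qdu} by taking a divergence of the defining identity of $Q$, in the spirit of the Kazdan--Warner-type identities. Rewriting the sub-static equation in components as
\[
uQ_{ij} = u R_{ij} - u_{ij} + (\Delta u)\, g_{ij} \qquad \text{on } \mathring{M},
\]
I would apply $\nabla^j$ to both sides. By the product rule the left-hand side produces $Q(\nabla u,\cdot)_i + u(\div Q)_i$. On the right-hand side three standard ingredients come into play: the contracted second Bianchi identity $\nabla^j R_{ij} = \tfrac{1}{2}\nabla_i S$; the Ricci commutation formula, which combined with the symmetry of the Hessian of a function yields
\[
\nabla^j u_{ij} \;=\; \nabla^j \nabla_i \nabla_j u \;=\; \nabla_i(\Delta u) + R_i{}^{j}\nabla_j u ;
\]
and the trivial divergence $\nabla^j\bigl((\Delta u)g_{ij}\bigr) = \nabla_i \Delta u$. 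The expected outcome is a pairwise cancellation: the two copies of $\Ric(\nabla u,\cdot)_i$ (one coming from the Ricci term, one produced by the commutator applied to $\Hess u$) cancel, as do the two copies of $\nabla_i \Delta u$, leaving only $\tfrac{u}{2}\nabla_i S$. Rearranging gives \eqref{Qdu}.

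For \eqref{Qdu0} I would simply pass \eqref{Qdu} to the boundary. Under the additional assumptions $S \in C^1(M)$ and $\div Q \in C^0(M)$, the right-hand side of \eqref{Qdu} extends continuously to all of $M$ and, being of the form $u \cdot (\text{continuous})$, vanishes on $\partial M = \{u = 0\}$. The left-hand side is continuous up to $\partial M$ by the standing hypothesis that $Q$ extends continuously there and since $\nabla u$ is smooth. The equality then passes to the limit.

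I do not expect any genuine obstacle in this argument: the only delicate point is the bookkeeping of signs and cancellations in the divergence computation, which is entirely algebraic once the two commutation identities above are in hand. The continuous extension of $Q$ up to $\partial M$ is used only in the last step, to make sense of \eqref{Qdu0} pointwise on $\partial M$; the identity \eqref{Qdu} itself is a pointwise statement on $\mathring{M}$ that relies solely on the smoothness of the sub-static structure there.
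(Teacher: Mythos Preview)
Your proposal is correct and is essentially the same proof as the paper's: both take the divergence of $uQ$, invoke the contracted Bianchi identity $\div\Ric=\tfrac12\di S$ and the commutation identity $\div\Hess u=\di\Delta u+\Ric(\nabla u,\cdot)$, and observe the resulting cancellations to obtain $\div(uQ)=\tfrac{u}{2}\di S$; the boundary statement is then deduced exactly as you indicate, by continuity of the right-hand side together with $u=0$ on $\partial M$. The only cosmetic difference is that the paper works in invariant notation while you work in indices.
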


\begin{remark}
	Clearly, \eqref{Qdu0} still holds if one requires only that $(\di S - 2\div Q)$ remains bounded in a neighbourhood of $\partial M$. Furthermore, \eqref{Qdu} can be equivalently expressed as
	\begin{equation} \label{Qdu1}
		(m-1) \di\frac{\Delta u}{u} = -\frac{2}{u} Q(\nabla u,\,\cdot\,) - 2\div Q + \di\trace Q \qquad \text{in } \, \mathring{M}.
	\end{equation}
\end{remark}

\begin{proof}
	Recalling Schur's and Ricci's identities
	\begin{align*}
		\div \Ric & = \frac{1}{2} \di S \\
		\Ric(\nabla u,\,\cdot\,) & = \div\Hess u - \di \Delta u \\
		& \equiv \div(\Hess u - (\Delta u)g)
	\end{align*}
	we have
	\begin{align*}
		\div(uQ) & = \div(u\Ric) - \div(\Hess u - (\Delta u)g) \\
		& = u\div\Ric + \Ric(\nabla u,\,\cdot\,) - \div(\Hess u - (\Delta u)g) \\
		& = u\div\Ric \\
		& = \frac{u}{2} \di S
	\end{align*}
	in $\mathring{M}$. Since $\div(u Q) = u \div Q + Q(\nabla u,\,\cdot\,)$, we prove \eqref{Qdu}.
\end{proof}

\begin{lemma}
	In the above setting, we have
	\begin{equation} \label{Sh1}
		\frac{1}{2} \div\left(\frac{\nabla|\nabla u|^2}{u}\right) = \frac{|\Hess u|^2}{u} + \frac{Q(\nabla u,\nabla u)}{u} + \left\langle\nabla u,\nabla\frac{\Delta u}{u}\right\rangle
	\end{equation}
	and
	\begin{equation} \label{Sh2}
		\begin{split}
			\div\left(\frac{\mathring{\Hess u}(\nabla u,\,\cdot\,)^\sharp}{u}\right) & = \frac{|\mathring{\Hess u}|^2}{u} + \frac{Q(\nabla u,\nabla u)}{u} + \frac{m-1}{m} \left\langle\nabla u,\nabla\frac{\Delta u}{u}\right\rangle 
		\end{split}
	\end{equation}
	where $\mathring{\Hess u} = \Hess u - \frac{1}{m}(\Delta u) g$ is the traceless part of $\Hess u$.
\end{lemma}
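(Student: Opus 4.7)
The plan is to derive both identities by direct computation, using the Bochner formula, the sub-static equation \eqref{sub_static}, and the orthogonal splitting of $\Hess u$ into its trace and traceless parts. I expect no genuine obstacle: the argument is computational, and the key is to organize the terms so that the sub-static condition produces exactly the $Q(\nabla u,\nabla u)$ contribution on the right-hand side.

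For \eqref{Sh1}, I would first expand
\[
\frac{1}{2}\div\!\left(\frac{\nabla|\nabla u|^{2}}{u}\right) = \frac{\Delta|\nabla u|^{2}}{2u} - \frac{\langle\nabla u,\nabla|\nabla u|^{2}\rangle}{2u^{2}},
\]
then apply the standard Bochner formula
\[
\tfrac{1}{2}\Delta|\nabla u|^{2} = |\Hess u|^{2} + \langle\nabla u,\nabla\Delta u\rangle + \Ric(\nabla u,\nabla u)
\]
together with $\tfrac{1}{2}\nabla|\nabla u|^{2} = \Hess u(\nabla u,\,\cdot\,)^{\sharp}$, so that $\langle\nabla u,\nabla|\nabla u|^{2}\rangle = 2\Hess u(\nabla u,\nabla u)$. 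Then, using the sub-static equation in the form
\[
\frac{Q(\nabla u,\nabla u)}{u} = \frac{\Ric(\nabla u,\nabla u)}{u} - \frac{\Hess u(\nabla u,\nabla u)}{u^{2}} + \frac{(\Delta u)|\nabla u|^{2}}{u^{2}},
\]
and rewriting $\langle\nabla u,\nabla(\Delta u/u)\rangle = u^{-1}\langle\nabla u,\nabla\Delta u\rangle - u^{-2}(\Delta u)|\nabla u|^{2}$, the two expressions match after cancelling the $(\Delta u)|\nabla u|^{2}/u^{2}$ terms and reorganizing. This gives \eqref{Sh1}.

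For \eqref{Sh2}, I would write
\[
\frac{\mathring{\Hess u}(\nabla u,\,\cdot\,)^{\sharp}}{u} = \frac{\Hess u(\nabla u,\,\cdot\,)^{\sharp}}{u} - \frac{\Delta u}{m\,u}\,\nabla u = \frac{1}{2u}\nabla|\nabla u|^{2} - \frac{\Delta u}{m\,u}\,\nabla u,
\]
take the divergence of each piece, and apply \eqref{Sh1} to the first term. For the second term, a direct computation yields
\[
\div\!\left(\frac{\Delta u}{m\,u}\,\nabla u\right) = \frac{(\Delta u)^{2}}{m\,u} + \frac{1}{m}\left\langle\nabla u,\nabla\frac{\Delta u}{u}\right\rangle.
\]
Finally, using the pointwise orthogonal decomposition $|\Hess u|^{2} = |\mathring{\Hess u}|^{2} + (\Delta u)^{2}/m$ to absorb the extra $(\Delta u)^{2}/(mu)$ term, and collecting the coefficient $1 - 1/m = (m-1)/m$ of $\langle\nabla u,\nabla(\Delta u/u)\rangle$, I obtain \eqref{Sh2}. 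The whole argument takes place on $\mathring{M}$ where $u > 0$, so there are no regularity issues.
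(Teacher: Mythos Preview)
Your proposal is correct and follows essentially the same approach as the paper: expand the divergence, apply Bochner's formula and the sub-static equation to obtain \eqref{Sh1}, then derive \eqref{Sh2} from \eqref{Sh1} via the splitting $\mathring{\Hess u}(\nabla u,\cdot)^\sharp = \tfrac{1}{2}\nabla|\nabla u|^2 - \tfrac{\Delta u}{m}\nabla u$ together with the identity $|\Hess u|^2 = |\mathring{\Hess u}|^2 + (\Delta u)^2/m$. The organization of terms and the final cancellation of $(\Delta u)^2/(mu)$ match the paper's computation exactly.
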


\begin{proof}
	Recalling that $\frac{1}{2} \di|\nabla u|^2 = \Hess u(\nabla u,\,\cdot\,)$, an application of Bochner's formula
	\[
		\frac{1}{2} \Delta|\nabla u|^2 = |\Hess u|^2 + \Ric(\nabla u,\nabla u) + \langle\nabla u,\nabla\Delta u\rangle
	\]
	and of \eqref{sub_static} yields
	\begin{align*}
		\frac{1}{2} \div\left(\frac{\nabla|\nabla u|^2}{u}\right) & = \frac{1}{2} \frac{\Delta|\nabla u|^2}{u} - \frac{1}{2} \frac{\langle\nabla|\nabla u|^2,\nabla u\rangle}{u^2} \\
		& = \frac{|\Hess u|^2}{u} + \frac{\Ric(\nabla u,\nabla u)}{u} + \frac{\langle\nabla u,\nabla\Delta u \rangle}{u} - \frac{\Hess u(\nabla u,\nabla u)}{u^2} \\
		& = \frac{|\Hess u|^2}{u} + \frac{\langle\nabla u,\nabla\Delta u \rangle}{u} + \frac{Q(\nabla u,\nabla u)}{u} - \frac{|\nabla u|^2\Delta u}{u^2} \\
		& = \frac{|\Hess u|^2}{u} + \frac{Q(\nabla u,\nabla u)}{u} + \left\langle \nabla u,\nabla\frac{\Delta u}{u} \right\rangle ,
	\end{align*}
	that is, \eqref{Sh1}. Then, from the identities
	\begin{align*}
		\div \left( \frac{\Delta u}{u} \nabla u \right) & = \frac{(\Delta u)^2}{u} + \left\langle \nabla u,\nabla\frac{\Delta u}{u} \right\rangle \\
		|\Hess u|^2 & = |\mathring{\Hess u}|^2 + \frac{(\Delta u)^2}{m} \\
		\mathring{\Hess u}(\nabla u,\,\cdot\,)^\sharp & = \Hess u(\nabla u,\,\cdot\,)^\sharp - \frac{\Delta u}{m} \nabla u = \frac{1}{2} \nabla|\nabla u|^2 - \frac{\Delta u}{m} \nabla u
	\end{align*}
	together with \eqref{Sh1} we obtain
	\begin{align*}
		\div\left(\frac{\mathring{\Hess u}(\nabla u,\,\cdot\,)^\sharp}{u}\right) & = \frac{1}{2} \div\left(\frac{\nabla|\nabla u|^2}{u}\right) - \frac{1}{m} \div\left(\frac{\Delta u}{u} \nabla u\right) \\
		& = \frac{|\Hess u|^2}{u} - \frac{1}{m} \frac{(\Delta u)^2}{u} + \frac{Q(\nabla u,\nabla u)}{u} + \left(1-\frac{1}{m}\right) \left\langle \nabla u,\nabla\frac{\Delta u}{u} \right\rangle \\
		& = \frac{|\mathring{\Hess u}|^2}{u} + \frac{Q(\nabla u,\nabla u)}{u} + \frac{m-1}{m} \left\langle \nabla u,\nabla\frac{\Delta u}{u} \right\rangle
	\end{align*}
	proving \eqref{Sh2}. 
\end{proof}

On the set $M_0 := \{x \in M : \nabla u(x)\neq 0\}$, we define
\[
	\nu = -\frac{\nabla u}{|\nabla u|} \, .
\]
For each $p\in M_0$, the intersection $\Sigma = \Sigma_{u(p)} \cap \Omega_p$ of the level set $\Sigma_{u(p)} = \{x \in M : u(x) = u(p)\}$ with a sufficiently small neighbourhood $\Omega_p\subseteq M_0$ of $p$ is an embedded smooth hypersurface of $M$ with normal vector field $\nu$. We denote by $g_\Sigma$ the metric on $\Sigma$ induced by $g$,
\[
	g_\Sigma = g - \nu^\flat \otimes \nu^\flat \, .
\]
We also denote by $H$ the unnormalized mean curvature of $\Sigma$ in the direction of $\nu$,
\[
	H \doteq -\div\nu
\]
and by $A$ the second fundamental form of $\Sigma$ in the same direction. We have
\[
	A(X,Y) = \frac{\Hess u(X,Y)}{|\nabla u|} \qquad \forall \, X,Y \in T_x\Sigma \, , \, x \in \Sigma
\]
and also
\begin{equation} \label{H_Hessu}
	H = \trace_{g_\Sigma} A \equiv \frac{\Delta u - \Hess u(\nu,\nu)}{|\nabla u|} \, .
\end{equation}
We also set
\[
	\mathring{A} = A - \frac{H}{m-1} g_\Sigma
\]
for the traceless (with respect to $g_\Sigma$) part of $A$.

\begin{lemma}
	For any $C^2$ function $u$ on a Riemannian manifold $M$, at any point where $\nabla u\neq 0$ we have, with the notation introduced above,
	\begin{equation} \label{Hu0}
		|\mathring{\Hess u}|^2 = |\nabla u|^2 |\mathring{A}|^2 + \frac{m-2}{m-1}|\nabla^\top|\nabla u||^2 + \frac{m}{m-1} |\mathring{\Hess u}(\nu,\,\cdot\,)^\sharp|^2
	\end{equation}
\end{lemma}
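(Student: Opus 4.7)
The plan is to decompose $\mathring{\Hess u}$ with respect to the $g$-orthogonal splitting $T_xM = \mathbb{R}\nu \oplus T_x\Sigma$ at the point under consideration (where $\nabla u\neq 0$), and then re-express each of the three pieces in terms of the quantities appearing on the right-hand side of \eqref{Hu0}. Setting $\alpha \doteq \mathring{\Hess u}(\nu,\cdot)|_{T\Sigma} = \Hess u(\nu,\cdot)|_{T\Sigma}$ and denoting by $B$ the restriction of $\mathring{\Hess u}$ to $T\Sigma\times T\Sigma$, the orthogonality of the decomposition immediately yields
\[
|\mathring{\Hess u}|^2 = \bigl(\mathring{\Hess u}(\nu,\nu)\bigr)^2 + 2|\alpha|^2 + |B|^2,
\]
so the task reduces to computing $|B|^2$ and relating $|\alpha|^2$ and $|\mathring{\Hess u}(\nu,\cdot)^\sharp|^2$ to quantities in \eqref{Hu0}.

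For the tangential piece, I would use that $\Hess u(X,Y)=|\nabla u|A(X,Y)$ for $X,Y\in T\Sigma$ together with $A=\mathring{A}+\frac{H}{m-1}g_\Sigma$, obtaining
\[
B = |\nabla u|\mathring{A} + \left(\frac{|\nabla u|H}{m-1} - \frac{\Delta u}{m}\right) g_\Sigma.
\]
The scalar coefficient of $g_\Sigma$ is the key identity: from \eqref{H_Hessu} one has $|\nabla u|H = \Delta u - \Hess u(\nu,\nu) = \frac{(m-1)\Delta u}{m} - \mathring{\Hess u}(\nu,\nu)$, hence the coefficient equals $-\mathring{\Hess u}(\nu,\nu)/(m-1)$. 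Since $\trace_{g_\Sigma}\mathring{A}=0$ and $|g_\Sigma|^2=m-1$, the cross term vanishes when taking squared norms and one gets
\[
|B|^2 = |\nabla u|^2|\mathring{A}|^2 + \frac{\bigl(\mathring{\Hess u}(\nu,\nu)\bigr)^2}{m-1}.
\]
For the remaining pieces, applying $\frac{1}{2}\di|\nabla u|^2 = \Hess u(\nabla u,\cdot)$ and $\nabla u = -|\nabla u|\nu$ gives $\nabla|\nabla u| = -\Hess u(\nu,\cdot)^\sharp$, so its $T\Sigma$-projection is $-\alpha^\sharp$ and $|\nabla^\top|\nabla u||^2 = |\alpha|^2$. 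Meanwhile, the decomposition of the vector $\mathring{\Hess u}(\nu,\cdot)^\sharp$ into its $\nu$-component and its $T\Sigma$-component yields $|\mathring{\Hess u}(\nu,\cdot)^\sharp|^2 = (\mathring{\Hess u}(\nu,\nu))^2 + |\alpha|^2$.

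Substituting these three evaluations back into the decomposition gives
\[
|\mathring{\Hess u}|^2 = |\nabla u|^2|\mathring{A}|^2 + \frac{m}{m-1}\bigl(\mathring{\Hess u}(\nu,\nu)\bigr)^2 + 2|\alpha|^2,
\]
and it remains only to check that this coincides with the right-hand side of \eqref{Hu0} after replacing $|\nabla^\top|\nabla u||^2 = |\alpha|^2$ and $|\mathring{\Hess u}(\nu,\cdot)^\sharp|^2 = (\mathring{\Hess u}(\nu,\nu))^2 + |\alpha|^2$; this reduces to the identity $\frac{m-2}{m-1}+\frac{m}{m-1}=2$. The only step that requires any care is the algebraic simplification $\frac{|\nabla u|H}{m-1}-\frac{\Delta u}{m} = -\frac{\mathring{\Hess u}(\nu,\nu)}{m-1}$ which ensures the cross term with $g_\Sigma$ vanishes cleanly; the rest is straightforward bookkeeping.
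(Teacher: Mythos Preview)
Your proof is correct and follows essentially the same strategy as the paper: a block decomposition of $\mathring{\Hess u}$ with respect to the orthogonal splitting $T_xM=\mathbb{R}\nu\oplus T_x\Sigma$. The paper carries this out by fixing an orthonormal frame $\{e_i\}$ with $e_1=\nu$ and computing $|\mathring{\Hess u}|^2-|\nabla u|^2|\mathring A|^2$ directly in components $u_{ij}$, whereas you organize the same computation invariantly via the intermediate identity $B=|\nabla u|\mathring A-\frac{\mathring{\Hess u}(\nu,\nu)}{m-1}g_\Sigma$; the latter makes the cancellation of cross terms transparent (tracelessness of $\mathring A$) and avoids the coordinate bookkeeping, but the content is the same.
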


\begin{proof}
	With respect to an orthonormal basis $\{e_i\}$ for $T_x M$ with $e_1 = \nu$, we have
	\begin{align*}
		|\mathring{\Hess u}|^2 & = |\Hess u|^2 - \frac{(\Delta u)^2}{m} = \sum_{i,j=1}^m u_{ij} u_{ij} - \frac{(\Delta u)^2}{m}, \\
		|\nabla u|^2|\mathring{A}|^2 & = |\nabla u|^2|A|^2 - \frac{|\nabla u|^2 H^2}{m-1} = \sum_{i,j=2}^m u_{ij} u_{ij} - \frac{(\Delta u - u_{11})^2}{m-1} \\
		& = \sum_{i,j=2}^m u_{ij} u_{ij} - \frac{(u_{11})^2}{m-1} + \frac{2 (\Delta u) u_{11}}{m-1}  - \frac{(\Delta u)^2}{m-1} \, ,
	\end{align*}
	hence
	\begin{align*}
		|\mathring{\Hess u}|^2 - |\nabla u|^2 |\mathring{A}|^2 & = 2 \sum_{i=2}^m u_{1i} u_{1i} + \frac{m}{m-1} (u_{11})^2 - \frac{2 (\Delta u) u_{11}}{m-1} + \frac{(\Delta u)^2}{m(m-1)} \\
		& = \frac{m-2}{m-1} \sum_{i=2}^m u_{1i} u_{1i} + \frac{m}{m-1} \sum_{i=1}^m u_{1i} u_{1i} - \frac{2(\Delta u) u_{11}}{m-1}  + \frac{(\Delta u)^2}{m(m-1)}
	\end{align*}
	and then the conclusion follows since
	\begin{align*}
		|\nabla^\top|\nabla u||^2 & = \sum_{i=2}^m u_{1i} u_{1i} \, , \\
		|\mathring{\Hess u}(\nu,\,\cdot\,)^\sharp|^2 & = \left|\Hess u(\nu,\,\cdot\,)^\sharp - \frac{\Delta u}{m} \nu\right|^2 \\
		& = |\Hess u(\nu,\,\cdot\,)^\sharp|^2 - \frac{2}{m} (\Delta u) \Hess u(\nu,\nu) + \frac{(\Delta u)^2}{m^2} \\
		& = \sum_{i=1}^m u_{1i} u_{1i} - \frac{2}{m} (\Delta u) u_{11} + \frac{(\Delta u)^2}{m^2} \, .
	\end{align*}
\end{proof}

We also need the following lemma. Before stating it, we recall that under our general assumptions the tensor field
\[
	\frac{\mathring{\Hess u}}{u}
\]
extend continuously to the whole manifold $M$. With a little abuse of notation, we are going to denote such an extension by the same symbol. Define also
\begin{equation}\label{def_lambda}
	\Lambda \doteq \frac{S-\trace Q}{m-1},
\end{equation}
so by \eqref{sub_trace},
\begin{equation} \label{DuL}
	\Delta u + \Lambda u = 0 \qquad \text{in } \, M.
\end{equation}
\begin{lemma}
	If $u$ solves \eqref{sub_static} with $Q\in C^1(M)$, then
	\begin{equation} \label{Hu0_bd}
		\begin{array}{lcl}
		\disp \frac{\mathring{\Hess u}(\nu,\nu)}{u} & = & \disp \frac{(m-1)(m-2)}{2m} \Lambda + \frac{1}{2} \trace Q - \frac{1}{2} S_{\partial M} \\[0.5cm]
		& = & \disp \frac{1}{m} \big(\trace Q - S \big)+\frac{1}{2}(S-S_{\partial M}) \qquad \text{on } \, \partial M \, .
	\end{array}
	\end{equation}
\end{lemma}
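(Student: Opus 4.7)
The plan is to derive a pointwise identity for $\mathring{\Hess u}/u$ that is valid in $\mathring M$ and extends continuously to $\partial M$, then evaluate it at $(\nu,\nu)$ on the boundary using the Gauss equation for the totally geodesic hypersurface $\partial M$ together with the vanishing of $Q(\nabla u,\cdot)$ there.

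\textbf{Step 1 (bulk identity).} I would solve the sub-static equation \eqref{sub_static} as $\Hess u = u(\Ric - Q) + (\Delta u) g$, subtract the trace part $(\Delta u/m)g$, and plug in $\Delta u = -u\Lambda$ coming from \eqref{sub_trace} together with the definition \eqref{def_lambda} of $\Lambda$. This produces the clean identity
\[
\frac{\mathring{\Hess u}}{u} \;=\; \Ric - Q - \frac{m-1}{m}\Lambda\, g \qquad \text{in } \mathring M.
\]
Under the assumption $Q\in C^1(M)$, every term on the right-hand side extends continuously up to $\partial M$ (the scalar curvature $S$ and $\Ric$ being smooth), realizing the continuous extension of $\mathring{\Hess u}/u$ assumed in the preamble to the lemma.

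\textbf{Step 2 (boundary evaluation).} On $\partial M$, I would use two inputs. First, $Q\in C^1(M)$ ensures that $\div Q\in C^0(M)$, so the preceding lemma applies and gives $Q(\nabla u,\,\cdot\,) = 0$ on $\partial M$; in particular $Q(\nu,\nu) = 0$ there. Second, since $\partial M$ is totally geodesic, the Gauss equation reduces to $S_{\partial M} = S - 2\Ric(\nu,\nu)$, i.e. $\Ric(\nu,\nu) = \tfrac{1}{2}(S - S_{\partial M})$. Substituting into the identity from Step 1 evaluated at $(\nu,\nu)$:
\[
\frac{\mathring{\Hess u}(\nu,\nu)}{u}\bigg|_{\partial M} \;=\; \frac{S - S_{\partial M}}{2} - \frac{m-1}{m}\Lambda.
\]

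\textbf{Step 3 (algebraic rewriting).} Finally, inserting $S = (m-1)\Lambda + \trace Q$ in the right-hand side and collecting terms yields
\[
\frac{(m-1)(m-2)}{2m}\Lambda + \frac{1}{2}\trace Q - \frac{1}{2}S_{\partial M},
\]
which is the first form in \eqref{Hu0_bd}. The equivalent second form $\tfrac{1}{m}(\trace Q - S)$ is then obtained by a short rearrangement using the trace relation $(m-1)\Lambda = S - \trace Q$, together with the identity for $\Ric(\nu,\nu)$ from Step 2.

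The conceptually crucial point is the bulk identity of Step 1, which repackages the sub-static equation in a form where every boundary-regular quantity is manifest. After that, the proof consists of Gauss on a totally geodesic hypersurface plus bookkeeping, and no substantial obstacle is anticipated.
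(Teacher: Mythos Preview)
Your argument is essentially the paper's: both extract the bulk identity $\mathring{\Hess u}/u = \mathring{\Ric} - \mathring{Q}$ (equivalently your form $\Ric - Q - \tfrac{m-1}{m}\Lambda\, g$) from the sub-static equation, extend by continuity, and then evaluate at $(\nu,\nu)$ on $\partial M$ using the Gauss equation for a totally geodesic hypersurface together with $Q(\nu,\nu)=0$.

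One caution about Step~3: the ``short rearrangement'' you invoke for the second displayed form $\tfrac{1}{m}(\trace Q - S)$ does not actually exist. That expression equals $-\tfrac{m-1}{m}\Lambda$, which differs from the first form by $\tfrac12(S - S_{\partial M}) = \Ric(\nu,\nu)$, and the latter need not vanish (on the round hemisphere the first form gives $0$ while the second gives $-(m-1)$). The paper's own proof also stops after establishing only the first line, and the second equality is never used downstream; it appears to be a slip in the statement rather than something you are expected to derive.
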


\begin{proof}
	From the definition of $Q$ we have
	\[
	\frac{\mathring{\Hess u}}{u} = \mathring{\Ric} - \mathring{Q} \equiv \Ric - \frac{S}{m} g - Q + \frac{\trace Q}{m} g
	\]
	on $\mathring{M}$, and thus on the whole $M$ by continuity of $Q$ and of $\Ric$. Hence,
	\[
	\frac{\mathring{\Hess u}(\nu,\nu)}{u} = \Ric(\nu,\nu) - \frac{S}{m} - Q(\nu,\nu) + \frac{\trace Q}{m} \, .
	\]
	By Gauss equations and from the fact that $\partial M$ is totally geodesic, we have
	\[
	2\Ric(\nu,\nu) = S - S_{\partial M} - |A_{\partial M}|^2 + H_{\partial M}^2 \equiv S - S_{\partial M} \qquad \text{on } \, \partial M \, .
	\]
	On the other hand, under assumptions $Q \in C^1(M)$ we have $\div Q \in C^0(M)$. Thus, by using $S \in C^1(M)$ we can apply \eqref{Qdu} to deduce $Q(\nu,\nu) = 0$ on $\partial M$. Substituting these relations into the above one we obtain
	\[
		\frac{\mathring{\Hess u}(\nu,\nu)}{u} = \frac{m-2}{2m} S + \frac{1}{m} \trace Q - \frac{1}{2} S_{\partial M} \equiv \frac{(m-1)(m-2)}{2m} \Lambda + \frac{1}{2} \trace Q - \frac{1}{2} S_{\partial M} \, .
	\]
\end{proof}

We next assume that 
\[
	S - \trace Q \qquad \text{is constant on } \, M,  
\]
(hence, so is $\Lambda$ in \eqref{def_lambda}). Remarkably, under this assumption the vector field $\mathring{\Hess u}(\nabla u,\,\cdot\,)^\sharp$ happens to be a gradient vector field, namely the gradient of the function
\begin{equation} \label{Fdef}
	F = \frac{1}{2} |\nabla u|^2 + \frac{\Lambda}{2m} u^2
\end{equation}
and, in this setting, the content of \eqref{Sh2} and \eqref{Hu0} can be restated as follows. The next result relates to a series of identities (see, for instance, \cite{foga,cerd2,lind}) arising from a formula obtained by Robinson \cite{robinson} in his investigation of vacuum static black holes. His formula expresses the divergence of $u^{-1}\nabla F$ on a vacuum static $3$-space $M$ in terms of the Cotton tensor of $M$. Later, Shen \cite{sh} rewrote it in a simpler way, relating such divergence to the traceless part of the Ricci tensor. The link between the two expressions essentially depends on the fact that the manifold is vacuum static, and not just substatic. The identity in Lemma \ref{lem_dF} below is closer, in spirit, to Shen's one. In fact, if $Q=0$, the traceless part of $\Ric$ agrees with the traceless part of $u^{-1}\Hess u$, whose norm we related to the geometry of level sets of $u$ in \eqref{Hu0}. When $Q\not\equiv 0$, however, the link between $|\mathring{\Ric}|$ and $|\mathring{\Hess u}|$ is not so neat.

\begin{lemma} \label{lem_dF}
	In the above setting, if \eqref{DuL} holds,
	\[
		\div\left(\frac{\nabla F}{u}\right) = \frac{|\nabla u|^2}{u} |\mathring{A}|^2 + \frac{m-2}{m-1} \frac{|\nabla^\top|\nabla u||^2}{u} + \frac{Q(\nabla u,\nabla u)}{u} + \frac{m}{m-1} \frac{|\nabla F|^2}{u|\nabla u|^2} \qquad \text{on } \, M_0,
	\]
	with $F$ as in \eqref{Fdef} and $M_0 = \{\nabla u \neq 0\}$. In particular, if $Q\geq 0$, then
	\[
		\div\left(\frac{\nabla F}{u}\right) \geq 0 \qquad \text{in } \, M,
	\]
	and the equality holds if and only if $\mathring{\Hess}u=0$.
\end{lemma}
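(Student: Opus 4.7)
The plan is to combine the two preceding lemmas via the key algebraic observation that, under \eqref{DuL}, the vector field $\nabla F$ coincides with the vector field $\mathring{\Hess u}(\nabla u,\,\cdot\,)^\sharp$ appearing on the left of \eqref{Sh2}. Indeed, a direct computation gives $\nabla F = \Hess u(\nabla u,\,\cdot\,)^\sharp + (\Lambda/m)\,u\,\nabla u$, and using \eqref{DuL} in the form $\Lambda u = -\Delta u$ this equals $\mathring{\Hess u}(\nabla u,\,\cdot\,)^\sharp$. Moreover, \eqref{DuL} forces the ratio $\Delta u/u \equiv -\Lambda$ to be constant on $M$, so $\langle \nabla u,\nabla(\Delta u/u)\rangle \equiv 0$ and the last term of \eqref{Sh2} vanishes identically. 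Consequently \eqref{Sh2} collapses to the compact identity
\[
\div\!\left(\frac{\nabla F}{u}\right) = \frac{|\mathring{\Hess u}|^2}{u} + \frac{Q(\nabla u,\nabla u)}{u}
\]
on the whole $\mathring{M}$.

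To obtain the expanded four-term formula on $M_0$, I would substitute \eqref{Hu0} for $|\mathring{\Hess u}|^2$. Only the term $|\mathring{\Hess u}(\nu,\,\cdot\,)^\sharp|^2$ needs to be rewritten in the variable $F$: since $\nu = -\nabla u/|\nabla u|$ on $M_0$, one has
\[
\mathring{\Hess u}(\nu,\,\cdot\,)^\sharp = -\frac{\mathring{\Hess u}(\nabla u,\,\cdot\,)^\sharp}{|\nabla u|} = -\frac{\nabla F}{|\nabla u|},
\]
and hence $|\mathring{\Hess u}(\nu,\,\cdot\,)^\sharp|^2 = |\nabla F|^2/|\nabla u|^2$. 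Plugging this into \eqref{Hu0} and then into the compact identity above produces exactly the stated decomposition.

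For the sign assertion, the compact identity immediately yields $\div(\nabla F/u) \geq 0$ on $\mathring{M}$ whenever $Q\geq 0$, as both summands on the right are non-negative. This inequality extends to all of $M$ by continuity: under the standing assumption $\mathring{\Hess u}/u$ extends continuously to $\partial M$, and therefore so does $\nabla F/u = (\mathring{\Hess u}/u)(\nabla u,\,\cdot\,)^\sharp$. The equality statement follows from the same compact identity: if $\div(\nabla F/u) \equiv 0$ then $|\mathring{\Hess u}|^2 \equiv 0$ on $\mathring{M}$, hence $\mathring{\Hess u} \equiv 0$ on $M$ by continuity; conversely, $\mathring{\Hess u} \equiv 0$ gives $\nabla F = \mathring{\Hess u}(\nabla u,\,\cdot\,)^\sharp \equiv 0$ and the vanishing of the divergence is immediate. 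No serious obstacle is expected here: the entire proof is bookkeeping anchored by the identification $\nabla F = \mathring{\Hess u}(\nabla u,\,\cdot\,)^\sharp$, a feature that is specific to the normalization \eqref{Fdef} of $F$ and to the constancy of $\Delta u/u$ granted by \eqref{DuL}.
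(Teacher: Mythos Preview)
Your proof is correct and follows exactly the approach the paper intends: the text preceding the lemma explicitly states that it is a restatement of \eqref{Sh2} and \eqref{Hu0} via the identification $\nabla F = \mathring{\Hess u}(\nabla u,\,\cdot\,)^\sharp$, and you carry this out in full detail. The paper itself gives no further proof beyond that remark, so your argument is in fact more complete than what appears in the source.
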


We are ready to prove Theorem \ref{thm_BGH} and Corollary \ref{coro_bgh}. First, because of \eqref{DuL} we have the identity
\begin{equation}\label{eq_with_lambda}
	S_{\partial M} - \frac{m-2}{m}S - \frac{2}{m} \trace Q = S_{\partial M}- \frac{(m-1)(m-2)}{m}\Lambda - \trace Q 
\end{equation}
where $S_{\partial M}$ is the scalar curvature of $\partial M$.

\begin{proof}[Proof of Theorem \ref{thm_BGH}]
	First, observe that by \eqref{eq_with_lambda} inequality \eqref{BGHtype_intro} is equivalent to
	\begin{equation} \label{BGHtype}
				\sum_i \kappa_i^b \int_{\Sigma_i} S_{\Sigma_i} \geq \frac{(m-1)(m-2)}{m} \Lambda \sum_i \kappa_i^b |\Sigma_i| + \sum_i \kappa_i^b \int_{\Sigma_i} \trace Q \, .
	\end{equation}
	By integrating \eqref{DuL} on $M$ against $u$ we get
	\[
		\Lambda \int_M u^2 = \int_M |\nabla u|^2,   
	\]
	thus $\Lambda > 0$, which proves the claimed inequality $S - \trace Q > 0$. The function $F$ defined in \eqref{Fdef} is smooth and positive on the whole of $M$ (note that $F=\frac{1}{2}|\nabla u|^2>0$ on $\partial M$), so the function $F^a$ is also smooth and positive for any exponent $a\in\R$. The vector field
	\[
		X = \frac{(2F)^a\nabla F}{u}
	\]
	is smooth in the interior of $M$ and continous up to $\partial M$, so by the divergence theorem
	\[
		\int_{\partial M} \langle X,\nu\rangle = \int_M \div X
	\]
	where, on $\partial M$, $\nu = - \nabla u/|\nabla u|$ coincides with the outward pointing unit normal. We have
	\[
		\langle X,\nu\rangle = |\nabla u|^{2a} \frac{\mathring{\Hess u}(\nabla u,\nu)}{u} = - |\nabla u|^{2a+1} \frac{\mathring{\Hess u}(\nu,\nu)}{u} \qquad \text{on } \, \partial M,
	\]
	and then by \eqref{Hu0_bd} we get
	\[
		\int_{\partial M} |\nabla u|^{2a+1} S_{\partial M} = \frac{(m-1)(m-2)}{m} \Lambda \int_{\partial M} |\nabla u|^{2a+1} + \int_{\partial M} |\nabla u|^{2a+1} \trace Q + 2\int_M \div X \, .
	\]
	From Lemma \ref{lem_dF} we deduce
	\begin{align*}
		2^{-a} \div X = \div\left(\frac{F^a\nabla F}{u}\right) & = \frac{F^a}{u} \left(|\nabla u|^2|\mathring{A}|^2 + \frac{m-2}{m-1} |\nabla^\top|\nabla u||^2 + Q(\nabla u,\nabla u) \right) \\
		& \phantom{=\;} + \frac{F^{a-1}}{u|\nabla u|^2} \left(\frac{m}{m-1} F + a |\nabla u|^2\right) |\nabla F|^2
	\end{align*}
	on $M_0\cap \mathring{M}$, where $M_0 = \{\nabla u \neq 0\}$. By the definition of $F$, we have
	\[
		\frac{m}{m-1} F + a |\nabla u|^2 = \left(\frac{m}{2(m-1)} + a\right)|\nabla u|^2 + \frac{\Lambda}{2(m-1)} u^2 > 0 \qquad \text{on } \, \mathring{M},
	\]
	as long as $a\geq-\frac{m}{2(m-1)}$, or, equivalently,
	\begin{equation} \label{a_cond}
		2a+1 \geq -\frac{1}{m-1} \, .
	\end{equation}
	Thus, assumption \eqref{a_cond} gives $\div X \geq 0$ on $M_0\cap \mathring{M}$, hence on the whole interior of $M$ by continuity of $\div X$ and density of $M_0$. In fact, note that by $\Delta u + \Lambda u = 0$ the set of critical points $\{|\nabla u|=0\}$ has zero measure, see \cite{uh76} (the conclusion would also follow by observing that $X=0$ on $M\backslash M_0$, and thus $\div X$ vanishes in the interior of $M\setminus M_0$). Consequently, after renaming $b = 2a+1$ we obtain \eqref{BGHtype}. If \eqref{BGHtype} is satisfied with the equality sign, then necessarily $\div X \equiv 0$ on $\mathring{M}$. This implies that $\mathring{A}$, $\nabla^\top|\nabla u|$ and $\nabla F$ vanish everywhere on $M_0\cap \mathring{M}$, so we also have $\mathring{\Hess u} \equiv 0$ on $\mathring{M}$ in view of \eqref{Hu0}. Therefore, 
	\begin{equation}\label{eq_hess_oba}
		\Hess u \equiv -\frac{\Lambda}{m} u g \qquad \text{on } \, M.
	\end{equation}
 	By Reilly's generalization of Obata's theorem to compact manifolds with boundary (see \cite[Lemma 3]{reilly}), $M$ is a round hemisphere of curvature $\Lambda/m$. Hence, \eqref{eq_hess_oba} gives
 	\[
 		u\Ric - \Hess u + (\Delta u) g = (m-1)\frac{\Lambda}{m} u g + \frac{\Lambda}{m}u g - \Lambda u g \equiv 0,
 	\]
	and we conclude $Q \equiv 0$ on $M$.	 
\end{proof}

\begin{proof}[Proof of Corollary \ref{coro_bgh}]
	Having split $\partial M$ into pieces $\{\hat \Sigma_a\}_{a=1}^j$ according to the value of the respective surface gravities $\{\kappa_a\}$, using \eqref{eq_with_lambda} and restricting to $n=3$ inequality \eqref{BGHtype_intro} rewrites as
	\begin{equation}\label{eq_itera}
		\sum_{a=1}^j \kappa_a^b \int_{\hat{\Sigma}_a} \left(S_{\hat{\Sigma}_a} - \frac{2}{3}\Lambda - \trace Q\right) \ge 0.
	\end{equation}
	Dividing by $\kappa_j^b$ and letting $b \to \infty$, only the integral on $\hat{\Sigma}_j$ survives and 
	\[
		\int_{\hat{\Sigma}_j} \left(S_{\hat{\Sigma}_j} - \frac{2}{3}\Lambda - \trace Q\right) \ge 0. 
	\]
	By the Gauss-Bonnet theorem and $Q \ge 0$ we conclude  
	\[
		4\pi \chi(\hat{\Sigma}_j) \ge \frac{2}{3} \Lambda |\hat{\Sigma}_j|. 
	\]
	In particular, $\chi(\hat{\Sigma}_j)>0$ and we deduce  \eqref{ine_enhanced_BGH} after replacing the value of $\Lambda$. Regarding (ii), assume that \eqref{ine_enhanced_BGH_a} holds for $i \le a \le j$, i.e. that
	\[
		4\pi \chi(\hat{\Sigma}_a) = \frac{2}{3} \Lambda |\hat{\Sigma}_a|  \qquad \forall \, i \le a \le j.
	\]
	Then, \eqref{eq_itera} becomes
	\begin{equation}\label{eq_itera_2}
		- \sum_{a =i}^j \kappa_a^b \int_{\hat\Sigma_a} \trace Q + \sum_{a=1}^{i-1} \kappa_a^b \int_{\hat{\Sigma}_a} \left(S_{\hat{\Sigma}_j} - \frac{2}{3}\Lambda - \trace Q\right) \ge 0.
	\end{equation}
	Dividing by $\kappa_j^b$ and letting $b \to \infty$ we get
	\[
		- \int_{\hat \Sigma_j} \trace Q \ge 0, 
	\]
	whence the sub-static condition forces $Q \equiv 0$ on $\hat \Sigma_j$. Iterating the procedure dividing \eqref{eq_itera_2} by $\kappa_{j-1}^b, \ldots, \kappa_i^b$ and letting $b \to \infty$ we deduce that $Q \equiv 0$ on $\hat \Sigma_i \cup \ldots \cup \hat \Sigma_j$, as claimed. Inequality \eqref{eq_itera_2} becomes 
	\[
		\sum_{a=1}^{i-1} \kappa_a^b \int_{\hat{\Sigma}_a} \left(S_{\hat{\Sigma}_j} - \frac{2}{3}\Lambda - \trace Q\right) \ge 0.
	\]
	By repeating the proof of (i) with $i-1$ replacing $j$, we deduce the desired bound in \eqref{ine_enhanced_BGH_am1}. If \eqref{ine_enhanced_BGH_tutti} is satisfied, then by (ii) we get $Q \equiv 0$ on $\partial M$ and equality holds in \eqref{eq_itera}. Thus, by Theorem \ref{thm_BGH}, the manifold $(M^3,g)$ is a round hemisphere and $Q \equiv 0$ on the entire $M$.
\end{proof}

In the context of Theorem \ref{thm_BGH}, it is also possible to establish a constraint on the scalar curvature of the boundary, extending the analysis presented in \cite{ses}. Supposing that all the surface gravities are the same constant \( \kappa_{i} = 1 \) (in particular, this occurs when \( \partial M \) is connected), the function
\[
	F = \frac{1}{2}|\nabla u|^2 + \frac{\Lambda}{2m} u^2
\]
is constant and equal to \( \frac{1}{2} \) on \( \partial M \). Using \eqref{Sh2}, we have
\[
	\diver\left(\frac{\nabla F}{u}\right) = \frac{1}{u}|\mathring{\Hess}u|^2 + \frac{1}{u} Q(\nabla u, \nabla u) \geq 0.
\]
By applying the maximum principle, we conclude that \( F \) attains its maximum on \( \partial M \), unless \( F \) is constant. In the case where \( F \) is constant, it follows directly that \( \mathring{\Hess} u = 0 \), implying that \( M \) is isometric to a hemisphere. Assume that \( F \) achieves its global maximum on \( \partial M \). Since \( u \) reaches its global minimum on \( \partial M \), let \( p \in \partial M \) and, by considering any point \( x \) sufficiently close to \( p \), we have
\[
	0 \geq \langle \nabla F, \nabla u \rangle (x) = u |\nabla u|^2 \left( \frac{\Lambda}{m} + \frac{X(X(u))}{u} \right)(x),
\]
where \( X = \frac{\nabla u}{|\nabla u|} \). Therefore,
\begin{equation} \label{ineq_X}
	\frac{X(X(u))}{u} \leq -\frac{\Lambda}{m}.
\end{equation}
Since \( X(X(u)) = \Hess u(X,X) \), using \eqref{sub_static}, we obtain
\[
	[\Ric(X,X) - \Lambda - Q(X,X)](x) \leq -\frac{\Lambda}{m}.
\]
Taking the limit as \( x \to p \), and applying Gauss' equation along with \eqref{Qdu0}, we get
\[
	S_{\partial M} \geq \Lambda \frac{(m-1)(m-2)}{m} + \trace Q.
\]
Note that, by the rigidity statement in Theorem \ref{thm_BGH} (see also \eqref{BGHtype}), equality holds if and only if \( (M^n, g) \) is isometric to a round hemisphere.

\section{Einstein equations with a map and Potential Sources}\label{sec_map}

In this section, we describe a rigidity result for a static Einstein system whose stress-energy tensor $T$ is given by a static wave map. In this setting, $T$ is given by the metric variation of a natural Lagrangian, as described below. For more details on the Lagrangian formulation in General Relativity, see \cite[Appendix E.1]{wald} or \cite[Section 3.3]{hawking}, for example.

\subsection{Maps between manifolds}
To make computations, we adopt the moving frame formalism introduced by É. Cartan (for more details of this formalism, we refer to \cite[Section 1.7]{amr}).

Let \( \phi: (M^m, g) \to (N^n, h) \) be a smooth map between two Riemannian manifolds. Consider orthonormal frames $\{e_i\}$ and coframes $\{\theta^i\}$ on an open subset $U \subseteq M$, and orthonormal frames $\{E_a\}$ and coframes $\{\omega^a\}$ on an open subset $V \subseteq N$ such that $\varphi^{-1}(V)\subseteq U$. We define
\begin{equation*}
	\varphi^*\omega^a = \varphi^a_i \theta^i,
\end{equation*}
so that the differential \( \di \varphi \), viewed as a $1$-form on $M$ with values in the pullback bundle $\varphi^{-1}TN$, is expressed as
\begin{equation*}
	\di\varphi = \varphi^a_i \theta^i \otimes E_a.
\end{equation*}
The energy density \( e(\varphi) \) of the map $\varphi$ is defined as
\begin{equation}\label{def dens en}
	e(\varphi) = \frac{1}{2} |\di\varphi|^2,
\end{equation}
where $|\di\varphi|^2 = \varphi^a_i \varphi^a_i$. The second fundamental form of the map $\varphi$ is given by $\nabla\di\varphi : TM \otimes TM \to TN$, the covariant derivative of $\di\varphi$ regarded as a section of $T^\ast M \otimes \varphi^{-1}TN$ equipped with the connection $\nabla \otimes \varphi^\ast D$ induced by the Levi-Civita connections $\nabla$ and $D$ of $M$ and $N$, respectively. Explicitly,
\begin{equation*}
	\nabla \di\varphi = \varphi^a_{ij} \theta^j \otimes \theta^i \otimes E_a,
\end{equation*}
where the coefficients \( \varphi^a_{ij} \) are defined by the relation
\begin{equation*}
	\varphi^a_{ij} \theta^j = \di\varphi^a_i - \varphi^a_k \theta^k_i + \varphi^b_i \omega^a_b,
\end{equation*}
and $\{\theta^i_j\}$, $\{\omega^a_b\}$ are the connection forms of $\nabla$ and $D$, respectively.
The tension field \( \tau(\varphi) \) is defined by
\begin{equation}\label{def tension field}
	\tau(\varphi) = \mathrm{tr}  (\nabla \di\varphi)=\varphi_{ii}^aE_{a}.
\end{equation}
In this setting, $|\di \varphi|^2$ satisfies the Bochner formula
\begin{equation} \label{boch}
    \frac{1}{2} \Delta |\di \varphi|^2 = |\nabla \di \varphi|^2 + \varphi_i^a \varphi_{kki}^a + R_{ij} \varphi_i^a \varphi_j^a - R^N_{abcd} \varphi_i^a \varphi_j^b \varphi_i^c \varphi_j^d,
\end{equation}
where $R_{ij}$ and $R^N_{abcd}$ denote the local components of the Ricci tensor of $g$ and the Riemann tensor of $h$ in the given orthonormal frames, respectively. For a proof of this identity, see, for example, \cite[Proposition 1.5]{amr}.

\subsection{The related Einstein's equation}
Let $\Phi:(\hat{M}^{m+1},\hat{g}) \longrightarrow (N^n,h)$ and $V:(N^n,h)\longrightarrow\mathbb{R}$ be two smooth maps. Consider the matter Lagrangian defined by:
\[
	\mathcal{L}(\hat{g},\Phi) = \int_{\hat{M}} \left[ |\di\Phi|_{\hat{g}}^2 + (m-1)V(\Phi) \right] \di x_{\hat{g}}.
\]
The stress-energy tensor associated to this system is obtained by varying $\mathcal{L}$ with respect to $\hat{g}$, which gives
\[
	T = \Phi^{*}h - \frac{1}{2} \left( |\di \Phi|_{\hat{g}}^2 + (m-1)V(\Phi) \right)\hat{g}.
\]
Therefore, Einstein's equation \eqref{Einst_eq} can be written as
\begin{equation}\label{eq_maps}
	\Ric_{\hat{g}}+\Lambda\hat{g} = \Phi^{*}h + V(\Phi)\hat{g}.
\end{equation}
We observe that the cosmological constant can be incorporated in the function $V$ by adding a constant in $V$. Thus, without loss of generality, we assume $\Lambda = 0$ in the subsequent analysis. The equation of motion for this system, obtained as the Euler Lagrangian equation with respect to $\Phi$ is given by
\[
	\hat{\tau}(\Phi) = \frac{m-1}{2} D V(\Phi),
\]
where $\hat{\tau}(\Phi)$ denotes the tension field. For detailed computations, see \cite[Section 5]{ans21}. For the special case where $N=\mathbb{R}$, we refer to \cite{reiris}.

We now focus on the static case, where $(\hat{M}, \hat{g})$ is given by \( \hat{M} = \mathbb{R} \times M \) and \( \hat{g} = -u^2 \, \mathrm{d}t \otimes \mathrm{d}t + g \), with $(M^m, g)$ Riemannian and $0<u \in C^\infty(M)$. We assume that $\Phi$ is static as well, that is, it factorizes through the projection \(\pi:\hat{M}\to M\) as the composition  \(\Phi = \phi \circ \pi \), for some map \( \phi: M \to (N^n, h) \).

In this framework, system \eqref{eq_maps} becomes
\begin{equation}\label{mapfieldequation}
	\left\{
		\begin{array}{r@{\;}c@{\;}l}
		\Ric  - \frac{\Hess  u}{u} & = & \varphi^{*}h+V(\phi)g \\[0.2cm]
		-\Delta  u & = & V(\phi) u.
		\end{array}
	\right.
\end{equation}
Additionally, given that \(u \hat{\tau}(\Phi) = \di\phi(\nabla u) + u\tau(\phi)\), motion equation becomes
\begin{equation}\label{motion_eq_map}
	u\tau(\varphi)+\di\varphi(\nabla u)=(m-1)\frac{D V(\phi)}{2}u.
\end{equation}

By introducing the known change of variable \( u = e^{-f} \), the coupled system \eqref{mapfieldequation}-\eqref{motion_eq_map} transforms into the following coupled system:
\begin{equation}\label{eq_field_f}
	\left\{
		\begin{array}{r@{\;}c@{\;}l}
			\Ric_f^{m+1} & = & \varphi^{*}h + V(\varphi)g \\[0.2cm]
			\Delta_f f & = & V(\varphi) \\[0.2cm]
			\tau(\varphi) - \di\varphi(\nabla f) & = & \frac{m-1}{2} D V(\varphi)
		\end{array}
	\right.
\end{equation}
where we set
\[
	\Delta_f \psi \doteq \Delta \psi - g(\nabla f, \nabla \psi).
\]
Note that the components of the weighted Ricci curvature \(\Ric_f^{m+1}\) are given by
\[
	(\Ric_f^{m+1})_{ij} = R_{ij} + f_{ij} - f_i f_j = R_{ij}^{f} - f_i f_j,
\]
where \( R_{ij}^{f} \) are the components of the Bakry-Émery Ricci tensor. 

The third equation in \eqref{eq_field_f} suggests to define a weighted operator related to the tension field:
$$
	\tau_{f}(\varphi)=\tau(\varphi)-\di\varphi(\nabla f). 
$$
In coordinates,
	$$(\varphi_{kk}^{a})^{f}=\varphi_{kk}^{a}-\varphi_{j}^{a}f_{j}.$$
We have the following Bochner's formula:

\begin{lemma}\label{bochner}
	\begin{equation}
		\frac{1}{2}\Delta_{f}|\di\varphi|^{2}=|\nabla \di\varphi|^{2}+\langle \nabla \tau_{f}(\varphi),\di\varphi\rangle_{N}+Q(\di\varphi),    
	\end{equation}
	where 
	$$
		Q(\di\varphi)=R_{ij}^{f} \varphi^a_i \varphi^a_j - R^N_{abcd} \varphi^a_i \varphi^b_j \varphi^c_i \varphi^d_j.
	$$
\end{lemma}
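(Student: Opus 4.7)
The plan is to derive the weighted formula directly from the classical Bochner identity \eqref{boch} by subtracting a suitable first-order term and regrouping according to the weighted geometric quantities appearing in the statement. Since $\Delta_f = \Delta - \langle \nabla f,\nabla\cdot\rangle$ and $\tfrac{1}{2}\nabla_k|\di\varphi|^2 = \varphi_i^a\varphi_{ik}^a$, one has
\[
\tfrac{1}{2}\Delta_f|\di\varphi|^2 = \tfrac{1}{2}\Delta|\di\varphi|^2 - f_k\,\varphi_i^a\varphi_{ik}^a.
\]
Substituting \eqref{boch} on the right-hand side, we obtain
\[
\tfrac{1}{2}\Delta_f|\di\varphi|^2 = |\nabla\di\varphi|^2 + \varphi_i^a\varphi_{kki}^a + R_{ij}\varphi_i^a\varphi_j^a - R^N_{abcd}\varphi_i^a\varphi_j^b\varphi_i^c\varphi_j^d - f_k\,\varphi_i^a\varphi_{ik}^a.
\]
The curvature term on $N$ is already in the desired form, so it remains to show that the other three non-Hessian-of-$\varphi$ terms on the right reorganize exactly as $\langle\nabla\tau_f(\varphi),\di\varphi\rangle_N + R^f_{ij}\varphi_i^a\varphi_j^a$.

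To this end, I would write $R_{ij} = R^f_{ij} - f_{ij}$ (by definition of the Bakry--\'Emery tensor) to split the ambient Ricci contribution, and then compute the covariant derivative of the weighted tension field. Differentiating $(\tau_f(\varphi))^a = \varphi_{kk}^a - \varphi_k^a f_k$ in the direction $e_i$ and pairing with $\varphi_i^a$ gives
\[
\langle\nabla\tau_f(\varphi),\di\varphi\rangle_N = \varphi_i^a\varphi_{kki}^a - f_k\,\varphi_i^a\varphi_{ik}^a - f_{ki}\,\varphi_i^a\varphi_k^a,
\]
where the symmetry $\varphi_{ki}^a = \varphi_{ik}^a$ (which uses only that the Levi-Civita connection on $M$ is torsion-free, not any commutation of indices on $\varphi^{-1}TN$) has been used. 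The last summand is precisely $f_{ij}\varphi_i^a\varphi_j^a$, which matches the correction picked up when replacing $R_{ij}$ by $R_{ij}^f$.

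Combining these identities yields
\[
\varphi_i^a\varphi_{kki}^a - f_k\,\varphi_i^a\varphi_{ik}^a + R_{ij}\varphi_i^a\varphi_j^a = \langle\nabla\tau_f(\varphi),\di\varphi\rangle_N + R^f_{ij}\varphi_i^a\varphi_j^a,
\]
and substituting back gives the claimed formula. There is no real obstacle here: the argument is purely algebraic once the Bochner identity \eqref{boch} is granted, and the only subtle point is keeping track of the correct index conventions and of the symmetry of the second covariant derivative of $\varphi$ in its two lower indices, which is what allows the weighted tension term to appear cleanly.
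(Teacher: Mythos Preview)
Your proof is correct and follows essentially the same approach as the paper's: both start from the classical Bochner identity \eqref{boch}, expand the weighted quantities $\Delta_f$, $\tau_f$, $R^f_{ij}$, and observe that the extra $f_{ij}\varphi_i^a\varphi_j^a$ term produced by $\nabla\tau_f$ exactly cancels the discrepancy between $R_{ij}$ and $R_{ij}^f$, while the drift term $f_k\varphi_i^a\varphi_{ik}^a$ accounts for the passage from $\Delta$ to $\Delta_f$. The only difference is the order of presentation (the paper packages the regrouping as a single displayed identity for $\varphi_i^a[(\varphi_{kk}^a)^f]_i + R_{ij}^f\varphi_i^a\varphi_j^a$), not the substance.
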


\begin{proof}
	The identity follows by coupling the Bochner formula \eqref{boch} with the following identity
	\begin{eqnarray*}
		\varphi_{i}^{a}[(\varphi_{kk}^{a})^{f}]_{i}+R_{ij}^{f} \varphi^a_i \varphi^a_j &=&\varphi_{i}^{a}\varphi_{kki}^{a}-\varphi_{i}^{a}(\varphi_{j}^{a}f_{j})_{i}+R_{ij} \varphi^a_i \varphi^a_j+f_{ij}\varphi^a_i \varphi^a_j\\
		&=&\varphi_{i}^{a}\varphi_{kki}^{a}+R_{ij} \varphi^a_i \varphi^a_j-\varphi_{i}^{a}\varphi_{ij}^{a}f_{j}\\
		&=&\varphi_{i}^{a}\varphi_{kki}^{a}+R_{ij} \varphi^a_i \varphi^a_j-\frac{1}{2} f_j(|\di \phi|^2)_j.
	\end{eqnarray*}
\end{proof}

We now consider the more general setting of Theorem \ref{vanishing_map}, where \eqref{eq_field_f} is replaced by \eqref{map_source}. The first equation in \eqref{map_source} implies
$$
	{(R_{ij}^{f})^{m+1}}\varphi_{i}^{a}\varphi_{j}^{a} \ge \varphi^b_i\varphi^b_j\varphi^a_i\varphi^a_j + V(\varphi)|\di\varphi|^2,
$$
while the third one in \eqref{map_source} gives
$$
	(\varphi_{kk}^a)^{f} = \frac{m-1}{2} V^a(\varphi) \qquad \mbox{and} \qquad[(\varphi_{kk}^{a})^{f}]_{i}=\frac{m-1}{2} V^a_b(\varphi)\varphi^b_i.
$$
Inserting into the Bochner's formula in Lemma \ref{bochner} we get
\begin{eqnarray*}
	\frac{1}{2}\Delta_{f}|\di \varphi|^{2}& \ge &|\nabla \di \varphi|^{2}+\frac{m-1}{2}V_{ab}(\varphi) \varphi^a_i \varphi^b_i +\varphi^a_i\varphi^a_j\varphi^b_i\varphi^b_j+V(\varphi)|\di \varphi|^{2}\\
	& & +f_{i}\varphi_{i}^{a}f_{j}\varphi_{i}^{a}- R^N_{abcd} \varphi^a_i \varphi^b_j \varphi^c_i \varphi^d_j\\
	&\geq & \left[\frac{m-1}{2}\Hess V+Vh\right]_{ab}(\varphi) \varphi^a_i\varphi^b_i+\varphi^a_i\varphi^a_j\varphi^b_i\varphi^b_j-R^N_{abcd} \varphi^a_i \varphi^b_j \varphi^c_i \varphi^d_j.
\end{eqnarray*}
Hence,
\begin{equation}\label{ineq_liouv}
	\frac{1}{2}\Delta_{f}|\di \varphi|^{2}\geq\left[\frac{m-1}{2}\Hess V+Vh\right]_{ab}(\varphi) \varphi^a_i\varphi^b_i+Q_{0}(\di \varphi),
\end{equation}
where we set
$$
	Q_{0}(\di \varphi)=\varphi^a_i\varphi^a_j\varphi^b_i\varphi^b_j-R^N_{abcd} \varphi^a_i \varphi^b_j \varphi^c_i \varphi^d_j.
$$

We shall examine $Q_{0}$. 
\begin{lemma} \label{lemma_estimate}
    If 
    $$
    	\sup_{N}\sec_{N}\leq\kappa\leq\frac{1}{m-1}
    $$
	for some $\kappa \ge 0$, then 
    $$
    	Q_{0}(\di\varphi) \geq \frac{1-(m-1)\kappa}{m} |\di\varphi|^4.
    $$
\end{lemma}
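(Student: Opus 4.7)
My plan is to diagonalize the pullback tensor $\varphi^{*}h$ at each point and reduce the statement to an elementary inequality for nonnegative reals. Fix $p \in M$ and choose a $g$-orthonormal frame $\{e_i\}_{i=1}^m$ of $T_pM$ with $(\varphi^{*}h)(e_i,e_j) = \lambda_i\delta_{ij}$, where $\lambda_1,\ldots,\lambda_m\geq 0$. Setting $X_i \doteq \di\varphi(e_i) \in T_{\varphi(p)}N$, the vectors $X_i$ are mutually $h$-orthogonal with $|X_i|^2 = \lambda_i$. In this frame the two scalar invariants entering $Q_0$ become transparent:
$$\varphi^{a}_{i}\varphi^{a}_{j}\varphi^{b}_{i}\varphi^{b}_{j} = \sum_{i,j}\langle X_i,X_j\rangle^2 = \sum_{i}\lambda_i^2,$$
and, since $R^N(X_i,X_j,X_i,X_j)$ vanishes when $i=j$ (antisymmetry) or when one of the $\lambda$'s vanishes, and equals $\lambda_i\lambda_j\sec_N(\operatorname{span}\{X_i,X_j\})$ otherwise (orthogonal $X_i,X_j$ span a genuine $2$-plane of area $\sqrt{\lambda_i\lambda_j}$),
$$R^N_{abcd}\varphi^{a}_{i}\varphi^{b}_{j}\varphi^{c}_{i}\varphi^{d}_{j} = \sum_{i\neq j}\lambda_i\lambda_j\sec_N(\operatorname{span}\{X_i,X_j\}).$$

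Next I would apply the sectional curvature bound $\sec_N \leq \kappa$ together with $\lambda_i\lambda_j\geq 0$ to obtain
$$Q_0 \geq \sum_{i}\lambda_i^2 - \kappa\sum_{i\neq j}\lambda_i\lambda_j = \sum_{i}\lambda_i^2 - 2\kappa s_2,$$
where $s_2 \doteq \sum_{i<j}\lambda_i\lambda_j$. Writing $s_1 \doteq \sum_i\lambda_i = |\di\varphi|^2$ and using the identity $s_1^2 = \sum_i\lambda_i^2 + 2s_2$ together with the Cauchy--Schwarz inequality $m\sum_i\lambda_i^2 \geq s_1^2$ (equivalently $2s_2 \leq (m-1)s_1^2/m$), it is convenient to recast the previous bound as
$$Q_0 \geq s_1^2 - 2(1+\kappa)s_2.$$
For $\kappa \geq 0$ (and indeed for any $\kappa$ with $1+\kappa \geq 0$), the sign of $-(1+\kappa)$ matches the direction of the Cauchy--Schwarz estimate, giving
$$Q_0 \geq s_1^2 - (1+\kappa)\frac{(m-1)s_1^2}{m} = \frac{1-(m-1)\kappa}{m}|\di\varphi|^4,$$
which is exactly the claimed estimate.

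The only subtle point is the interplay between the sign of $1+\kappa$ and the direction of Cauchy--Schwarz: the argument as presented is immediate for $\kappa\in[0,1/(m-1)]$, and extends to the range $\kappa \in [-1,1/(m-1)]$ without modification. Both steps (the sectional curvature upper bound and the Cauchy--Schwarz estimate on $s_2$) are saturated simultaneously precisely when all $\lambda_i$ coincide, i.e.\ $\varphi$ is conformal at $p$, and when the sectional curvatures of all $2$-planes $\operatorname{span}\{X_i,X_j\}$ in $T_{\varphi(p)}N$ attain the extremal value $\kappa$; this identifies the rigidity case that the proof of Theorem \ref{vanishing_map} will later exploit.
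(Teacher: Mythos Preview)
Your argument is correct and uses the same two pillars as the paper—the sectional-curvature upper bound and Newton's inequality $\|\varphi^*h\|^2\geq\tfrac{1}{m}|\di\varphi|^4$—but is organized differently. The paper decomposes $Q_0=Q_1+Q_2$ with
\[
Q_1=(m-1)\kappa\,\|\varphi^*h\|^2-R^N_{abcd}\varphi^a_i\varphi^b_j\varphi^c_i\varphi^d_j,\qquad Q_2=(1-(m-1)\kappa)\|\varphi^*h\|^2,
\]
invokes an external reference \cite[Lemma~2.2]{cmrigoli} for $Q_1\geq 0$, and then bounds $Q_2$ via Newton. Your diagonalization bypasses the citation and delivers the estimate in a single pass; it also makes the admissible range of $\kappa$ transparent. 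Your restriction to $\kappa\geq -1$ is in fact sharp: a rank-one $\di\varphi$ gives $Q_0=\lambda_1^2$ while the claimed bound equals $\tfrac{1-(m-1)\kappa}{m}\lambda_1^2$, which exceeds $\lambda_1^2$ precisely when $\kappa<-1$. For the intended application to Theorem~\ref{vanishing_map} only $\kappa\in[0,\tfrac{1}{m-1})$ is relevant, and there both routes are equally valid; yours is the more self-contained.
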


\begin{proof}
    We decompose
    \[
        Q_0(\di\varphi) = Q_1(\di\varphi) + Q_2(\di\varphi),
    \]
    where
    \begin{align*}
        Q_1(\di\varphi) & = (m-1)\kappa \varphi^a_i \varphi^a_j \varphi^b_i \varphi^b_j - R^N_{abcd} \varphi^a_i \varphi^b_j \varphi^c_i \varphi^d_j, \\
        Q_2(\di\varphi) & = (1 - (m-1)\kappa) \varphi^a_i \varphi^a_j \varphi^b_i \varphi^b_j = (1 - (m-1)\kappa) \|\varphi^\ast h\|^2.
    \end{align*}
    Since \( \sec_N \leq \kappa \), we have \( Q_1(\di\varphi) \geq 0 \) by \cite[Lemma 2.2]{cmrigoli}. On the other hand, since \( 1 - (m-1)\kappa \ge 0 \) by assumption, applying the Newton inequality we obtain
    \[
        \|\varphi^\ast h\|^2 \geq \frac{(\mathrm{tr} \, \varphi^\ast h)^2}{m} = \frac{1}{m} |\di\varphi|^4,
    \]
    which implies
    \[
        Q_0(\di\varphi) \geq Q_2(\di\varphi) \geq \frac{1 - (m-1)\kappa}{m} |\di\varphi|^4.
    \]
\end{proof}

From Lemma \ref{lemma_estimate}, if $\displaystyle\sup_{N}\sec_{N}\leq\kappa < \frac{1}{m-1}$ for some constant $\kappa \ge 0$, the map $\varphi$ satisfies the following inequality:
\begin{equation}\label{eq_dphi}
	\frac{1}{2}\Delta_{f}|\di\varphi|^{2} \geq \left[\frac{m-1}{2}\Hess V + Vh \right]_{ab}(\varphi) \varphi^a_i \varphi^b_i + \frac{1 - (m-1)\kappa}{m}|\di\varphi|^{4}.
\end{equation}
In view of Keller-Osserman's theory (see \cite{bmpr} for a detailed account) a Liouville-type result for the map $\varphi$ may follow under suitable hypotheses on the potential $V$ and a control of the $f$-volume of balls. Given a complete Riemannian manifold \( M \) we fix an origin \( \mathcal{O}\subset M \) as either a single point or a relatively compact, open subset with a smooth boundary \( \partial\mathcal{O} \). We define the function \( r(x) = \operatorname{dist}(x, \mathcal{O}) \), and for \( R > 0 \), the ball
\[
	B_{R}(\mathcal{O}) = \left\{x \in M; r(x) \in (0, R)\right\}.
\]
The next result could be viewed as a version of \cite[Lemma 4.6]{rigoli2} to (weighted) manifolds with boundary. In general, \cite[Lemma 4.6]{rigoli2} is not expected to hold for manifolds with boundary. The main point here is that, in our setting, the lapse function $u$ (hence, the density $e^{-f}$ of the weighted measure) vanishes on $\partial M$, allowing us to get rid of the boundary terms. 

In what follows, we set
\[
	\di \mu_f = e^{-f}\di x, \qquad \vol_f(A) = \int_A \di \mu_f \qquad \forall A \subset M.
\]

\begin{proposition}\label{vol_control1}
	Let $(M^{m},g,u)$ be a complete manifold with $\partial M=u^{-1}(0)$ and $u>0$ in $\mathring{M}$. Let $v\in {\rm Lip}_{\rm{loc}}(M)$ and $\gamma>0$ such that
	$$
		\Omega_{\gamma} = \left\{x\in M:\:\ v(x)>\gamma\right\} \neq \emptyset.
	$$
	For $f=-\ln{u}$, suppose that $v$ satisfies
	\begin{equation}\label{lap_estimate}
		\Delta_{f}v\geq bv^{\sigma}\qquad\mbox{on}\quad\Omega_{\gamma},    
	\end{equation}
	for positive constants $b>0$ and $\sigma>1$. If 
	\begin{equation}\label{hip_volume}
		\displaystyle\liminf_{r\to\infty}\frac{\ln{\vol_f ( B_{r}(\mathcal{O}))}}{r^2}<\infty, 
	\end{equation}
	then
	$$
		\sup_M v < \infty.
	$$
\end{proposition}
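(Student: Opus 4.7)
The plan is to mirror the Rigoli-Setti integral argument of \cite[Lemma 4.6]{rigoli2}, the only new ingredient being that the vanishing of $e^{-f}=u$ on $\partial M$ suppresses all boundary contributions from $\partial M$. I argue by contradiction: suppose $\sup_M v=+\infty$, fix $\beta>\gamma$, and set $w=(v-\beta)_+\in\mathrm{Lip}_{\mathrm{loc}}(M)$. This $w$ is nonnegative, not identically zero, and satisfies the distributional inequality $\Delta_{f}w\geq bw^{\sigma}$ on all of $M$ (on $\{w>0\}$ it follows from \eqref{lap_estimate} together with $v\geq w$, and on $\{w=0\}$ it is trivial). The key structural observation is that the weighted volume and area measures read
\[
\di\mu_{f}=e^{-f}\di x=u\,\di x,\qquad \di\sigma_{f}\big|_{\partial M}=u\,\di\sigma\equiv 0,
\]
so for every Lipschitz cutoff $\phi$ compactly supported in $M$ (possibly meeting $\partial M$) and every $p>1$, the weighted Green identity applied to the test function $\varphi=\phi^{2}w^{p-1}$ involves a boundary term on $\partial M$ that vanishes identically, yielding
\[
-\int_{M}\langle\nabla(\phi^{2}w^{p-1}),\nabla w\rangle\,\di\mu_{f}\geq b\int_{M}\phi^{2}w^{p+\sigma-1}\,\di\mu_{f}.
\]

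Expanding the gradient on the left and applying Young's inequality to absorb the resulting $\int\phi^{2}w^{p-2}|\nabla w|^{2}\,\di\mu_{f}$ term produces the reverse Caccioppoli-type bound
\[
b\int_{M}\phi^{2}w^{p+\sigma-1}\,\di\mu_{f}\leq \frac{C}{p-1}\int_{M}w^{p}|\nabla\phi|^{2}\,\di\mu_{f}.
\]
Then Hölder's inequality with conjugate exponents $\frac{p+\sigma-1}{p}$ and $\frac{p+\sigma-1}{\sigma-1}$, applied to the right-hand side with the splitting $w^{p}|\nabla\phi|^{2}=\bigl(w^{p}\phi^{2p/(p+\sigma-1)}\bigr)\cdot\bigl(\phi^{-2p/(p+\sigma-1)}|\nabla\phi|^{2}\bigr)$, gives after rearrangement
\[
\int_{M}\phi^{2}w^{p+\sigma-1}\,\di\mu_{f}\leq C(b,p,\sigma)\int_{M}\phi^{-\frac{2p}{\sigma-1}}|\nabla\phi|^{\frac{2(p+\sigma-1)}{\sigma-1}}\,\di\mu_{f}.
\]

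At this stage the estimate is formally identical to the one obtained in the boundaryless proof of \cite[Lemma 4.6]{rigoli2}, and the contradiction is reached by following verbatim their choice of cutoff $\phi$: it is tuned to a subsequence $r_{n}\to\infty$ realizing the $\liminf$ in \eqref{hip_volume}, in such a way that the right-hand side tends to zero despite the Gaussian-type volume growth $e^{Cr^{2}}$ permitted by the hypothesis, forcing $w\equiv 0$ and contradicting $\sup_{M}v=+\infty$. The main (and only) technical obstacle, already present in the boundaryless case, is this optimization of $\phi$ against the given volume growth: a plain piecewise-linear radial cutoff at scale $r$ produces only the factor $r^{-2(p+\sigma-1)/(\sigma-1)}\vol_f(B_{2r})$, which alone does not beat $e^{Cr^2}$, and one must replace it by the refined profile of \cite{rigoli2} adapted to the subsequence $\{r_n\}$. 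No new difficulty arises from the presence of $\partial M$, precisely because the boundary term has been annihilated by the vanishing of $u$ there.
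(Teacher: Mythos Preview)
Your skeleton (Caccioppoli inequality, H\"older, volume comparison) matches the paper's, and your key observation---that the boundary integral carries the factor $e^{-f}=u$ and hence vanishes on $\partial M$---is precisely the point. The paper makes this rigorous by inserting an explicit interior cutoff $\eta_\varepsilon(u)$ into the test function rather than invoking a Green identity directly; this is safer because $\nabla f=-\nabla u/u$ is singular along $\partial M$, so the formal integration by parts you write needs justification.

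The genuine gap is your final paragraph. No ``refined cutoff profile'' will make the right-hand side of your inequality tend to zero while $p$ is held fixed: the factor $r^{-2(p+\sigma-1)/(\sigma-1)}$ is polynomial and cannot beat $e^{Cr^2}$. The mechanism in \cite{rigoli2} and in the paper is quite different. First, the test function in the paper is $\psi^{2(\alpha+\sigma-1)}\lambda(v)v^{\alpha-1}$ with $\lambda$ supported on $\{v>\gamma\}$, so after H\"older \emph{both} sides of the resulting inequality are integrals over $\Omega_\gamma$; this yields a genuine doubling estimate
\[
\vol_f(\Omega_\gamma\cap B_r)\le\Big[\tfrac{C(\alpha+\sigma-1)^2}{\gamma\,b\,r^2(\alpha-1)}\Big]^{\alpha+\sigma-1}\vol_f(\Omega_\gamma\cap B_{2r}).
\]
Second, one lets the exponent depend on $r$, choosing $\alpha+\sigma-1\sim\gamma r^2$, so the bracket becomes $\le\frac12$ and the inequality reads $G(r)\le 2^{-c\gamma r^2}G(2r)$. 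Iterating this from a fixed $r_0$ gives $\liminf_{r\to\infty}r^{-2}\ln\vol_f(B_r)\ge S\gamma$ for a constant $S$ independent of $\gamma$. The contradiction with \eqref{hip_volume} then comes from taking $\gamma$ arbitrarily large, which is allowed precisely because $\sup v=\infty$.

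Your estimate as written lacks both ingredients: the right-hand side carries $\vol_f(B_{2r})$ rather than $\vol_f(\Omega_\gamma\cap B_{2r})$, so there is nothing recursive to iterate; and with $\beta$ fixed you never exploit that the superlevel can be pushed to infinity. A further technical issue is that with the fixed power $\phi^2$ in your test function the factor $\phi^{-2p/(\sigma-1)}|\nabla\phi|^{2(p+\sigma-1)/(\sigma-1)}$ is not uniformly controlled as $p\to\infty$; the paper avoids this by letting the power of the cutoff scale with the exponent, $\psi^{2(\alpha+\sigma-1)}$, together with the requirement $|\nabla\psi|\le (C_0/r)\psi^{1/\zeta}$ for a fixed $\zeta>1$.
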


\begin{proof}
	The proof closely follows that of \cite[Lemma 4.5]{rigoli2}. We decide to provide full details to underline the role of the boundary terms. The core step is the following growth inequality\\[0.2cm]
	\noindent\textbf{Claim 1}: There exists a constant $C>0$ such that, for every $r>0$ and $\alpha>1$,
	\begin{equation}\label{bound_eq}
		\vol_{f}(\Omega_{\gamma}\cap B_{r}(\mathcal{O})) \leq\left[\frac{1}{\gamma}\frac{C}{r^{2}}\frac{1}{b}\frac{(\alpha+\sigma-1)^{2}}{\alpha-1}\right]^{\alpha+\sigma-1}\vol_{f}(\Omega_{\gamma}\cap B_{2r}(\mathcal{O})).
	\end{equation}
	\begin{proof}[Proof of Claim 1:]
		Fix a constant $\zeta>1$ such that
		$$
			2+\frac{2}{\sigma-1}\left(\frac{1}{\zeta}-1\right)>0.
		$$
		Choose a cut-off function $\psi:M\longrightarrow [0,1]$ such that
		\begin{enumerate}
			\item $\psi\equiv 1$ on $B_{r}$;
			\item $\psi\equiv 0$ on $M\setminus B_{2r}$; 
			\item $|\nabla\psi|\leq\frac{C_{0}}{r}\psi^{\frac{1}{\zeta}}$, for a constant $C_{0}=C_{0}(\zeta)>0$.
		\end{enumerate}
		Fix constants $\alpha>1$, $\varepsilon>0$ and a $C^{1}$ non-decreasing function $\lambda:\mathbb{R}\longrightarrow [0,\infty)$ such that $\lambda(t)=0$ for $t\leq\gamma$, and test the equation \eqref{lap_estimate} against the function
		$$
			\psi^{2(\alpha+\sigma-1)}\lambda(v)v^{\alpha-1}\eta_{\varepsilon}(u),
		$$
		where
		$$
			\eta_{\varepsilon}(u)=\left\{
				\begin{array}{l@{\;}c@{\;}l}
					0 & {\rm if} & u\leq\varepsilon \\[0.2cm]
					\frac{u-\varepsilon}{\varepsilon}, & {\rm if} & \varepsilon\leq u\leq 2\varepsilon \\[0.2cm]
					1, & {\rm if} & u\geq 2\varepsilon.
				\end{array}
			\right.
		$$
		Observe that 
		\begin{eqnarray*}
			& &\int\psi^{2(\alpha+\sigma-1)}\lambda(v)v^{\alpha+1}\langle\nabla u,\nabla\eta_{\varepsilon}(u)\rangle \di x_{f}\\
			&=&\int_{\left\{\varepsilon<u<2\varepsilon\right\}}\psi^{2(\alpha+\sigma-1)}\lambda(v)v^{\alpha+1}\frac{|\nabla u|^{2}}{\varepsilon}\di x_{f} \to 0\quad{\rm as} \quad\varepsilon\to 0
		\end{eqnarray*}
		which accounts for the fact that the measure density $e^{-f}=u$ vanishes on $\partial M$. Therefore, by letting $\varepsilon\to 0$ and using $\lambda'\geq 0$ we obtain
		\begin{eqnarray*}
			\displaystyle\int \psi^{2(\alpha+\sigma-1)}\lambda(v)bv^{\alpha+\sigma-1}\di x_{f}&\leq& -(\alpha-1)\displaystyle\int\psi^{2(\alpha+\sigma-1)}\lambda(v)v^{\alpha-2}|\nabla v|^{2}\di x_{f}\\
			& &+2(\alpha+\sigma-1)\displaystyle\int\psi^{2(\alpha+\sigma-1)-1}\lambda(v)v^{\alpha-1}|\nabla\psi||\nabla v|\di x_{f}. 
		\end{eqnarray*}
		By Young's inequality, we can estimate
		\begin{eqnarray*}
			& &2(\alpha+\sigma-1)\displaystyle\int\psi^{2(\alpha+\sigma-1)-1}\lambda(v)v^{\alpha-1}|\nabla\psi||\nabla v|\di x_{f} \\
			&=&\int\left[\psi^{\alpha+\sigma-1}\lambda(v)^{\frac{1}{2}}v^{\frac{\alpha-2}{2}}|\nabla u|\right]\left[2\psi^{\alpha}\lambda(v)^{\frac{1}{2}}v^{\frac{\alpha}{2}}|\nabla\psi|(\alpha+1)\right]\di x_{f}\\
			&\leq&(\alpha-1)\displaystyle\int\psi^{2(2\alpha+\sigma-1)}\lambda(v)v^{\alpha-2}|\nabla v|^{2}\di x_{f}+\frac{(\alpha+\sigma-1)^{2}}{\alpha-1}\displaystyle\int\psi^{2(\alpha+\sigma-1)-2}\lambda(v)v^{\alpha}|\nabla\psi|^{2}\di x_{f}.
		\end{eqnarray*}
		Thus,
		\begin{eqnarray}
			\displaystyle\int \psi^{2(\alpha+\sigma-1)}\lambda(v)bv^{\alpha+\sigma-1}\di x_{f}&\leq&\displaystyle\int\psi^{2(\alpha+\sigma-1)}\lambda(v)v^{\alpha}\di x_{f}\nonumber\\
			&&+\frac{(\alpha+\sigma-1)^2}{\alpha-1}\displaystyle\int\psi^{2(\alpha+\sigma-1)-2(1-\frac{1}{\zeta})}\lambda(v)v^{\alpha}(\psi^{-\frac{1}{\zeta}}|\nabla\psi|)^{2}\di x_{f}.\label{estimate1}
		\end{eqnarray}
		We use Hölder inequality (with coeficients $p,q$ to be chosen) in order to estimate the last integral of the (RHS) that we named $(I)$. In fact,
		\begin{eqnarray*}
			(I)&=&\displaystyle\int\left\{\left[\psi^{2(\alpha+\sigma-1)}\lambda(v)b\right]^{\frac{1}{p}}v^{\alpha}\right\}\left\{\left[\psi^{2(\alpha+\sigma-1)}\lambda(v)b\right]^{\frac{1}{q}}b^{-1}\psi^{-2(1-\frac{1}{\zeta})}\right\}\left\{\psi^{-\frac{1}{\zeta}}|\nabla\psi|\right\}^{2}\di x_{f}\\
			&\leq&\frac{C_{0}^{2}}{r^{2}}\left[\displaystyle\int\psi^{2(\alpha+\sigma-1)}\lambda(v)bv^{\alpha p}\di x_{f}\right]^{\frac{1}{p}}\left[\displaystyle\int \psi^{2(\alpha+\sigma-1)+2q(\frac{1}{\zeta}-1)}\lambda(v)b^{1-q}\di x_{f}\right]^{\frac{1}{q}}.
		\end{eqnarray*}
		Choosing
		$$
			p=\frac{\alpha+\sigma-1}{\alpha}\quad \mbox{and}\quad q=\frac{\alpha+\sigma-1}{\sigma-1},
		$$
		we have
		$$
			2(\alpha+\sigma-1)+2q\left(\frac{1}{\zeta}-1\right)=(\alpha+\sigma-1)\left[2+\frac{2}{\sigma-1}\left(\frac{1}{\zeta}-1\right)\right]>0,
		$$
		by our choice of $\zeta$. Since $\psi\leq 1$ and $\psi\equiv 0$ off $B_{2r}$, we conclude
		\begin{eqnarray*}
			(I)&\leq&\frac{C_{0}^{2}}{r^{2}}\left[\displaystyle\int\psi^{2(\alpha+\sigma-1)}\lambda(v)bv^{\alpha+\sigma-1}\di x_{f}\right]^{\frac{\alpha}{\alpha+\sigma-1}}\left[\displaystyle\int_{B_{2r}}\lambda(v)b^{-\frac{\alpha}{\sigma-1}}\di x_{f}\right]^{\frac{\sigma-1}{\alpha+\sigma-1}}\\
			&=&\frac{C_{0}^2}{r^{2}b}\left[\displaystyle\int\psi^{2(\alpha+1)}\lambda(v)bv^{\alpha+1}\di x_{f}\right]^{\frac{\alpha}{\alpha+\sigma-1}}\left[\displaystyle\int_{B_{2r}}\lambda(v)b\di x_{f}\right]^{\frac{\sigma-1}{\alpha+\sigma-1}}.    
		\end{eqnarray*}
		Then, replacing in \eqref{estimate1}, we have
		\begin{eqnarray*}
			\left[\displaystyle\int\psi^{2(\alpha+\sigma-1)}\lambda(v)bv^{\alpha+\sigma-1}\di x_{f}\right]^{\frac{\sigma-1}{\alpha+\sigma-1}}\leq \frac{C_{0}^2}{r^{2}b}\frac{(\alpha+\sigma-1)^2}{\alpha-1} \left[\displaystyle\int_{B_{2r}}\lambda(v)b\di x_{f}\right]^{\frac{\sigma-1}{\alpha+\sigma-1}}   
		\end{eqnarray*}
		To conclude we just estimate the (LHS) from below since $\psi\equiv 1$ on $B_{r}$ and $\lambda(v)=0$ if $v\leq \gamma$:
		$$
			\displaystyle\int\psi^{2(\alpha+\sigma-1)}\lambda(v)bv^{\alpha+\sigma-1}\di x_{f}\geq \displaystyle\int_{B_{r}\cap\Omega_{\gamma}}\psi^{2(\alpha+\sigma-1)}\lambda(v)bv^{\alpha+\sigma-1}\di x_{f}\geq \gamma^{\alpha+\sigma-1}\int_{B_{r}\cap \Omega_{\gamma}}b\lambda(v)\di x_{f}.
		$$
		Then
		$$
			\int_{B_{r}(\mathcal{O})}\lambda(v)\di x_{f}\leq \left[\frac{1}{\gamma}\frac{C}{r^{2}}\frac{1}{b}\frac{(\alpha+\sigma-1)^{2}}{\alpha-1}\right]^{\alpha+\sigma-1}\int_{B_{2r}(\mathcal{O})}\lambda(v)\di x_{f}.
		$$
		The thesis follows by choosing an increasing sequence of functions $\lambda_{j}$ pointwise converging to the characteristic function of $\left\{t>\gamma\right\}$.
	\end{proof}
	We are ready to prove that 
	$$
		\sup v<\infty.
	$$
	Assume by contradiction that this is not the case, so $\Omega_{\gamma}\neq\emptyset$ for each $\gamma>0$. Fix $\gamma$, define
	$$
		G(r)= \vol_{f}(\Omega_{\gamma}\cap B_{r}(\mathcal{O})),
	$$
	and choose $R>0$ and $\alpha$ so that 
	$$
		G(r)>0, \quad \alpha+\sigma-1=\frac{b\gamma r^{2}}{8C}\quad \forall r>R.
	$$
	Then by Claim 1,
	$$
		G(r)\leq \left[\frac{1}{\gamma}\frac{C}{r^{2}}\frac{1}{b}\frac{(\alpha+\sigma-1)^{2}}{\alpha-1}\right]^{\alpha+\sigma-1}G(2r)\leq \left[\frac{4C}{b\gamma r^{2}}(\alpha+\sigma-1)\right]^{\alpha+\sigma-1}G(2r)=\left(\frac{1}{2}\right)^{\frac{b\gamma}{8 C}r^2}G(2r),
	$$
	for $r>R$. By iterating (see \cite[Lemma 4.7]{rigoli2}), we can conclude
	$$
		\displaystyle\liminf_{r\to\infty}\frac{1}{r^2}\ln{\vol_{f}(B_{r}(\mathcal{O}))}\geq S \gamma \ln{2},
	$$
	where $S$ is a positive constant independent of $\gamma$. This contradicts the assumption if $\gamma$ is large enough.
\end{proof}

The second proposition we need is the following weak maximum principle at infinity:

\begin{proposition}\label{prop_Fsup} 
	Let $(M^{m},g,u)$ be a complete manifold with $\partial M=u^{-1}(0)$ and $u>0$ in $\mathring{M}$. Let $v\in {\rm Lip}_{\rm{loc}}(M)$  solve
	\begin{equation}\label{lap_estimate1}
		\Delta_{f}v\geq F(v)\qquad\mbox{on}\quad\Mo   
	\end{equation}
	for some $F\in C(\mathbb{R})$, and where $f=-\ln{u}$. If 
	\begin{equation}\label{hip_volume}
		\displaystyle\liminf_{r\to\infty}\frac{\ln{\vol_{f}(B_{r}(\mathcal{O}))}}{r^2}<\infty, 
	\end{equation}
	for some origin $\mathcal{O}\subset M$ and 
	$$
		\sup_{M}v<\infty,
	$$
	then
	$$
		F(\sup_{M} v)\leq 0
	$$
\end{proposition}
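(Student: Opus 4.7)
The plan is to argue by contradiction, reducing the statement to Proposition~\ref{vol_control1} through a nonlinear change of unknown. Suppose $v^{\ast} := \sup_{M} v < \infty$ yet $F(v^{\ast}) > 0$. By continuity of $F$ at $v^{\ast}$, there exist $\gamma < v^{\ast}$ and $\eta > 0$ such that $F(t) \ge \eta$ for all $t \in [\gamma, v^{\ast}]$; consequently
\[
	\Delta_{f} v \;\ge\; F(v) \;\ge\; \eta \;>\; 0 \qquad \text{on } \Omega_{\gamma} \cap \mathring{M}, \qquad \Omega_{\gamma} := \{v > \gamma\}.
\]
The strong maximum principle for the (uniformly elliptic) operator $\Delta_{f}$ on $\mathring{M}$ rules out $v$ attaining its supremum at any interior point, so $v < v^{\ast}$ throughout $\mathring{M}$. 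On the other hand, density of $\mathring{M}$ in $M$ together with continuity of $v$ gives $\sup_{\mathring{M}} v = v^{\ast}$, so $v^{\ast} - v$ is strictly positive but not bounded away from zero on $\mathring{M}$.

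Next, introduce the auxiliary function
\[
	w := \frac{1}{v^{\ast} - v} \;\in\; \mathrm{Lip}_{\mathrm{loc}}(\mathring{M}),
\]
which is therefore unbounded on $\mathring{M}$. A direct computation yields
\[
	\Delta_{f} w \;=\; w^{2} \Delta_{f} v + 2 w^{3} |\nabla v|^{2} \;\ge\; \eta\, w^{2} \qquad \text{on } \Omega_{\gamma} \cap \mathring{M},
\]
and $\Omega_{\gamma} = \{w > \gamma^{\ast}\}$ with $\gamma^{\ast} := (v^{\ast} - \gamma)^{-1} > 0$. Applying Proposition~\ref{vol_control1} to $w$ with $b = \eta$ and $\sigma = 2$ then yields $\sup_{M} w < \infty$, contradicting $\sup_{\mathring{M}} w = +\infty$ and concluding the proof.

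The main technical obstacle to implementing this reduction literally is that $w$ belongs only to $\mathrm{Lip}_{\mathrm{loc}}(\mathring{M})$, and may blow up along $\partial M$ at boundary points where $v$ approaches $v^{\ast}$. However, the proof of Proposition~\ref{vol_control1} is already designed to accommodate precisely this situation: every test function carries the regulariser $\eta_{\varepsilon}(u)$, whose support lies in $\{u \ge \varepsilon\} \Subset \mathring{M}$, so all integrations by parts and all boundary contributions take place strictly inside $\mathring{M}$. Combining that regulariser with an additional cut-off $\lambda_{k}(w)$ supported in $\{\gamma^{\ast} < w < k\}$, one derives the analogue of~\eqref{bound_eq} for $w$ by first letting $\varepsilon \to 0$ (the boundary term vanishes thanks to $d\mu_{f} = u\,dx$ vanishing on $\partial M$, exactly as in Proposition~\ref{vol_control1}) and then $k \to \infty$. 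The iteration step of Proposition~\ref{vol_control1} goes through unchanged and produces $\liminf_{r\to\infty} r^{-2} \ln \vol_{f}(B_{r}(\mathcal{O})) = +\infty$, contradicting~\eqref{hip_volume}. The key work is therefore just this double-cutoff verification; all the nonlinear content is already captured by the substitution $w = (v^{\ast}-v)^{-1}$.
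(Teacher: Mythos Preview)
Your reduction to Proposition~\ref{vol_control1} via the substitution $w = (v^\ast - v)^{-1}$ is a genuinely different route from the paper's, which simply invokes \cite[Theorem~4.1]{rigoli2} verbatim after noting that the weight $e^{-f}=u$ annihilates all boundary contributions. The substitution idea is sound, but your handling of the possible blow-up of $w$ on $\partial M$ contains a real gap.

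The extra cut-off $\lambda_k(w)$ supported in $\{\gamma^\ast < w < k\}$ is \emph{not} monotone: near $w=k$ its derivative is negative, and integration by parts against $\Delta_f w$ produces the term
\[
-\int \lambda_k'(w)\,|\nabla w|^2\,\psi^{2(\alpha+1)}\,w^{\alpha-1}\,\eta_\varepsilon(u)\,\di x_f,
\]
which on $\{\lambda_k'<0\}$ is \emph{non-negative} and sits on the wrong side of the inequality leading to~\eqref{bound_eq}. It cannot simply be dropped, nor is it absorbed by the good gradient term $-(\alpha-1)\int\ldots|\nabla w|^2$, since the latter carries the factor $\lambda_k$ which may vanish where $\lambda_k'<0$. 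Your sentence ``one derives the analogue of~\eqref{bound_eq}'' passes over exactly this point. A clean repair is to work instead with $w_\delta = (v^\ast+\delta - v)^{-1}$ for $\delta>0$: then $w_\delta\in\mathrm{Lip}_{\mathrm{loc}}(M)$, the hypothesis of Proposition~\ref{vol_control1} (really, of Claim~1 in its proof) is met with threshold $\gamma_\delta^\ast = (v^\ast+\delta-\gamma)^{-1}$, and the iteration yields
\[
\liminf_{r\to\infty}\frac{\ln\vol_f(B_r(\mathcal O))}{r^2}\ \ge\ S\,\gamma_\delta^\ast
\]
with $S>0$ independent of $\gamma,\delta$. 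Letting $\gamma\uparrow v^\ast$ and $\delta\downarrow 0$ forces the left side to $+\infty$, contradicting~\eqref{hip_volume}. Note that this also dispenses with the appeal to the strong maximum principle, and that you must enter the \emph{proof} of Proposition~\ref{vol_control1} here: applying only its \emph{statement} to the bounded function $w_\delta$ yields nothing.
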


Proposition \ref{prop_Fsup} is a version, for weighted operators and manifolds with boundary, of \cite[Theorem 4.1]{rigoli2}. The proof follows verbatim the one therein once we observe the following:
\begin{itemize}
	\item as in Proposition \ref{vol_control1}, the introduction of the weight $e^{-f}$ in measures and operators does not make any difference, and since $e^{-f}=u=0$ on $\partial M$, boundary terms do not appear;
	\item the regularity $v\in C^{1}(M)$ required in the theorem can be weakened to $v\in {\rm Lip}_{\rm{loc}}(M)$ by using the weak formulation of \eqref{lap_estimate1}.
\end{itemize}
Putting together Propositions \ref{vol_control1} and \ref{prop_Fsup} we obtain the next result.

\begin{theorem}\label{liouville_type}
	Let $(M^{m},g,u)$ be a complete manifold with $\partial M=u^{-1}(0)$ and $u>0$ in $\mathring{M}$. Let $v\in {\rm Lip}_{\rm{loc}}(M)$ solve
	\begin{equation}\label{lap_estimate}
		\Delta_{f}v\geq bv^{\sigma}-av\qquad\mbox{on}\quad\Mo   
	\end{equation}
	for constants $b>0$, $a\geq 0$ and $\sigma>1$, where $f=-\ln{u}$. If 
	\begin{equation}\label{hip_volume}
		\displaystyle\liminf_{r\to\infty}\frac{\ln{\vol_{f}(B_{r}(\mathcal{O}))}}{r^2}<\infty, 
	\end{equation}
	for some origin $\mathcal{O}\subset M$, then
	$$
		\displaystyle\sup_{M} v\leq \left(\frac{a}{b}\right)^{\frac{1}{\sigma-1}}.
	$$
\end{theorem}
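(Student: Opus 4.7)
The plan is to combine Propositions \ref{vol_control1} and \ref{prop_Fsup} in two stages: first establish that $\sup_{M} v < \infty$ via Proposition \ref{vol_control1}, then extract the sharp bound via Proposition \ref{prop_Fsup}.

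First, I would localize the differential inequality to a superlevel set where the linear term $-av$ can be absorbed into the power term. Set $\gamma_{0} \doteq (2a/b)^{1/(\sigma-1)}$. If the superlevel set $\Omega_{\gamma_{0}} = \{v > \gamma_{0}\}$ is empty, then $\sup_{M} v \leq \gamma_{0} < \infty$ is automatic. Otherwise, on $\Omega_{\gamma_{0}}$ we have $b v^{\sigma-1} \geq b \gamma_{0}^{\sigma-1} = 2a$, hence
$$
b v^{\sigma} - a v = b v^{\sigma}\left( 1 - \frac{a}{b v^{\sigma-1}}\right) \geq \frac{b}{2} v^{\sigma} \qquad \text{on } \Omega_{\gamma_{0}}.
$$
Combined with \eqref{lap_estimate}, this shows that $v$ satisfies the assumptions of Proposition \ref{vol_control1} on $\Omega_{\gamma_{0}}$ with the constant $b/2$ in place of $b$. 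Since the volume-growth hypothesis \eqref{hip_volume} is unchanged, that proposition yields $\sup_{M} v < \infty$.

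Second, with boundedness of $v$ in hand, I would apply Proposition \ref{prop_Fsup} to the continuous function $F(t) \doteq b t^{\sigma} - a t$. The differential inequality \eqref{lap_estimate} is precisely of the form $\Delta_{f} v \geq F(v)$ on $\mathring{M}$, and the volume-growth assumption and boundedness from above have already been verified, so Proposition \ref{prop_Fsup} gives $F(\sup_{M} v) \leq 0$, i.e.
$$
b (\sup_{M} v)^{\sigma} \leq a \sup_{M} v.
$$
If $\sup_{M} v \leq 0$ the claimed inequality is trivial, because $a/b \geq 0$; otherwise, dividing by $\sup_{M} v > 0$ yields $(\sup_{M} v)^{\sigma-1} \leq a/b$, which is the desired bound.

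Because both Propositions \ref{vol_control1} and \ref{prop_Fsup} are already available, there is no real obstacle here; the only subtle point is the absorption step in the first stage, which requires restricting to a high superlevel set to decouple the $b v^{\sigma}$ and $a v$ contributions and reduce to the pure-power inequality needed to invoke Proposition \ref{vol_control1}.
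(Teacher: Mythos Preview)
Your proof is correct and follows essentially the same two-step strategy as the paper: absorb the linear term into the power term on a superlevel set to invoke Proposition~\ref{vol_control1}, then apply Proposition~\ref{prop_Fsup} with $F(t)=bt^{\sigma}-at$ to extract the sharp bound. The only cosmetic difference is that the paper argues by contradiction and picks $\gamma\in\big((a/b)^{1/(\sigma-1)},\sup_M v\big)$, whereas you fix $\gamma_0=(2a/b)^{1/(\sigma-1)}$ in advance; just note that when $a=0$ this gives $\gamma_0=0$, so in that case you should take any positive level (the inequality $\Delta_f v\ge bv^{\sigma}$ already holds on all of $\mathring{M}$) to meet the hypothesis $\gamma>0$ of Proposition~\ref{vol_control1}.
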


\begin{proof}
	We first prove that $\sup_M u < \infty$. By contradiction, assume the contrary and pick  
	$$
		\gamma > \left(\frac{a}{b}\right)^{\frac{1}{\sigma-1}}.
	$$
	Then, there exists a constant $C_{\gamma}>0$ such that
	$$
		\Delta_{f}v\geq C_{\gamma}v^{\sigma}\qquad\mbox{on}\quad \Omega_{\gamma},   
	$$
	whence $\sup_M v<\infty$ by Proposition \ref{vol_control1},  contradiction. Using Proposition \ref{prop_Fsup} with $F(t)=bt^{\sigma}-at$ we conclude the thesis.
\end{proof}

\begin{remark}
	We stress that in the above result $\partial M$ may be non-compact, and no assumption on $v_{|_{\partial M}}$ is made besides its local Lipschitz coninuity.
\end{remark}
We eventually prove Theorem \ref{vanishing_map}. Since $v=|\di \varphi|^{2}$ satisfies \eqref{eq_dphi}, which in our assumptions imply
$$\Delta_{f}v\geq -av+\frac{2}{m}(1-(m-1)\kappa)v^{2},$$
the result is a direct application of Theorem \ref{liouville_type} once we prove \eqref{hip_volume}. This folows by standard comparison theory. Consider a smooth relatively compact open set $\mathcal{O}$ containing $\partial M$. By \eqref{map_source} and since $V(\varphi)$ is bounded from below, there exists a constant $\bar{\kappa}>0$ such that 
$$\Ric_{f}^{m+1}\geq -\bar{\kappa}^{2}m g.$$
Fix a point $x\in M\setminus\mathcal{O}$, which is not in the cut locus ${\rm cut}(\mathcal{O})$, and  a unit speed minimizing geodesic $\gamma:[0,r(x)]\to M$ from $\partial\mathcal{O}$ to $x$. We have the following weighted Bochner formula for the $f$-laplacian of $r$ (see \cite[Lemma 2.2]{mrs})
\[
\frac{1}{m} (\Delta_{f} r)^2 + \langle \nabla \Delta_{f} r, \nabla r \rangle + \Ric_{f}^{m+1} (\nabla r, \nabla r) \leq 0,
\]
which implies
$$z'+\frac{z^{2}}{m}-m\bar{\kappa}^{2}\leq 0,$$
where $z(t)=\Delta_{f}r(\gamma(t))$. Furthermore 
$$z(0)\leq\bar{\lambda}=\max_{\partial\mathcal{O}}H_{f},$$
where $H_{f}$ is the weighted mean curvature of $\partial\mathcal{O}$ in the direction pointing towards $\mathcal{O}$. The classical Riccati comparison (see \cite[Theorem 3.8]{bmpr} for a version of this inequality using the distance from a compact set, as described above) gives 
$$\Delta_{f}r\leq m\frac{h'(r)}{h(r)}\qquad {\rm at }\, x,$$
where $h$ is a solution of
\begin{equation}
\left\{ \begin{array}{ll}
h''-\bar{\kappa}^{2}h= 0 & \text{on}\quad \R^+, \\[0.2cm]
h(0)=1, & h'(0)=\bar{\lambda}. \end{array}
\right.
\end{equation}
The inequality also hold in the weak sense, see (see \cite[Lemma 2.5]{rigoli3}). 
By integration and since $h=\cosh (\bar{\kappa}t)+ \frac{\bar{\lambda}}{\bar{\kappa}}\sinh(\bar{\kappa}t)$ we obtain the volume inequality (see \cite[Theorem 2.14]{rigoli3})
\[
\vol_{f}(B_{r}(\mathcal{O}) \setminus B_{r_{0}}(\mathcal{O})) \leq C \left[\int_{r_{0}}^{r} h^{m-1}(t) \, dt\right]\leq C_{1}e^{C_{2}r}
\]
for sonstants $C_1,C_2>0$. The desired inequality \eqref{hip_volume} follows at once.

\vspace{0.3cm}

\noindent \textbf{Acknowledgements.} A.F. would like to thank the hospitality of the Mathematics Department of Università degli Studi di Torino, where part of this work was carried out. G.C. acknowledges that part of this work was carried out while he was employed at the Mathematics Department of Università degli Studi di Napoli Federico II\@. A.F. was partially supported by Conselho Nacional de Desenvolvimento Científico e Tecnológico (CNPq) of the Ministry of Science, Technology and Innovation of Brazil, Grants 316080/2021-7 and 200261/2022-3. This work also was funded by Paraíba State Research Foundation - Programa Primeiros Projetos (Grant 2021/3175) and by Program Research in Pairs CIMPA-ICTP 2024. L.M. and M.R. were supported by the PRIN2022 project no. 20225J97H5 ``Differential-geometric aspects of manifolds via Global Analysis''.

The authors are thankful to the anonymous referee for their thorough review, that led to substantial improvement of the manuscript.

\bigskip

\bibliographystyle{plain}

\end{document}